\newtheorem{theorem}{Theorem}[section]
\newtheorem{corollary}{Corollary}[section]
\newtheorem{definition}{Definition}[section]
\newtheorem{lemma}{Lemma}[section]
\newenvironment{proof}[1][Proof]{\noindent\textbf{#1.} }{\ \rule{0.5em}{0.5em}}
\begin{document}

\begin{center}
\textbf{FUZZIFICATION OF STRONGLY COMPACT AND LOCALLY STRONGLY COMPACT SPACES}%
\end{center}

\begin{center} O. R. Sayed\\ Department of Mathematics, Faculty of Science, Assiut University,
Assiut 71516, EGYPT \\ o\_r\_sayed@yahoo.com
\end{center}

\begin{center} Adem K{\i}l{\i}\c{c}man \\ Department of Mathematics,
University Putra Malaysia, 43400 UPM, Serdang, Selangor, MALAYSIA \\ akilic@upm.edu.my \end{center}

\footnotetext{\textit{Keywords and Phrases}: {\L }ukasiewicz logic;
semantics; fuzzifying topology; fuzzifying compactness; strong compactness;
fuzzifying locally compactness; locally strong compactness.
\par
\textit{2000 Mathematics Subject Classification}: 54A40, 54B10, 54D30.}

\begin{abstract} In this paper, we study some characterizations of fuzzifying
strong compactness including nets
and pre-subbases properties. We also introduve new characterizations of locally strong compactness
in fuzzifying topology and mappings.
\end{abstract}

\section{\textbf{Introduction and Preliminaries}}

In the last few years fuzzy topology has been developed and studied by many researchers, see [7-9,
12-13, 22]. In contrast to the classical topology, fuzzy topology is endowed
with richer structure to a certain extent and generalize certain classical concepts. There are also different kind of definitions in the related revelopment for example see [11], [8], Chang [4] and Goguen [5] by using the lattices and known as \textit{L}-topological spaces. On the other hand, H$%
\ddot{o}$hle in [6] proposed a terminology of \textit{L}-fuzzy topology  and
\textit{L}-valued mapping on powerset $P(X)$, see the related works in [10, 12-13,18]. \\

In 1952, Rosser and Turquette [19] proposed an open problem: If there are many-valued theories beyond the level of predicates calculus, then what are the detail of such theories? An partial answer to this problem was given by Ying in
[23-25] by using a semantic of continuous-valued logic to
develop systematic fuzzifying topology. Roughly speaking,
the semantical analysis approach transforms the formal statements of implication formulas into logical language. In the literature there are some related references such as [1-3, 16--17, 20-21].
In particular, Ying [26] introduced the concepts of compactness and
established a generalization of Tychonoff's theorem in fuzzifying topology. Further, in [21] the concept of local compactness in fuzzifying
topology was introduced and some of properties were also established.
Despite the above development, one of the well known progress was the pre-open sets concept in [14] which were introduced by
Mashhour at el. Later in [2] the concepts of
fuzzifying pre-open sets and pre-continuity were introduced and
studied. In [3], fuzzifying pre-separation axioms and relation among these axioms were studied.

In the present study, we will use the following implication $$
[P]\leq \left[ \varphi \rightarrow \psi \right] \Leftrightarrow \ \ \left[ P
\right] \otimes \left[ \varphi \right] \leq \left[ \psi \right] .$$ 

We now give some useful definitions and results which will be used in the rest of the
present work. The family of all fuzzifying pre-open sets [2], denoted by $%
\tau _{P}\in \Im (P(X))$, and follows:
$$A\in \tau _{P}:=\forall x(x\in A\rightarrow x\in Int(Cl(A)))\ \ , {\rm i. e.}, \ \ \tau
_{P}(A)=\underset{x\in A}{\bigwedge }Int(Cl(A))(x)).$$
Similarly, the family of all fuzzifying pre-closed sets [2], denoted by $\digamma
_{P}\in \Im (P(X))$, and defined as $A\in \digamma _{P}:=X-A\in \tau _{P}$.
The fuzzifying pre-neighborhood of a point $x\in X$ [2] is denoted by
$N_{x}^{P^{X}}($or $N_{x}^{P})\in \Im (P(X))$ and defined as $N_{x}^{P}(A)=%
\underset{x\in B\subseteq A}{\bigvee }\;\tau _{P}(B).$ The fuzzifying
pre-closure of a set $A\subseteq X$ [2], denoted by $Cl_{P}\in \Im (X)$, is
defined as $Cl_{P}(A)(x)=1-N_{x}^{P}(X-A).$ \\

Note, if $(X,\tau )$ is a fuzzifying topological space and $N(X)$ is the class of all nets in $X$, then the
binary fuzzy predicates $\vartriangleright ^{P},\propto ^{P}\in \Im
(N(X)\times X)$  are defined as $$S\vartriangleright ^{P}x:=\forall
A(A\in N_{x}^{P^{X}}\rightarrow S\widetilde{\subset }A),\ \ \ S\propto
^{P}x:=\forall A(A\in N_{x}^{P^{X}}\rightarrow S\widetilde{\sqsubset }A),$$
where "$S\vartriangleright ^{P}x"$ , "$S\propto ^{P}x"$ stand for "$S$
pre-converges to $x$" , "$x$ is a pre-accumulation point of $S$",
respectively; and "$\widetilde{\subset }$", "$\widetilde{\sqsubset }$" are
the binary crisp predicates "almost in ","often in", respectively, see [20]. The
degree to which $x$ is a pre-adherence point of $S$ is $adh_{P}S(x)=[S%
\propto ^{P}x]$. If $(X,\tau )$ and $(Y,\sigma )$ are two fuzzifying
topological spaces and $f\in Y^{X}$, the unary fuzzy predicates $%
C_{P},I_{P}\in \Im (Y^{X}),$ called fuzzifying pre-continuity [2],
fuzzifying pre-irresoluteness [1], are given as $$C_{P}(f):=\forall B(B\in
\sigma \rightarrow f^{-1}(B)\in \tau _{P}),\ \ I_{P}(f):=\forall B(B\in
\sigma _{P}\rightarrow f^{-1}(B)\in \tau _{P}),$$ respectively. Let $\Omega $
be the class of all fuzzifying topological spaces. A unary fuzzy predicate $%
T_{2}^{P}\in \Im (\Omega )$, called fuzzifying pre-Hausdorffness [3], is
given as follows:\newline
$$T_{2}^{P}(X,\tau )=\forall x\forall y((x\in X\wedge y\in X\wedge x\neq
y)\rightarrow \exists B\exists C(B\in N_{x}^{P}\wedge C\in N_{y}^{P}\wedge
B\cap C\equiv \phi )).$$
A unary fuzzy predicate $\Gamma \in \Im (\Omega )$, called fuzzifying
compactness [26], is given as follows: $$\Gamma (X,\tau ):=(\forall \Re
)(K_{\circ }(~\Re ,X)\longrightarrow (\exists \wp )((\wp \leq \Re )\wedge
K(~\wp ,A)\otimes FF(\wp )))$$ and if $A\subseteq X,$ then $\Gamma
(A):=\Gamma (A,\tau /A).$ For $K$, $K_{\circ }$ (resp. $\leq $ and $FF$) see
[24, Definition 4.4] (resp. [24, Theorem 4.3] and [26, Definition 1.1 and
Lemma 1.1]). A unary fuzzy predicate $fI\in \Im (\Im (P(X))),$ called fuzzy
finite intersection property [26], is given as $$fI(\Re ):=\forall \wp ((\wp
\leq \Re )\wedge FF(\wp )\rightarrow \exists x\forall B(B\in \wp \rightarrow
x\in B)).$$ A fuzzifying topological space $(X,\tau )$ is said to be
fuzzifying $P$-topological space [26] if $\ \tau _{P}(A\cap B)\geq \tau
_{P}(A)\wedge \tau _{P}(B)$. A binary fuzzy predicate $K_{P}\in \Im (\Im
(P(X))\times P(X))$, called fuzzifying\ pre-open covering [1], is given as $%
K_{P}($~$\Re ,A):=K($~$\Re ,A)\otimes (\Re \subseteq \tau _{P})$. A unary
fuzzy predicate $\Gamma _{P}\in \Im (\Omega )$, called fuzzifying strong
compactness [1], is given as follows: $$(X,\tau )\in \Gamma _{P}:=(\forall
\Re )(K_{P}(~\Re ,X)\longrightarrow (\exists \wp )((\wp \leq \Re )\wedge
K( ~\wp ,X)\otimes FF(\wp )))$$ and if $A\subseteq X$, then $\Gamma
_{P}(A):=\Gamma _{P}(A,\tau /A)$. It is obvious that $$\Gamma _{P}(X,\tau
):=\Gamma (X,\tau _{P})\  , \ \Gamma _{P}(A,\tau /A):=\Gamma (A,\tau _{P}/A)$$
and $$ \vDash K_{\circ }(~\Re ,A)\longrightarrow K_{P}(~\Re ,A).$$
A unary fuzzy predicate $LC\in \Im (\Omega )$, called fuzzifying locally
compactness [21], is given as follows: $$(X,\tau )\in LC:=(\forall x)(\exists
B)((x\in Int(B)\otimes \Gamma (B,\tau /B)).$$

\section{Fuzzifying pre-base and pre-subbase}

\begin{definition}
Let $(X,\tau )$ be a fuzzifying topological space and $\beta _{P}\subseteq
\tau _{P}.$ Then $\beta _{P}$ is called a pre-base of $\tau _{P}$ if $\beta
_{P}$ fulfils the condition: $\vDash A\in N_{x}^{P^{X}}\rightarrow \exists
B((B\in \beta _{P})\wedge (x\in B\subseteq A)).$
\end{definition}

\begin{theorem}
$\beta _{P}$ is a pre-base of $\tau _{P}$ if and only if $\tau _{P}=\beta
_{P}^{(\cup )},$ where $$\beta _{P}^{(\cup )}(A)=\underset{\underset{\lambda
\in \Lambda }{\bigcup }B_{\lambda }=A}{\bigvee }\ \underset{\lambda \in
\Lambda }{\bigwedge }\ \beta _{P}(B_{\lambda }).$$
\end{theorem}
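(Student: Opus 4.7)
The plan is to prove the two directions separately, treating this as a fuzzifying analogue of the classical statement that $\beta$ is a base iff every open set is a union of base elements. For both directions, the identity $\tau_P(A) = \bigwedge_{x\in A} N_x^P(A)$ (a standard fact about fuzzifying neighborhood systems) together with the fact that arbitrary unions preserve the $\tau_P$-degree, i.e.\ $\tau_P\bigl(\bigcup_\lambda B_\lambda\bigr) \geq \bigwedge_\lambda \tau_P(B_\lambda)$, will be the two main workhorses.

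For the forward direction, suppose $\beta_P$ is a pre-base. To show $\beta_P^{(\cup)}(A)\leq \tau_P(A)$, take any decomposition $A=\bigcup_\lambda B_\lambda$; since $\beta_P\subseteq \tau_P$ (which will follow from the pre-base axiom applied to any $B\in\beta_P$, or alternately is already part of the hypothesis in Definition 2.1), I get $\bigwedge_\lambda \beta_P(B_\lambda)\leq \bigwedge_\lambda \tau_P(B_\lambda)\leq \tau_P(A)$, and taking the supremum over decompositions yields the inequality. For the reverse inequality $\tau_P(A)\leq \beta_P^{(\cup)}(A)$, I would proceed by an $\varepsilon$-selection argument: fix $\varepsilon>0$; for each $x\in A$ we have $\tau_P(A)\leq N_x^P(A)\leq \bigvee_{x\in B\subseteq A}\beta_P(B)$ by the pre-base condition, so choose $B_x\in\beta_P$ with $x\in B_x\subseteq A$ and $\beta_P(B_x)\geq \tau_P(A)-\varepsilon$. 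Because $x\in B_x$ for each $x$, the family $\{B_x:x\in A\}$ covers $A$ and is contained in $A$, so $\bigcup_x B_x = A$. Hence $\beta_P^{(\cup)}(A)\geq \bigwedge_x \beta_P(B_x)\geq \tau_P(A)-\varepsilon$; letting $\varepsilon\downarrow 0$ finishes this direction.

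For the converse direction, assume $\tau_P=\beta_P^{(\cup)}$. Unfolding the definitions gives
\[
N_x^P(A)=\bigvee_{x\in C\subseteq A}\tau_P(C)=\bigvee_{x\in C\subseteq A}\ \bigvee_{\bigcup_\lambda B_\lambda=C}\bigwedge_\lambda \beta_P(B_\lambda).
\]
For any fixed decomposition $C=\bigcup_\lambda B_\lambda$ with $x\in C$, there exists at least one index $\lambda_0$ with $x\in B_{\lambda_0}\subseteq C\subseteq A$, and so $\bigwedge_\lambda \beta_P(B_\lambda)\leq \beta_P(B_{\lambda_0})\leq \bigvee_{x\in B\subseteq A}\beta_P(B)$. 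Taking suprema yields $N_x^P(A)\leq \bigvee_{x\in B\subseteq A}\beta_P(B)$, which is exactly the semantic statement $\vDash A\in N_x^{P^X}\rightarrow \exists B((B\in\beta_P)\wedge (x\in B\subseteq A))$, so $\beta_P$ is a pre-base.

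I expect the main obstacle to be the selection step in the forward direction: making the $\varepsilon$-choice of $B_x$ for every $x\in A$ simultaneously while still controlling $\bigwedge_x \beta_P(B_x)$ uniformly, and verifying that the resulting union is genuinely $A$ (rather than a proper subset). Once the selection is framed as choosing pointwise below the common bound $\tau_P(A)-\varepsilon$, the rest is a direct unfolding of the definitions of $\beta_P^{(\cup)}$ and $N_x^P$.
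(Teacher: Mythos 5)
Your proposal is correct and follows essentially the same route as the paper: the easy inequality via $\beta_P\subseteq\tau_P$ and the union property, and the converse via picking an index $\lambda_0$ with $x\in B_{\lambda_0}$ are identical, while your $\varepsilon$-selection of the sets $B_x$ is just a more explicit rendering of the paper's choice-function/complete-distributivity step ($\bigwedge_{x\in A}\bigvee_{B\in\delta_x}=\bigvee_{f\in\prod_x\delta_x}\bigwedge_{x\in A}$) used to pass from $\bigwedge_{x\in A}N_x^P(A)$ to a single decomposition of $A$. The selection step you flagged as the main obstacle is handled exactly as you describe and poses no difficulty, since each $B_x\subseteq A$ contains $x$, so the union is genuinely $A$.
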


\begin{proof}
Suppose that $\beta _{P}$ is a pre-base of $\tau _{P}.$ If $$\underset{%
\lambda \in \Lambda }{\bigcup }B_{\lambda }=A,$$ then from Theorem 3.1 (1)
(b) in [2], $$\tau _{P}(A)=\tau _{P}\left( \underset{\lambda \in \Lambda }{%
\bigcup }B_{\lambda }\right) \geq \underset{\lambda \in \Lambda }{\bigwedge }%
\tau _{P}(B_{\lambda })\geq \underset{\lambda \in \Lambda }{\bigwedge }\beta
_{P}(B_{\lambda }).$$ Consequently, $$\tau _{P}(A)\geq \underset{\underset{%
\lambda \in \Lambda }{\bigcup }B_{\lambda }=A}{\bigvee }\ \underset{\lambda
\in \Lambda }{\bigwedge }\ \beta _{P}\left( B_{\lambda }\right) .$$ To prove
that $$\tau _{P}(A)\leq \underset{\underset{\lambda \in \Lambda }{\bigcup }%
B_{\lambda }=A}{\bigvee }\ \underset{\lambda \in \Lambda }{\bigwedge }\
\beta _{P}\left( B_{\lambda }\right) ,$$ we first prove $$\tau _{P}(A)=%
\underset{x\in A}{\bigwedge }\ \underset{x\in B\subseteq A}{\bigvee }\tau
_{P}(B).$$ (Indeed, assume $\delta _{x}=\{B:x\in B\subseteq A\}.$ Then for
any $$f\in \underset{x\in A}{\prod }\delta _{x}\ , \ \underset{x\in A}{\bigcup }%
f(x)=A,$$ and furthermore \begin{eqnarray*}  \tau _{P}(A)&=&\tau _{P}\left( \underset{x\in A}{%
\bigcup }f(x)\right) \\ &\geq& \underset{x\in A}{\bigwedge }\tau
_{P}(f(x))\\ &\geq& \underset{f\in \underset{x\in A}{\prod }\delta _{x}}{%
\bigvee }\ \underset{x\in A}{\bigwedge }\tau _{P}(f(x))\\ &=& \underset{x\in A}{%
\bigwedge }\ \underset{x\in B\subseteq A}{\bigvee }\tau _{P}(B).\end{eqnarray*} Also
$$\tau_{P}(A)\leq \underset{x\in A}{\bigwedge }\ \underset{x\in B\subseteq A}{%
\bigvee }\tau _{P}(B).$$ Therefore $$\tau _{P}(A)=\underset{x\in A}{\bigwedge }%
\ \underset{x\in B\subseteq A}{\bigvee }\tau _{P}(B)).$$
Now, since \begin{eqnarray*} N_{x}^{P^{X}}(A)&\leq& \underset{x\in B\subseteq A}{\bigvee }%
\ \ \beta _{P}(B), \\ \tau _{P}(A)&=&\underset{x\in A}{\bigwedge }\ \underset{%
x\in B\subseteq A}{\bigvee } \tau _{P}(B)=\underset{x\in A}{\bigwedge }\
N_{x}^{P^{X}}(A) \\ &\leq& \underset{x\in A}{\bigwedge }\ \underset{x\in
B\subseteq A}{\bigvee }\ \ \beta _{P}(B)=\underset{f\in \underset{x\in A}{%
\prod }\delta _{x}}{\bigvee }\ \underset{x\in A}{\bigwedge }\beta _{P}(f(x)).\end{eqnarray*}
Then $$\tau _{P}(A)\leq \underset{\underset{\lambda \in \Lambda }{\bigcup }%
B_{\lambda }=A}{\bigvee }\ \underset{\lambda \in \Lambda }{\bigwedge }\
\beta _{P}\left( B_{\lambda }\right) .$$ Therefore $$\tau _{P}(A)=\underset{%
\underset{\lambda \in \Lambda }{\bigcup }B_{\lambda }=A}{\bigvee }\ \underset%
{\lambda \in \Lambda }{\bigwedge }\ \beta _{P}\left( B_{\lambda }\right). $$

In the other side, we assume $$\tau _{P}(A)=\underset{\underset{\lambda \in
\Lambda }{\bigcup }B_{\lambda }=A}{\bigvee }\ \underset{\lambda \in \Lambda }%
{\bigwedge }\ \beta _{P}\left( B_{\lambda }\right) $$ and we will show that $%
\beta _{P}$ is a pre-base of $\tau _{P}$, i.e., for any $A\subseteq X,$ $%
N_{x}^{P^{X}}(A)\leq \underset{x\in B\subseteq A}{\bigvee }\ \ \beta
_{P}(B). $ Indeed, if $x\in B\subseteq A,$ $\underset{\lambda \in \Lambda }{%
\bigcup }B_{\lambda }=B,$ then there exists $\lambda _{\circ }\in \Lambda $
such that $x\in B_{\lambda _{\circ }}$ and $$\underset{\lambda \in \Lambda }{%
\bigwedge }\ \beta _{P}\left( B_{\lambda }\right) \leq \beta _{P}\left(
B_{\lambda _{\circ }}\right) \leq \underset{x\in B\subseteq A}{\bigvee }\ \
\beta _{P}(B).$$ Therefore $$N_{x}^{P^{X}}(A)=\underset{x\in B\subseteq A}{%
\bigvee }\ \ \tau _{P}(B)=\underset{x\in B\subseteq A}{\bigvee }\ \underset{%
\underset{\lambda \in \Lambda }{\bigcup }B_{\lambda }=A}{\bigvee }\ \underset%
{\lambda \in \Lambda }{\bigwedge }\ \beta _{P}\left( B_{\lambda }\right)
\leq \underset{x\in B\subseteq A}{\bigvee }\ \ \beta _{P}(B).$$
\end{proof}

\begin{theorem}
Let $\beta _{P}\in \Im (P(X)).$ Then $\beta _{P}$ is a pre-base for some
fuzzifying $P$-topology $\tau _{P}$ if and only if it has the following
properties:\begin{enumerate}
\item[(1)] $\beta _{P}^{(\cup )}(X)=1;$
\item[(2)] $\vDash (A\in \beta _{P})\wedge (B\in \beta _{P})\wedge (x\in A\cap
B)\rightarrow \exists C((C\in \beta _{P})\wedge (x\in C\subseteq A\cap B).$
\end{enumerate}
\end{theorem}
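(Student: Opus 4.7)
The plan is to reduce everything to the preceding theorem, which characterizes pre-bases by the equality $\tau_P = \beta_P^{(\cup)}$. The real work is then only to check that $\beta_P^{(\cup)}$ itself is a fuzzifying $P$-topology exactly when (1) and (2) hold.

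For necessity, I assume $\beta_P$ is a pre-base of some fuzzifying $P$-topology $\tau_P$; the preceding theorem then gives $\tau_P = \beta_P^{(\cup)}$, so (1) is just $\tau_P(X) = 1$. Condition (2), once the {\L}ukasiewicz semantics is unpacked, asks for $\beta_P(A) \wedge \beta_P(B) \leq \bigvee_{x \in C \subseteq A \cap B} \beta_P(C)$ whenever $x \in A \cap B$, and I would obtain this by chaining three inequalities already at hand: $\beta_P \leq \tau_P$ pointwise, the $P$-topology axiom $\tau_P(A \cap B) \geq \tau_P(A) \wedge \tau_P(B)$, and the pre-base inequality $N_x^{P^X}(A \cap B) \leq \bigvee_{x \in C \subseteq A \cap B} \beta_P(C)$, combined with $\tau_P(A \cap B) \leq N_x^{P^X}(A \cap B)$ for $x \in A \cap B$.

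For sufficiency I would \emph{define} $\tau_P := \beta_P^{(\cup)}$ and verify the fuzzifying $P$-topology axioms, after which the preceding theorem automatically certifies $\beta_P$ as a pre-base of it. The axioms $\tau_P(X) = 1$ and $\tau_P(\emptyset) = 1$ fall out immediately from (1) and the empty-cover convention, and the arbitrary-union axiom reduces to concatenating index sets inside the supremum defining $\beta_P^{(\cup)}$.

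The only genuinely nontrivial step, and the sole place (2) is needed, is the finite-intersection axiom $\tau_P(A_1 \cap A_2) \geq \tau_P(A_1) \wedge \tau_P(A_2)$. Given representations $A_i = \bigcup_{\lambda \in \Lambda_i} B_\lambda^i$, I would write $A_1 \cap A_2 = \bigcup_{(\lambda_1, \lambda_2)}(B_{\lambda_1}^1 \cap B_{\lambda_2}^2)$ and apply (2) pointwise on each cell to exhibit $B_{\lambda_1}^1 \cap B_{\lambda_2}^2$ as a union of members of $\beta_P$ whose degrees approximate $\beta_P(B_{\lambda_1}^1) \wedge \beta_P(B_{\lambda_2}^2)$; the already-verified union axiom then lifts the estimate to $\beta_P^{(\cup)}(A_1 \cap A_2)$, and suprema over both representations deliver the bound. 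The main obstacle will be keeping the $\bigvee$--$\bigwedge$ interchanges clean on the product index set, since sloppy quantifier order in the {\L}ukasiewicz-valued setting can cost a strict inequality; I plan to handle this with the same choice-function trick that appears in the sup/inf manipulation used in the proof of the preceding theorem.
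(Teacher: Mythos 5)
Your proposal is correct and follows essentially the same route as the paper: necessity via the chain $\beta _{P}(A)\wedge \beta _{P}(B)\leq \tau _{P}(A)\wedge \tau _{P}(B)\leq \tau _{P}(A\cap B)\leq N_{x}^{P^{X}}(A\cap B)\leq \bigvee_{x\in C\subseteq A\cap B}\beta _{P}(C)$, and sufficiency by setting $\tau _{P}:=\beta _{P}^{(\cup )}$, verifying the union axiom with choice functions, and obtaining the finite-intersection axiom by applying (2) pointwise to produce sets $C_{x}$ with $\bigcup_{x\in A\cap B}C_{x}=A\cap B$, followed by the limiting argument in the level $t$. The paper organizes the intersection step by choosing, for each $x\in A\cap B$, one cell $B_{\lambda _{1x}}\cap B_{\lambda _{2x}}$ containing $x$ rather than decomposing over the full product index set, but this is only a cosmetic difference.
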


\begin{proof}
If $\beta _{P}$ is a pre-base for some fuzzifying $P$-topology $\tau _{P},$
then $\tau _{P}(X)=\beta _{P}^{(\cup )}(X).$ Clearly, $\beta _{P}^{(\cup
)}(X)=1.$ In addition, if $\ x\in A\cap B$, then \begin{eqnarray*} \beta _{P}(A)\wedge \beta
_{P}(B)\leq \tau _{P}(A)\wedge \tau _{P}(B) &\leq& \tau _{P}(A\cap B)\leq
 \ \ N_{x}^{P^{X}}(A\cap B) \\ &\leq& \underset{x\in C\subseteq A\cap B}{%
\bigvee }\ \ \beta _{P}(C).\end{eqnarray*} Conversely, if $\beta _{P}$ satisfies (1) and
(2), then we have $\tau _{P}$ is a fuzzifying $P$-topology. In fact, $\tau
_{P}(X)=1.$ For any $\left\{ A_{\lambda }:\lambda \in \Lambda \right\}
\subseteq P(X),$ we set $$\delta _{\lambda }=\left\{ \left\{ B_{\Phi
_{\lambda }}:\Phi _{\lambda }\in \Lambda _{\lambda }\right\} :\underset{\Phi
_{\lambda }\in \Lambda _{\lambda }}{\bigcup }B_{\Phi _{\lambda }}=A_{\lambda
}\right\} .$$ Then for any $$f\in \underset{\lambda \in \Lambda }{\prod }%
\delta _{\lambda }, \ \ \underset{\lambda \in \Lambda }{\bigcup }\ \underset{%
B_{\Phi _{\lambda }}\in f(\lambda )}{\bigcup }B_{\Phi _{\lambda }}=\underset{%
\lambda \in \Lambda }{\bigcup }A_{\lambda }.$$ Therefore
\begin{eqnarray*} \tau _{P}\left( \underset{\lambda \in \Lambda }{\bigcup }A_{\lambda
}\right) &=& \underset{\underset{\Phi \in \Lambda }{\bigcup }B_{\Phi }=\underset%
{\lambda \in \Lambda }{\bigcup }A_{\lambda }}{\bigvee }\ \underset{\Phi \in
\Lambda }{\bigwedge }\ \beta _{P}\left( B_{\Phi }\right) \\ &\geq& \underset{%
f\in \underset{\lambda \in \Lambda }{\prod }\delta _{\lambda }}{\bigvee }\
\underset{\lambda \in \Lambda }{\bigwedge }\ \underset{B_{\Phi _{\lambda
}}\in f(\lambda )}{\bigwedge }\beta _{P}(B_{\Phi _{\lambda }}) \\ &\geq& \underset{\lambda \in \Lambda }{%
\bigwedge }\ \underset{\left\{ B_{\Phi _{\lambda }}:\Phi _{\lambda }\in
\Lambda _{\lambda }\right\} \in \delta _{\lambda }}{\bigvee }\ \underset{%
\Phi _{\lambda }\in \Lambda _{\lambda }}{\bigwedge }\ \beta _{P}(B_{\Phi
_{\lambda }})=\underset{\lambda \in \Lambda }{\bigwedge }\tau _{P}\left(
A_{\lambda }\right) .\end{eqnarray*}
Finally, we need to prove that $$\tau _{P}(A\cap B)\geq \tau _{P}(A)\wedge
\tau _{P}(B).$$ If $\tau _{P}(A)>t,\tau _{P}(B)>t,$ then there exists $$%
\left\{ B_{\lambda _{1}}:\lambda _{1}\in \Lambda _{1}\right\} , \ \ \left\{
B_{\lambda _{2}}:\lambda _{2}\in \Lambda _{2}\right\} $$ such that $$\underset{%
\lambda _{1}\in \Lambda _{1}}{\bigcup }B_{\lambda _{1}}=A,\ \underset{%
\lambda _{2}\in \Lambda _{2}}{\bigcup }B_{\lambda _{2}}=B$$ and for any $$%
\lambda _{1}\in \Lambda _{1},\ \ \beta _{P}(B_{\lambda _{1}})>t,$$ for any $%
\lambda _{2}\in \Lambda _{2},$ $\ \beta _{P}(B_{\lambda _{2}})>t.$ Now, for
any $x\in A\cap B$, there exists $\lambda _{1x}\in \Lambda _{1},$ $\lambda
_{2x}\in \Lambda _{2}$ such that $x\in B_{\lambda _{1x}}\cap B_{\lambda
_{2x}}.$ From the assumption, we know that $$t<\beta _{P}(B_{\lambda
_{1x}})\wedge \beta _{P}(B_{\lambda _{2x}})\leq \underset{x\in C\subseteq
B_{\lambda _{1x}}\cap B_{\lambda _{2x}}}{\bigvee }\beta _{P}(C)$$ and
furthermore, there exists $C_{x}$ such that $$x\in C_{x}\subseteq B_{\lambda
_{1x}}\cap B_{\lambda _{2x}}\subseteq A\cap B,\ \ \beta _{P}(C_{x})>t.$$ Since
$$\underset{x\in A\cap B}{\bigcup }C_{x}=A\cap B,$$ we have $$t\leq \underset{%
x\in A\cap B}{\bigwedge }\beta _{P}(C_{x})\leq \underset{\underset{\lambda
\in \Lambda }{\bigcup }B_{\lambda }=A\cap B}{\bigvee }\ \underset{\lambda
\in \Lambda }{\bigwedge }\ \beta _{P}(B_{\lambda })=\tau _{P}(A\cap B).$$
Now, let $\tau _{P}(A)\wedge \tau _{P}(B)=k.$ For any natural number $n,$ we have $$\tau _{P}(A)>k-\frac{1}{n}, \ \ \tau _{P}(B)>k-\frac{1}{n}$$ and
so $\tau _{P}(A\cap B)\geq k-\frac{1}{n}.$ Therefore $\tau _{P}(A\cap B)\geq
k=$ $\tau _{P}(A)\wedge \tau _{P}(B).$
\end{proof}

\begin{definition}
$\varphi _{P}\in \Im (P(X))$ is called a pre-subbase of $\tau _{P}$ if $%
\varphi _{P}^{\Cap }$ is a pre-base of $\tau _{P},$ where $$\varphi
_{P}^{\Cap }(\underset{\lambda \in \Lambda }{\bigcap }B_{\lambda })=\underset%
{\underset{\lambda \in \Lambda }{\bigcap }B_{\lambda }=A}{\bigvee }\
\underset{\lambda \in \Lambda }{\bigwedge }\ \varphi _{P}(B_{\lambda })$$, $%
\left\{ B_{\lambda }:\lambda \in \Lambda \right\} \Subset P(X),$ with $%
"\Subset "$ standing for "a finite subset of".
\end{definition}

\begin{theorem}
$\varphi _{P}\in \Im (P(X))$ is a pre-subbase of some fuzzifying $P$%
-topology if and only if \ $\varphi _{P}^{(\cup )}(X)=1.$
\end{theorem}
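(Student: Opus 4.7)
The natural approach is to reduce to Theorem 2.3 via the definition of pre-subbase. By Definition 2.4, $\varphi_P$ is a pre-subbase of some fuzzifying $P$-topology if and only if $\varphi_P^{\Cap}$ is a pre-base of some such topology, which by Theorem 2.3 amounts to two conditions: (a) $(\varphi_P^{\Cap})^{(\cup)}(X)=1$, and (b) the meet--refinement axiom for $\varphi_P^{\Cap}$ at each $x\in A\cap B$. So the task reduces to showing that (a) and (b) together are equivalent to $\varphi_P^{(\cup)}(X)=1$.

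The first step is to observe that (b) holds automatically, without any hypothesis on $\varphi_P$. The key point is that the collection of finite intersections of members of $\varphi_P$ is closed under finite intersections: if $\{B_{\mu}\}$ and $\{B_{\nu}'\}$ are finite families witnessing $\varphi_P^{\Cap}(A)>t$ and $\varphi_P^{\Cap}(B)>t$, then their union is again a finite family whose intersection is $A\cap B$. Choosing $C=A\cap B\ni x$ as the existential witness then yields $\varphi_P^{\Cap}(A)\wedge \varphi_P^{\Cap}(B)\leq \varphi_P^{\Cap}(A\cap B)\leq \bigvee_{x\in C\subseteq A\cap B}\varphi_P^{\Cap}(C)$ directly, so (b) requires no assumption at all.

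Thus the theorem collapses to the equivalence of (a) with $\varphi_P^{(\cup)}(X)=1$. The backward direction is immediate from the pointwise inequality $\varphi_P(A)\leq \varphi_P^{\Cap}(A)$, obtained by taking the singleton family $\{A\}$ in the definition of $\varphi_P^{\Cap}$; this gives $\varphi_P^{(\cup)}(X)\leq (\varphi_P^{\Cap})^{(\cup)}(X)$ and hence (a). For the forward direction, given $(\varphi_P^{\Cap})^{(\cup)}(X)=1$ and $\varepsilon>0$, I would first choose a cover $\bigcup_{\lambda}A_{\lambda}=X$ with $\varphi_P^{\Cap}(A_{\lambda})>1-\varepsilon$ for every $\lambda$, then for each $\lambda$ a finite family $\{B_{\mu}^{\lambda}\}_{\mu\in\Lambda_{\lambda}}$ with $\bigcap_{\mu}B_{\mu}^{\lambda}=A_{\lambda}$ and $\varphi_P(B_{\mu}^{\lambda})>1-\varepsilon$. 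Since each $B_{\mu}^{\lambda}\supseteq A_{\lambda}$, the doubly indexed family $\{B_{\mu}^{\lambda}\}_{\lambda,\mu}$ still covers $X$, so $\varphi_P^{(\cup)}(X)\geq 1-\varepsilon$; letting $\varepsilon\to 0$ finishes the argument.

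The only conceptual content is the selection-and-enlargement trick in the forward direction of (a), where one must remember that replacing each $A_\lambda$ by a larger superset $B_{\mu}^{\lambda}$ preserves the covering property. Everything else is routine manipulation of {\L}ukasiewicz-valued suprema and infima, so I anticipate no serious obstacle.
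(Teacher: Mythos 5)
Your proof is correct and follows essentially the same route as the paper: both reduce, via the definition of pre-subbase, to the pre-base characterization and then verify that the meet--refinement condition holds automatically for $\varphi_P^{\Cap}$ by concatenating the two finite intersection families and taking $C=A\cap B$ as the witness. The paper dismisses the remaining equivalence of $(\varphi_P^{\Cap})^{(\cup)}(X)=1$ with $\varphi_P^{(\cup)}(X)=1$ as ``obvious,'' whereas you supply the singleton-family and superset-enlargement arguments explicitly; this fills a gap in exposition but is not a different approach.
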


\begin{proof}
We only will demonstrate that $\varphi _{P}^{\Cap }$ satisfies the second
condition of Theorem 2.2, and others are obvious. In fact\begin{eqnarray*}
\varphi _{P}^{\Cap }(A)\wedge \varphi _{P}^{\Cap }(B)&=& \left( \underset{%
\underset{\lambda _{1}\in \Lambda _{1}}{\bigcap }B_{\lambda _{1}}=A}{\bigvee
}\ \underset{\lambda _{1}\in \Lambda _{1}}{\bigwedge }\ \varphi
_{P}(B_{\lambda _{1}})\right) \wedge \left( \underset{\underset{\lambda
_{2}\in \Lambda _{2}}{\bigcap }B_{\lambda _{2}}=B}{\bigvee }\ \underset{%
\lambda _{2}\in \Lambda _{2}}{\bigwedge }\ \varphi _{P}(B_{\lambda
_{2}})\right) \\ &=& \underset{\underset{\lambda _{1}\in
\Lambda _{1}}{\bigcap }B_{\lambda _{1}}=A}{\bigvee }\ \underset{\underset{%
\lambda _{2}\in \Lambda _{2}}{\bigcap }B_{\lambda _{2}}=B}{\bigvee }\left( \
\underset{\lambda _{1}\in \Lambda _{1}}{\bigwedge }\ \varphi _{P}(B_{\lambda
_{1}})\right) \wedge \left( \ \underset{\lambda _{2}\in \Lambda _{2}}{%
\bigwedge }\ \varphi _{P}(B_{\lambda _{2}})\right) \\ &\leq& \underset{\underset{\lambda \in
\Lambda }{\bigcap }B_{\lambda }=A\cap B}{\bigvee }\left( \ \underset{\lambda
\in \Lambda }{\bigwedge }\ \varphi _{P}(B_{\lambda })\right) \\ &=& \varphi
_{P}^{\Cap }(A\cap B).\end{eqnarray*}
Therefore if $x\in A\cap B,$ then $$\varphi _{P}^{\Cap }(A)\wedge \varphi
_{P}^{\Cap }(B)\leq \varphi _{P}^{\Cap }(A\cap B)\leq \underset{x\in
C\subseteq A\cap B}{\bigvee }\varphi _{P}^{\Cap }(C).$$
\end{proof}

\section{Fuzzifying $P$-compact spaces}

\begin{theorem}
Let $(X,\tau )$ be a fuzzifying topological space, $\varphi _{P}$ be a
pre-subbase of $\tau _{P}$, and \begin{enumerate}
\item[($\beta _{1}$)] $:=(\forall \Re )(K_{\varphi _{P}}(\Re ,X)\rightarrow \exists \wp
((\wp \leq \Re )\wedge K(\wp ,X)\otimes FF(\wp ))),$
where $K_{\varphi _{P}}(\Re ,X):=K(\Re ,X)\otimes (\Re \subseteq \varphi
_{P});$
\item[($\beta _{2}$)] $:=(\forall S)((S$ \ is\ a\ universal\ net \ in\ $X)\rightarrow \exists
x((x\in X)\wedge (S\rhd ^{P}x));$
\item[($\beta _{3}$)] $:=(\forall S)((S\in N(X)\rightarrow (\exists T)(\exists
x)((T<S)\wedge (x\in X)\wedge (T\rhd ^{P}x)),$ \newline
where $\ "T<S"$ stands for "T is a subnet of S";
\item[($\beta _{4}$)] $:=(\forall S)((S\in N(X)\rightarrow \lnot (adh_{P}S\equiv \phi
)); $
\item[($\beta _{5}$)] $:=(\forall \Re )(\Re \in \digamma (P(X))\wedge \Re \subseteq
\digamma _{P}\otimes fI(\Re )\rightarrow \exists x\forall A(A\in \Re
\rightarrow x\in A)).$\end{enumerate}
Then $\vDash (X,\tau )\in \Gamma _{P}\leftrightarrow \beta _{i}\
,i=1,2,...,5.$

\end{theorem}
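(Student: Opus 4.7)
The plan is to exploit the identity $\Gamma_{P}(X,\tau)\equiv\Gamma(X,\tau_{P})$ recorded in the preliminaries, which lets me recast each $\beta_{i}$ as a standard characterization of ordinary fuzzifying compactness applied to the $P$-topology. The bridge is the observation that the pre-neighbourhood system $N_{x}^{P^{X}}$ coincides with the $\tau_{P}$-neighbourhood system, $\digamma_{P}$ is the family of $\tau_{P}$-closed sets, pre-convergence $\rhd^{P}$ and pre-adherence $\mathrm{adh}_{P}$ are the corresponding notions in $(X,\tau_{P})$, and a pre-subbase of $\tau_{P}$ is precisely a subbase of $\tau_{P}$ viewed as a fuzzifying $P$-topology (by Definition 2.2 and Theorem 2.3). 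Consequently each $\beta_{i}$ is the transcription of a known characterization of $\Gamma$ due to Ying [24, 26], and the task is to verify the transcription is faithful at the level of Łukasiewicz degrees.

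The natural order of equivalences I would follow is
$$\Gamma_{P}\;\Leftrightarrow\;\beta_{5}\;\Leftrightarrow\;\beta_{4}\;\Leftrightarrow\;\beta_{3}\;\Leftrightarrow\;\beta_{2},\qquad \Gamma_{P}\;\Leftrightarrow\;\beta_{1}.$$
For $\Gamma_{P}\leftrightarrow\beta_{5}$ I would read off the fuzzy finite-intersection characterization of $\Gamma(X,\tau_{P})$ by complementation, using that $\Re\subseteq\digamma_{P}$ means $\{X\setminus A:A\in\Re\}\subseteq\tau_{P}$ together with the De Morgan-style interplay between $K$ and $fI$ recorded in [26]. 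For $\beta_{5}\leftrightarrow\beta_{4}$, given a net $S$ I would apply $\beta_{5}$ to the family of pre-closures of the tails of $S$, whose FIP is built in, and whose common point is exactly a pre-adherence point of $S$; conversely, any pre-closed FIP family $\Re$ gives rise to a net (indexed by finite subfamilies of $\Re$ with points chosen in their intersections) whose pre-adherence lies in every member. The steps $\beta_{4}\leftrightarrow\beta_{3}$ and $\beta_{3}\leftrightarrow\beta_{2}$ are the fuzzifying versions of the classical facts that pre-adherence points are exactly subnet pre-limit points and that every net has a universal subnet (Kelley) which pre-converges to each of its pre-adherence points; the arguments are essentially formal once the definitions of $\widetilde{\subset}$ and $\widetilde{\sqsubset}$ are unfolded.

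The principal obstacle is $\Gamma_{P}\leftrightarrow\beta_{1}$, which is the Alexander pre-subbase theorem in this fuzzifying setting. One direction is routine: since $\varphi_{P}\subseteq\tau_{P}$, we have $\vDash K_{\varphi_{P}}(\Re,X)\rightarrow K_{P}(\Re,X)$, so $\Gamma_{P}\Rightarrow\beta_{1}$ is immediate. The delicate direction is $\beta_{1}\Rightarrow\Gamma_{P}$: for a threshold $t$, I would take a $\tau_{P}$-open cover $\Re$ with $K_{P}(\Re,X)>t$ for which no finite subfamily covers to degree $>t$, extend it by Zorn's lemma to a maximal such family $\Re^{\ast}$, and then show $\Re^{\ast}\cap\varphi_{P}$ is itself a pre-subbase cover to degree $>t$ by exploiting the $\Cap$-construction of $\varphi_{P}^{\Cap}$ from Definition 2.2 together with maximality (if $x$ lies in $B_{1}\cap\cdots\cap B_{n}\in\Re^{\ast}$ with each $B_{i}\in\varphi_{P}$, then maximality forces at least one $B_{i}$ into $\Re^{\ast}$). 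The real work is threading the Łukasiewicz degrees through this Zorn argument so that the failure of finite subcovers for $\Re^{\ast}$ transfers to $\Re^{\ast}\cap\varphi_{P}$, contradicting $\beta_{1}$.

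Once the Alexander step is in hand, the remaining equivalences are bookkeeping. I would present the five implications $\Gamma_{P}\Rightarrow\beta_{5}\Rightarrow\beta_{4}\Rightarrow\beta_{3}\Rightarrow\beta_{2}\Rightarrow\Gamma_{P}$ as a closed cycle, handling each step at the level of the semantic values $[\cdot]$ rather than classical truth, and prove $\Gamma_{P}\Leftrightarrow\beta_{1}$ separately. Invoking the $\Gamma_{P}=\Gamma(\cdot,\tau_{P})$ reduction throughout keeps the arguments parallel to Ying's for ordinary $\Gamma$, so no essentially new logical machinery is needed beyond the subbase step.
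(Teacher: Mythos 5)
Your architecture is legitimate in outline but leans on two things that do not hold up, and they sit exactly where the paper spends its effort. First, the bridge you invoke --- that $\Gamma _{P}(X,\tau )=\Gamma (X,\tau _{P})$ lets you import Ying's characterizations of $\Gamma $ wholesale, because $N_{x}^{P^{X}}$, $\digamma _{P}$, $\rhd ^{P}$ are "just the corresponding notions in $(X,\tau _{P})$" --- fails at the start: $\tau _{P}$ is closed under arbitrary unions but not, in general, under finite intersections. That is precisely why the paper isolates "fuzzifying $P$-topological space" as an extra hypothesis, and Theorem 3.1 is stated \emph{without} it. So none of the net/subbase characterizations of $\Gamma $ can be cited off the shelf; every implication has to be re-derived at the level of Łukasiewicz degrees, which is what the paper's proof actually does.

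Second, and more seriously, your route to $\beta _{1}\Rightarrow \Gamma _{P}$ via a graded Alexander subbase lemma (a Zorn-maximal family with no finite subcover above threshold $t$) is not carried out, and it is far from clear it can be: "no finite subcover to degree $>t$" is the statement $\bigvee _{\wp \leq \Re }[K(\wp ,X)\otimes FF(\wp )]\leq t$, a supremum over \emph{fuzzy} subfamilies, and neither the existence of a maximal such $\Re ^{\ast }$ nor the adjoin-a-subbasic-set maximality trick transfers in any evident way; you yourself defer this as "the real work". The paper never proves $\beta _{1}\Rightarrow \Gamma _{P}$ directly. It establishes the single chain $\Gamma _{P}\leq [\beta _{1}]\leq [\beta _{2}]\leq [\beta _{3}]\leq [\beta _{4}]\leq [\beta _{5}]=\Gamma _{P}$ (the last equality quoted from [1, Theorem 6]), so the subbase step collapses to $[\beta _{1}]\leq [\beta _{2}]$: from a universal net $S$ failing to pre-converge below level $\lambda $ one extracts, through the $\Cap $-construction of the pre-subbase, a family $\Re _{\circ }\subseteq \varphi _{P}$ with $[K_{\varphi _{P}}(\Re _{\circ },X)]\geq 1-\lambda $ such that every $\wp \leq \Re _{\circ }$ has $[K(\wp ,X)\otimes FF(\wp )]=0$, because $S$ is almost in the complement of each member of $\wp _{\delta }$. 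Note also that your middle cycle runs opposite to the paper's ($[\beta _{5}]\leq [\beta _{4}]\leq [\beta _{3}]\leq [\beta _{2}]$ versus $[\beta _{2}]\leq [\beta _{3}]\leq [\beta _{4}]\leq [\beta _{5}]$); both directions are classically available, but your orientation then obliges you to prove $\Gamma _{P}\leq [\beta _{5}]$ and $[\beta _{2}]\leq \Gamma _{P}$ from scratch, neither of which is sketched. Until a graded universal-net argument (or a genuinely degree-theoretic Zorn argument) is supplied for the subbase step, the hardest part of the theorem is missing.
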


\begin{proof}
(1) Since $\varphi _{P}\subseteq \tau _{P},$ $[\Re \subseteq \varphi
_{P}]\leq \lbrack \Re \subseteq \tau _{P}]$ for any $\Re \in \Im (P(X)).$
Then $[K_{\varphi _{P}}(\Re ,X)]\leq \lbrack K_{P}(\Re ,X)]$. Therefore \ $%
\Gamma _{P}(X,\tau )\leq \lbrack \beta _{1}].$\newline
(2) $[\beta _{2}]=\bigwedge \left\{ \underset{x\in X}{\bigvee }[S\rhd
^{P}x]:S {\rm \ is \ a \ universal \ net\  in \ } X\right\} .$ \newline
(2.1) Assume $X$ is finite. We set $X=\{x_{1},...,x_{m}\}.$ For a universal net $S$ in $X$, there exists $i_{\circ }\in \{1,...,m\}$ with $S%
\widetilde{\subset }\{x_{i_{\circ }}\}.$ If not, then for any $i\in
\{1,...,m\},$ $S\widetilde{\not\subset }\{x_{i_{\circ }}\},$ $S\widetilde{%
\subset }X-\{x_{i_{\circ }}\}$ and $S\widetilde{\subset }\bigcap%
\limits_{i=1}^{m}(X-\{x_{i}\})=\phi ,$ that is a contradiction. Therefore $%
x_{i_{\circ }}\notin A$ and $N_{x_{i_{\circ }}}^{P}(A)=0$ (see[2],Theorem
4.2 (1)) provided $S\widetilde{\not\subset }A,$ and furthermore $[S\rhd
^{P}x_{i_{\circ }}]=\underset{S\widetilde{\not\subset }A}{\bigwedge }\left(
1-N_{x_{i_{\circ }}}^{P}(A)\right) =1.$ Therefore $[\beta _{2}]=1\geq
\lbrack \beta _{1}].$\newline
(2.2) In general, to prove that $[\beta _{1}]\leq \lbrack \beta _{2}]$ we
prove that for any $\lambda \in \lbrack 0,1],$ if $[\beta _{2}]<\lambda ,$
then $[\beta _{1}]<\lambda $. Assume for any $\lambda \in \lbrack 0,1]$, $%
[\beta _{2}]<\lambda .$ Then there exists a universal net $S$ in $X$ such
that $\underset{x\in X}{\bigvee }[S\rhd ^{P}x]<\lambda $ and for any $x\in
X, $ $[S\rhd ^{P}x]=\underset{S\widetilde{\not\subset }A}{\bigwedge }\left(
1-N_{x}^{P}(A)\right) <\lambda ,$ i.e., there exists $A\subseteq X$ with $S%
\widetilde{\not\subset }A$ and $N_{x}^{P}(A)>1-\lambda .$ Since $\varphi
_{P} $ is a pre-subbase of $\tau _{P},\varphi _{P}^{\Cap }$ is a pre-base of
$\tau _{P}$ and from Definition 2.1, we have $\underset{x\in B\subseteq A}{%
\bigvee }\varphi _{P}^{\Cap }(B)\geq N_{x}^{P}(A)>1-\lambda ,$ i.e., there
exists $B\subseteq A$ such that $x\in B\subseteq A$ and $\bigvee \left\{
\underset{\lambda \in \Lambda }{\min }\varphi _{P}(B_{\lambda }):\underset{%
\lambda \in \Lambda }{\bigcap }B_{\lambda }=B,B_{\lambda }\subseteq
X,\lambda \in \Lambda \right\} =\varphi _{P}^{\Cap }(B)>1-\lambda ,$ where $%
\Lambda $ is finite. Therefore there exists a finite set $\Lambda $ and \ $%
B_{\lambda }\subseteq X(\lambda \in \Lambda )$ such that $\underset{\lambda
\in \Lambda }{\bigcap }B_{\lambda }=B$ and for any $\lambda \in \Lambda
,\varphi _{P}(B_{\lambda })>1-\lambda .$ Since $S\widetilde{\not\subset }A$
and $\Lambda $ is finite, there exists $\lambda (x)\in \Lambda $ such that $S%
\widetilde{\not\subset }B_{\lambda (x)}.$ We set $$\Re _{\circ }(B_{\lambda
(x)})=\underset{x\in X}{\bigvee }\varphi _{P}(B_{\lambda (x)}).$$ If $\wp \leq
\Re _{\circ },$ then for any $\delta >0,\wp _{\delta }\subseteq \{B_{\lambda
(x)}:x\in X\}.$ Consequently, for any $B\in \wp _{\delta },S\widetilde{%
\not\subset }B$ and $S\widetilde{\subset }B^{c}$ since $S$ is a universal
net. If \ $$[FF(\wp )]=1-\inf \left\{ \delta \in \lbrack 0,1]:F(\wp _{\delta
})\right\} =t,$$ then for any $n\in w$ (the non-negative integer), $$\inf
\left\{ \delta \in \lbrack 0,1]:F(\wp _{\delta })\right\} <1-t+\frac{1}{n},$$
and there exists $\delta _{\circ }<1-t+\frac{1}{n}$ such that $F(\wp
_{\delta \circ }).$ If $\delta _{\circ }=0,$then $P(X)=\wp _{\delta \circ }$
is finite and it is proved in (2.1). If $\delta _{\circ }>0,$ then for any $%
B\in \wp _{\delta \circ },S\widetilde{\subset }B^{c}.$ Since $F(\wp _{\delta
\circ }),$ we have $$S\widetilde{\subset }\bigcap \{B^{c}:B\in \wp _{\delta
\circ }\}\neq \phi .$$ i.e., $\bigcup \wp _{\delta \circ }\neq X$ and there
exist $x_{\circ }\in X$ such that for any $B\in \wp _{\delta \circ
},x_{\circ }\notin B.$ Therefore, if $x_{\circ }\in B,$ then $B\notin \wp
_{\delta \circ }$, i.e., $$\wp (B)<\delta \circ , K(\wp ,X)=\underset{x\in X%
}{\bigwedge }\ \underset{x\in B}{\bigvee }\wp (B)\leq \underset{x_{\circ
}\in B}{\bigvee }\wp (B)\leq \delta \circ <1-t+\frac{1}{n}.$$ Let $%
n\rightarrow \infty .$ We obtain $K(\wp ,X)\leq 1-t$ and $[K(\wp ,X)\otimes
FF(\wp )]=0.$ In addition, $[K_{\varphi _{P}}(\Re _{\circ },X)]\geq
1-\lambda .$ In fact, $[\Re _{\circ }\subseteq \varphi _{P}]=1$ and \ $$%
[K(\Re _{\circ },X)]=\underset{x\in X}{\bigwedge }\ \underset{x\in B}{%
\bigvee }\Re _{\circ }(B)\geq \underset{x\in X}{\bigwedge }\Re _{\circ
}(B_{\lambda (x)})\geq \underset{x\in X}{\bigwedge }\varphi _{P}(B_{\lambda
(x)})\geq 1-\lambda $$ where $x\in B_{\lambda (x)}.$ Now, we have
\begin{eqnarray*} [\beta _{1}]&=&(\forall \Re )(K_{\varphi _{P}}(\Re ,X)\rightarrow \exists \wp
((\wp \leq \Re )\wedge K(\wp ,X)\otimes FF(\wp )))\\ &\leq& K_{\varphi _{P}}(\Re _{\circ },X)\rightarrow \exists \wp ((\wp
\leq \Re _{\circ })\wedge K(\wp ,X)\otimes FF(\wp ))\\ &=& \min (1,1-K_{\varphi _{P}}(\Re _{\circ },X)+\underset{\wp \leq \Re
_{\circ }}{\bigvee }[K(\wp ,X)\otimes FF(\wp ))] \leq \lambda .\end{eqnarray*}
By noticing that $\lambda $ is arbitrary, we have $[\beta _{1}]\leq \lbrack
\beta _{2}].$\newline
(3) It is immediate that $[\beta _{2}]\leq \lbrack \beta _{3}].$ \newline
(4) To prove that $[\beta _{3}]\leq \lbrack \beta _{4}],$ first we prove
that $$[\exists T\ ((T<S)\wedge (T\rhd ^{P}x))]\leq [S\propto ^{P}x],$$
where $$[\exists T\ ((T<S)\wedge (T\rhd ^{P}x))]= \underset{T<S}{\bigvee }\
\underset{T\widetilde{\not\subset }A}{\bigwedge }\left(
1-N_{x}^{P}(A)\right) $$ and $$[S\propto ^{P}x]=\underset{S\widetilde{%
\not\sqsubset }A}{\bigwedge }\left( 1-N_{x}^{P}(A)\right) .$$ Indeed, for any
$T<S$ one can deduce $\{A:S\widetilde{\not\sqsubset }A\}\subseteq \{A:T%
\widetilde{\not\subset }A\}$ as follows. Suppose $T=S\circ K$ . If $S%
\widetilde{\not\sqsubset }A,$ then there exists $m\in D$ such that $%
S(n)\notin A$ when $n\geq m,$ where $\geq $ directs the domain $D$ of $S$.
Now, we will show that $T\widetilde{\not\subset }A.$ If not, then there
exists $p\in E$ such that $T(q)\in A$ when $q\geq p,$ where $\geq $ directs
the domain $E$ of $T$. Moreover, let $n_{1}\in E$ such that $%
K(n_{1})\geq m$ since $T<S,$ and there exists $n_{2}\in E$ such that $%
n_{2}\geq n_{1},p$ \ since $(E,\geq )$ directed. So, $K(n_{2})\geq
K(n_{1})\geq m,$ $S\circ K(n_{2})\notin A$ and $S\circ K(n_{2})=T(n_{2})\in
A.$ They are contrary. Hence $$ \{A:S\widetilde{\not\sqsubset }A\}\subseteq
\{A:T\widetilde{\not\subset }A\}.$$ Therefore \begin{eqnarray*} [\exists T\ ((T<S)\wedge
(T\rhd ^{P}x))]&=& \underset{T<S}{\bigvee }\ \underset{T\widetilde{%
\not\subset }A}{\bigwedge }\left( 1-N_{x}^{P}(A)\right) \\ &=& \underset{T<S}{%
\bigvee }\ \underset{\{A:T\widetilde{\not\subset }A\}}{\bigwedge }\left(
1-N_{x}^{P}(A)\right) \\ &\leq& \underset{\{A:S\widetilde{\not\sqsubset }A\}}{%
\bigwedge }\left( 1-N_{x}^{P}(A)\right) \\ &=& \underset{S\widetilde{%
\not\sqsubset }A}{\bigwedge }\left( 1-N_{x}^{P}(A)\right) = [S\propto
^{P}x].\end{eqnarray*} Therefore for any $x\in X$ and $S\in N(X)$ we have \newline
\begin{eqnarray*}[\beta _{3}]&=&\underset{S\in N(X)}{\bigwedge }\ \underset{x\in X}{\bigvee }%
[\exists T\ ((T<S)\wedge (T\rhd ^{P}x))] \\ &\leq& \underset{S\in N(X)}{%
\bigwedge }\ \underset{x\in X}{\bigvee }[S\propto ^{P}x] = \underset{S\in
N(X)}{\bigwedge }\ \lnot \left( \underset{x\in X}{\bigwedge }\left(
1-[S\propto ^{P}x]\right) \right) \\ &=& \underset{S\in N(X)}{\bigwedge }[\lnot (adh_{P}S\equiv \phi )]= [\beta
_{4}].\end{eqnarray*}
(5) We want to show that $[\beta _{4}]\leq \lbrack \beta _{5}].$ For any $%
\Re \in F(P(X)),$ assume $[fI(\Re )]=\lambda .$ Then for any $\delta
>1-\lambda ,$ if $A_{1},...,A_{n}\in \Re _{\delta },$ $A_{1}\cap A_{2}\cap
...\cap A_{n}\neq \phi .$ In fact, we set $\wp (A_{i})=\bigvee_{i=1}^{n}\Re
(A_{i}).$ Then $\wp \leq \Re $ and $FF(\wp )=1.$ By putting $\varepsilon
=\lambda +\delta -1>0,$ we obtain \newline
$$\lambda -\varepsilon <\lambda \leq [FF(\wp )\rightarrow (\exists
x)(\forall B)(B\in \wp \rightarrow x\in B)]=\underset{x\in X}{\bigvee }\
\underset{x\notin B}{\bigwedge }(1-\wp (B)).$$ There exists $x_{\circ }\in X$
such that $\lambda -\varepsilon <\underset{x_{\circ }\notin B}{\bigwedge }%
(1-\wp (B)),$ $x_{\circ }\notin B$ implies $\wp (B)<1-\lambda +\varepsilon
=\delta $ and $x_{\circ }\in \cap \wp _{\delta }=A_{1}\cap A_{2}\cap ...\cap
A_{n}.$ Now, we set $\vartheta _{\delta }=\left\{ A_{1}\cap A_{2}\cap
...\cap A_{n}:n\in N,A_{1},...,A_{n}\in \Re _{\delta }\right\} $ and $%
S:\vartheta _{\delta }\rightarrow X,B\mapsto x_{B}\in B,B\in \vartheta
_{\delta }$ and know that $(\vartheta _{\delta },\subseteq )$ is a directed
set and $S$ is a net in $X$. Therefore $$[\beta _{4}]\leq \lbrack \lnot
(adh_{P}S\equiv \phi )]=\underset{x\in X}{\bigvee }\ \underset{S\overset{%
\sim }{\not\sqsubset }A}{\bigwedge }(1-N_{x}^{P}(A)).$$ Assume $[\Re
\subseteq F_{P}]=\mu .$ Then for any $B\in P(X),\Re (B)\leq 1+F_{P}(B)-\mu ,$
and $$[\Re \subseteq F_{P}\otimes fI(A)\rightarrow (\exists x)(\forall
A)((A\in \Re )\rightarrow x\in A)]=\min (1,2-\mu -\lambda +\underset{x\in
X}{\bigvee }\ \underset{x\notin A}{\bigwedge }(1-\Re (A))).$$ Therefore it
suffices to show that for any $$x\in X,\underset{S\overset{\sim }{%
\not\sqsubset }A}{\bigwedge }(1-N_{x}^{P}(A))\leq 2-\mu -\lambda +\underset{%
x\notin A}{\bigwedge }(1-\Re (A)),$$ i.e., $$\underset{x\notin A}{\bigvee }\Re
(A)\leq 2-\mu -\lambda +\underset{S\overset{\sim }{\not\sqsubset }A}{\bigvee
}N_{x}^{P}(A)$$ for some $\delta >1-\lambda .$For any $t\in \lbrack 0,1],$ if
$\underset{x\notin A}{\bigvee }\Re (A)>t,$ then there exists $A_{\circ }$
such that $x_{\circ }\notin A_{\circ }$ and \ $\Re (A_{\circ })>t.$ \newline
Case 1. $t\leq 1-\lambda ,$ then $t\leq 2-\mu -\lambda +\underset{S\overset{%
\sim }{\not\sqsubset }A}{\bigvee }N_{x}^{P}(A).$\newline
Case 2. $t>1-\lambda .$ Here we set $\delta =\frac{1}{2}(t+1-\lambda )$ and
have $A_{\circ }\in \Re _{\delta },A_{\circ }\in \vartheta _{\delta }.$ In
addition, $$t<\Re (A_{\circ })\leq 1+F_{P}(A_{\circ })-\mu , \ t+\mu -1\leq
F_{P}(A_{\circ })=\tau _{P}(A_{\circ }^{c}).$$ Since $A_{\circ }\in \vartheta
_{\delta },$ we know that $S_{B}\in A_{\circ },$ $i.e.,$ $S_{B}\notin
A_{\circ }^{c}$ when $B\subseteq A_{\circ }$ and $S\overset{\sim }{%
\not\sqsubset }A_{\circ }^{c}.$ Therefore, $$2-\mu -\lambda +\underset{S%
\overset{\sim }{\not\sqsubset }A}{\bigvee }N_{x}^{P}(A)\geq 2-\mu
-\lambda +N_{x}^{P}(A_{\circ }^{c})\geq 2-\mu -\lambda +\tau
_{P}(A_{\circ }^{c})\geq t+(1-\lambda )\geq t.$$ By noticing that t is
arbitrary, we have completed the proof. \newline
(6) To prove that $[\beta _{5}]=[(X,\tau )\in \Gamma _{P}]$ see [1] Theorem
6.
\end{proof}

The above theorem is a generalization of the following corollary.

\begin{corollary}
The following are equivalent for a topological space $(X,\tau )$.\newline
(a) X is a strong compact space.\newline
(b) Every cover of \ X\ by members of a pre-subbase of $\tau _{P}$ has a
finite subcover.\newline
(c) Every universal net in X pre-converges to a point in X.\newline
(d) Each net in X has a subnet that pre-converges to some point in X.\newline
(e) Each net in X has a pre-adherent point.\newline
(f) Each family of pre-closed sets in X that has the finite intersection
property has a non-void intersection.
\end{corollary}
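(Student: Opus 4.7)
The plan is to derive Corollary 3.1 as a direct specialization of Theorem 3.1 to the crisp case. I would begin by noting that any classical topological space $(X,\tau)$ can be regarded as a fuzzifying topological space in which $\tau\in\Im(P(X))$ is two-valued: $\tau(A)=1$ iff $A$ is open in the usual sense. In this Boolean setting, all fuzzy logical connectives $\wedge, \vee, \otimes, \rightarrow$ collapse to their classical counterparts (the \L ukasiewicz implication $\min(1,1-a+b)$ returns $1$ exactly when $a\leq b$), and therefore the derived objects $\tau_P$, $\digamma_P$, $N_x^P$, and $\varphi_P^{\Cap}$ all specialize to the ordinary families of pre-open sets, pre-closed sets, pre-neighborhoods, and finite intersections of pre-subbase elements.

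Next I would verify that under this specialization each of the fuzzy predicates in Theorem 3.1 collapses to the corresponding classical statement. Concretely: $[\Gamma_P(X,\tau)]=1$ reads ``every crisp pre-open cover $\Re$ of $X$ admits a finite crisp subcover'', which is (a); $[\beta_1]=1$ reduces to (b) because membership in $\varphi_P$ becomes ordinary membership in a pre-subbase; $[\beta_2]=1$, $[\beta_3]=1$, $[\beta_4]=1$ become (c), (d), (e) respectively, since the predicates $\rhd^P$, $\propto^P$, $<$, and the pre-adherence $adh_P$ reduce to their classical meanings once $N_x^P$ is $\{0,1\}$-valued; and $[\beta_5]=1$ becomes (f) because the fuzzy finite intersection property $fI$ collapses to the ordinary FIP and $\Re\subseteq\digamma_P$ collapses to ``every member of $\Re$ is pre-closed''. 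Once this dictionary is set up, Theorem 3.1 gives $\vDash \Gamma_P(X,\tau)\leftrightarrow\beta_i$ for $i=1,\ldots,5$, and evaluating these biconditionals at truth value $1$ yields the chain (a) $\Leftrightarrow$ (b) $\Leftrightarrow$ (c) $\Leftrightarrow$ (d) $\Leftrightarrow$ (e) $\Leftrightarrow$ (f).

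The only delicate point, and essentially the main (if modest) obstacle, is ensuring that each quantitative inequality appearing inside the definitions really does reduce to a two-valued implication. This amounts to checking that suprema and infima of subsets of $\{0,1\}$ stay in $\{0,1\}$ and that the tensor $\otimes$ agrees with conjunction on $\{0,1\}$; both facts are immediate. Since no new combinatorial argument is needed beyond the reduction, the corollary follows at once from Theorem 3.1 with no additional computation.
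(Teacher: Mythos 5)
Your proposal is correct and is exactly the route the paper intends: the paper gives no separate proof of this corollary, merely noting that Theorem 3.1 "is a generalization" of it, i.e., the corollary is obtained by restricting all predicates to the two-valued crisp case precisely as you describe. Your explicit check that the connectives, suprema/infima, and the derived notions $\tau_P$, $N_x^P$, $\varphi_P^{\Cap}$, $adh_P$, and $fI$ all collapse to their classical counterparts supplies the (routine) detail the paper leaves implicit.
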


\begin{definition}
Let $\{(X_{s},\tau _{s}):s\in S\}$ be a family of fuzzifying topological
spaces, $\underset{s\in S}{\prod }X_{s}$ be the cartesian product of $%
\{X_{s}:s\in S\}$ and $\varphi =\{p_{s}^{-1}(U_{s}):s\in S,U_{s}\in
P(X_{s})\},$ where $p_{t}:\underset{s\in S}{\prod }X_{s}\rightarrow
X_{t}(t\in S)$ is a projection. For $\Phi \subseteq \varphi ,$ $S(\Phi )$
stands for the set of indices of elements in $\Phi $. The $P$-base $\beta
_{P}\in \Im (\underset{s\in S}{\prod }X_{s})$ of \ $\underset{s\in S}{\prod }%
(\tau _{P})_{s}$ is defined as

$$V\in \beta _{P}:=(\exists \Phi )(\Phi \Subset \varphi \wedge (\bigcap \Phi
=V))\rightarrow \forall s(s\in S(\Phi )\rightarrow V_{s}\in (\tau
_{P})_{s}), $$ i.e.,

$$\beta _{P}(V)=\underset{\Phi \Subset \varphi ,\bigcap \Phi =V}{\bigvee }\
\underset{s\in S(\Phi )}{\bigwedge }(\tau _{P})_{s}(V_{s}).$$
\end{definition}

\begin{definition}
Let $(X,\tau ),$ $(Y,\sigma )$\ be two fuzzifying topological space. A unary
fuzzy predicate $O_{P}\in F(Y^{X}),$ is called fuzzifying pre-openness, is
given as: $O_{P}(f):=\forall U(U\in \tau _{P}\rightarrow f(U)\in \sigma
_{P}).$ Intuitively, the degree to which f is pre-open is $$[O_{P}(f)]=%
\underset{B\subseteq X}{\bigwedge }\min (1,1-\tau _{P}(U)+\sigma _{P}\left(
f(U)\right) ).$$
\end{definition}

\begin{lemma}
Let $(X,\tau )$ and $(Y,\sigma )$\ be two fuzzifying topological space. For
any $f\in Y^{X},$\newline
$$O_{P}(f):=\forall B(B\in \beta _{P}^{X}\rightarrow f(B)\in \sigma _{P}),$$
where $\beta _{P}^{X}$ is a pre-base of $\tau _{P}.$
\end{lemma}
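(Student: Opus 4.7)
The statement asserts that the degree $[O_{P}(f)]$ coincides with the degree $t:=[\forall B(B\in \beta_{P}^{X}\rightarrow f(B)\in \sigma_{P})]$, where both sides unfold via the {\L}ukasiewicz implication into infima of the form $\bigwedge \min(1,1-(\cdot)+\sigma_{P}(f(\cdot)))$ taken respectively over all subsets of $X$ and over the pre-base. The plan is to prove the two inequalities separately.

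The inequality $[O_{P}(f)]\leq t$ is the easy half. Since $\beta_{P}^{X}$ is a pre-base of $\tau_{P}$, Theorem 2.1 gives $\tau_{P}=(\beta_{P}^{X})^{(\cup)}$, and by considering the trivial one-element decomposition $B=B$ we obtain $\beta_{P}^{X}(B)\leq \tau_{P}(B)$ for every $B\subseteq X$. Monotonicity of $\min(1,1-a+b)$ in $-a$ then yields $\min(1,1-\tau_{P}(B)+\sigma_{P}(f(B)))\leq \min(1,1-\beta_{P}^{X}(B)+\sigma_{P}(f(B)))$ pointwise, and taking the infimum over $B$ gives $[O_{P}(f)]\leq t$.

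For the converse $t\leq [O_{P}(f)]$, I would fix an arbitrary $U\subseteq X$ and aim to show $\min(1,1-\tau_{P}(U)+\sigma_{P}(f(U)))\geq t$, i.e., $\sigma_{P}(f(U))\geq t+\tau_{P}(U)-1$ whenever the right-hand side is nonnegative. Unfolding the definition of $t$, for every $B\subseteq X$ one has $\sigma_{P}(f(B))\geq t+\beta_{P}^{X}(B)-1$ (taken as $0$ when this is negative). Using Theorem 2.1 to write $\tau_{P}(U)=\bigvee_{\bigcup_{\lambda}B_{\lambda}=U}\bigwedge_{\lambda}\beta_{P}^{X}(B_{\lambda})$, I would fix any decomposition $U=\bigcup_{\lambda}B_{\lambda}$, note that $f(U)=\bigcup_{\lambda}f(B_{\lambda})$, and invoke the union axiom for $\sigma_{P}$ (Theorem 3.1(1)(b) of [2], already used in the proof of Theorem 2.1) to get
$$\sigma_{P}(f(U))\geq \bigwedge_{\lambda}\sigma_{P}(f(B_{\lambda}))\geq t-1+\bigwedge_{\lambda}\beta_{P}^{X}(B_{\lambda}).$$
Passing to the supremum over all such decompositions then yields $\sigma_{P}(f(U))\geq t+\tau_{P}(U)-1$, which is exactly what is needed.

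The main obstacle I anticipate is the bookkeeping around the {\L}ukasiewicz truncation at $0$: the implication encoded in $t$ only gives $\sigma_{P}(f(B))\geq t+\beta_{P}^{X}(B)-1$ when this quantity is nonnegative, so the inf/sup manipulation must be treated with the convention $\max(0,\cdot)$ or split into the cases $t+\tau_{P}(U)-1\leq 0$ (trivial) and $>0$. Once this case distinction is handled cleanly, the argument is a direct combination of Theorem 2.1 with the union axiom for $\sigma_{P}$, with no further technicalities.
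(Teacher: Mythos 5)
Your proof is correct and follows essentially the same route as the paper's: the easy half from $\beta_P^X(B)\le\tau_P(B)$ (via the trivial one-set decomposition in Theorem 2.1), and the hard half by combining $\tau_P=(\beta_P^X)^{(\cup)}$ with $f(\bigcup_\lambda B_\lambda)=\bigcup_\lambda f(B_\lambda)$ and the union axiom for $\sigma_P$. Your bookkeeping --- propagating the pointwise bound $\sigma_P(f(B))\ge t+\beta_P^X(B)-1$ through the sup--inf rather than estimating the difference $\tau_P(U)-\sigma_P(f(U))$ as the paper does --- is if anything the cleaner way to organize the same computation, and the truncation worry you raise is harmless since that pointwise bound holds unconditionally (its right-hand side may be negative, in which case it is trivial).
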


\begin{proof}
Clearly, $[O_{P}(f)]\leq \lbrack \forall U(U\in \beta _{P}^{X}\rightarrow
f(U)\in \sigma _{P})]$. Conversely, for any $U\subseteq X,$ we are going to
prove $$\min (1,1-\tau _{P}(U)+\sigma _{P}\left( f(U)\right) )\geq \lbrack
\forall V(V\in \beta _{P}^{X}\rightarrow f(V)\in \sigma _{P})].$$ If $\tau
_{P}(U)\leq \sigma _{P}(f(U)),$ it is hold clearly. Now assume $\tau
_{P}(U)>\sigma _{P}(f(U)).$ If $\Re \subseteq P(X)$ with $\bigcup \Re =U,$
then $\bigcup_{V\in \Re }f(V)=f(\bigcup \Re )=f(U).$ Therefore \newline
\begin{eqnarray*} \tau _{P}(U)-\sigma _{P}(f(U))&=& \underset{\Re \subseteq P(X),\bigcup \Re =U}{%
\bigvee }\ \underset{V\in \Re }{\bigwedge }\beta _{P}^{X}(V)-\underset{\wp
\subseteq P(Y),\bigcup \wp =f(U)}{\bigvee }\ \underset{W\in \wp }{\bigwedge }%
\sigma _{P}(W)\\ &\leq& \underset{\Re \subseteq
P(X),\bigcup \Re =U}{\bigvee }\ \underset{V\in \Re }{\bigwedge }\beta
_{P}^{X}(V)-\underset{\Re \subseteq P(X),\bigcup \Re =U}{\bigvee }\ \underset%
{V\in \Re }{\bigwedge }\sigma _{P}(f(V))\\ &\leq& \underset{\Re \subseteq
P(X),\bigcup \Re =U}{\bigvee }\ \underset{V\in \Re }{\bigwedge }\left( \beta
_{P}^{X}(V)-\sigma _{P}(f(V))\right) , \ \min (1,1-\tau _{P}(U)+\sigma _{P}\left( f(U)\right) )\\ &\geq& \underset{\Re
\subseteq P(X),\bigcup \Re =U}{\bigvee }\ \underset{V\in \Re }{\bigwedge }%
\min (1,1-\beta _{P}^{X}(V)+\sigma _{P}\left( f(V)\right) )\\ &\geq& \lbrack \forall V(V\in \beta _{P}^{X}\rightarrow f(V)\in \sigma _{P})].\end{eqnarray*}
\end{proof}

\begin{lemma}
For any family $\{(X_{s},\tau _{s}):s\in S\}$ of fuzzifying topological
spaces.\newline
(1) $\vDash (\forall s)(s\in S\rightarrow p_{s}\in O_{P});$ \\
(2) $\vDash
(\forall s)(s\in S\rightarrow p_{s}\in C_{P}).$
\end{lemma}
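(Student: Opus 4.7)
The plan is to handle both assertions by reducing them to the natural pre-base $\beta_P$ of Definition 3.2 and exploiting the basic bound $\tau_s(A) \le (\tau_P)_s(A)$ (every fuzzifying open set is fuzzifying pre-open, since $A = \mathrm{Int}(A) \subseteq \mathrm{Int}(\mathrm{Cl}(A))$).

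For (1), by Lemma 3.1 it suffices to prove $\beta_P(V) \le (\tau_P)_s(p_s(V))$ for every $V \subseteq \prod_{t \in S} X_t$. Fix such a $V$ and any finite $\Phi \Subset \varphi$ with $\bigcap \Phi = V$; for each $t \in S(\Phi)$ let $U_t$ denote the factor for which $p_t^{-1}(U_t) \in \Phi$. If $V = \emptyset$ then $p_s(V) = \emptyset$ and $(\tau_P)_s(\emptyset) = 1$. Otherwise, assuming each $X_t$ is non-empty, a direct set-theoretic argument gives $p_s(V) = U_s$ when $s \in S(\Phi)$ and $p_s(V) = X_s$ otherwise. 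In every case $(\tau_P)_s(p_s(V)) \ge \bigwedge_{t \in S(\Phi)}(\tau_P)_t(U_t)$, using $(\tau_P)_s(X_s) = 1$. Taking the supremum over all admissible $\Phi$ gives $(\tau_P)_s(p_s(V)) \ge \beta_P(V)$, so $[p_s \in O_P] = 1$.

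For (2), I need $\tau_P(p_s^{-1}(B)) \ge \tau_s(B)$ for every $B \in P(X_s)$, where $\tau_P$ denotes the product pre-topology $\prod_{t \in S}(\tau_P)_t = \beta_P^{(\cup)}$ (Theorem 2.1). Since $p_s^{-1}(B) \in \varphi$, choosing the singleton $\Phi = \{p_s^{-1}(B)\}$ in Definition 3.2 gives $\beta_P(p_s^{-1}(B)) \ge (\tau_P)_s(B) \ge \tau_s(B)$. Hence $\tau_P(p_s^{-1}(B)) = \beta_P^{(\cup)}(p_s^{-1}(B)) \ge \beta_P(p_s^{-1}(B)) \ge \tau_s(B)$, which is exactly the inequality required for $[C_P(p_s)] = 1$.

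The only delicate step is the identification of $p_s(V)$ for a pre-basic $V$ in part (1): the computation must hold uniformly over every decomposition $V = \bigcap \Phi$, so that the supremum inside $\beta_P(V)$ transfers faithfully through $p_s$. Once this case analysis is in place, the remaining inequalities follow by monotonicity of meets, the fact that $(\tau_P)_s(X_s) = (\tau_P)_s(\emptyset) = 1$, and the bound $\tau_s \le (\tau_P)_s$; there is no deep combinatorial obstruction.
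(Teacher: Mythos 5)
Your proof is correct, and it splits into two comparisons. For part (1) you are doing essentially what the paper does, but one level cleaner: the paper unfolds $\bigl(\prod_{s}(\tau_{P})_{s}\bigr)(U)$ through the decomposition $U=\bigcup_{\lambda}B_{\lambda}$, $B_{\lambda}=\bigcap\Phi_{\lambda}$, and runs the same case analysis ($t\notin S(\Phi_{\lambda})$ gives $X_{t}$, $t\in S(\Phi_{\lambda})$ gives $V_{t}$, the empty case gives value $1$), whereas you invoke Lemma 3.1 at the outset to reduce to pre-basic $V$; the content is the same, and your reduction removes the outer union. For part (2) you take a genuinely different route: the paper quotes Ying's result that projections are continuous ([25, Lemma 3.1]) together with $C(f)\leq C_{P}(f)$ ([2, Theorem 6.3]), while you observe that $p_{s}^{-1}(B)$ already lies in the pre-subbase, so the singleton $\Phi=\{p_{s}^{-1}(B)\}$ gives $\beta_{P}(p_{s}^{-1}(B))\geq(\tau_{P})_{s}(B)\geq\tau_{s}(B)$ and hence $\beta_{P}^{(\cup)}(p_{s}^{-1}(B))\geq\tau_{s}(B)$. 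Your version is self-contained and is phrased directly in terms of the product pre-topology $\prod_{s}(\tau_{P})_{s}$ that Section 3 actually uses as the domain's pre-open family, whereas the paper's citation chain technically establishes pre-openness relative to $(\prod_{s}\tau_{s})_{P}$, a slightly different object; the paper's version buys brevity by outsourcing the work. The one point you must keep explicit in (1) is the one you already flag: $p_{s}(V)=U_{s}$ (resp.\ $X_{s}$) only for $V\neq\emptyset$, with the empty case absorbed by $(\tau_{P})_{s}(\emptyset)=1$.
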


\begin{proof}
(1) For any $t\in S,$ we have $$O_{P}(p_{t})= \underset{U\in P(\underset{%
s\in S}{\prod }X_{s})}{\bigwedge }\min (1,1-\left( \underset{s\in S}{\prod }%
(\tau _{P})_{s}\right) (U)+(\tau _{P})_{t}\left( p_{t}(U)\right) ).$$ Then it
suffices to show that for any $U\in P(\underset{s\in S}{\prod }X_{s}),$ we
have $$(\tau _{P})_{t}\left( p_{t}(U)\right) \geq \left( \underset{s\in S}{%
\prod }(\tau _{P})_{s}\right) (U).$$ \newline
Assume $$\left( \underset{s\in S}{\prod }(\tau _{P})_{s}\right) (U)=\underset{%
\underset{\lambda \in \Lambda }{\cup }B_{\lambda }=U}{\bigvee }\ \underset{%
\lambda \in \Lambda }{\bigwedge }\ \underset{\Phi _{\lambda }\Subset \varphi
,\cap \Phi _{\lambda }=B_{\lambda }}{\bigvee }\ \underset{s\in S(\Phi
_{\lambda })}{\bigwedge }(\tau _{P})_{s}(V_{s})>\mu $$ where $$\Phi _{\lambda
}=\{p_{s}^{-1}(V_{s}):s\in S(\Phi _{\lambda })\}(\lambda \in \Lambda ).$$
\newline
Hence there exists $\{B_{\lambda }:\lambda \in \Lambda \}\subseteq P(%
\underset{s\in S}{\prod }X_{s})$ such that $\underset{\lambda \in \Lambda }{%
\bigcup }B_{\lambda }=U$ and furthermore, for any $\lambda \in \Lambda $,
there exists $\Phi _{\lambda }\Subset \varphi $ such that $\cap \Phi
_{\lambda }=B_{\lambda }$ and $\underset{s\in S(\Phi _{\lambda })}{\bigcap }%
p_{s}^{-1}(V_{s})=B_{\lambda },$ where for any $s\in S(\Phi _{\lambda })$ we
have $(\tau _{P})_{s}(V_{s})>\mu .$ Thus $$p_{t}(U)=p_{t}(\underset{\lambda
\in \Lambda }{\bigcup }\ \underset{s\in S(\Phi _{\lambda })}{\bigcap }%
p_{s}^{-1}(V_{s})).$$ \newline
(1) If for any $\lambda \in \Lambda $, $\underset{s\in S(\Phi _{\lambda })}{%
\bigcap }p_{s}^{-1}(V_{s})=\phi ,$ then $U=\phi ,p_{t}(U)=\phi $ and $(\tau
_{P})_{t}\left( p_{t}(U)\right) =1.$ Therefore $(\tau _{P})_{t}\left(
p_{t}(U)\right) \geq \left( \underset{s\in S}{\prod }(\tau _{P})_{s}\right)
(U).$ \newline
(2) If there exists $\lambda _{\circ }\in \Lambda ,$ such that $\phi \neq
\underset{s\in S(\Phi _{\lambda })}{\bigcap }p_{s}^{-1}(V_{s})=B_{\lambda
_{\circ }},$ \newline
(i) If $t\notin S(\Phi _{\lambda _{\circ }}),$ i.e., $t\in S-S(\Phi
_{\lambda _{\circ }})$, $p_{t}(B_{\lambda _{\circ }})=X_{t}.$ Therefore $%
(\tau _{P})_{t}(p_{t}(B_{\lambda _{\circ }}))=(\tau _{P})_{t}(X_{t})=1.$
\newline
(ii) If $t\in S(\Phi _{\lambda _{\circ }})$, then $p_{t}(B_{\lambda _{\circ
}})=V_{t}\subseteq X_{t}$. Thus \newline
$$p_{t}(U)=p_{t}((\underset{t\in S(\Phi _{\lambda _{\circ }})}{\bigcup }%
B_{\lambda _{\circ }})\cup (\underset{t\notin S(\Phi _{\lambda _{\circ }})}{%
\bigcup }B_{\lambda _{\circ }}))=(\underset{t\in S(\Phi _{\lambda _{\circ
}})}{\bigcup }p_{t}(B_{\lambda _{\circ }}))\cup (\underset{t\notin S(\Phi
_{\lambda _{\circ }})}{\bigcup }p_{t}(B_{\lambda _{\circ }}))=V_{t}\cup
X_{t}=X_{t}.$$ \newline
Hence $$(\tau _{P})_{t}(p_{t}(U))=(\tau _{P})_{t}(X_{t})=1 \ {\rm or} \ (\tau
_{P})_{t}(p_{t}(U))=(\tau _{P})_{t}(V_{t})>\lambda .$$ \newline
Therefore $(\tau _{P})_{t}\left( p_{t}(U)\right) \geq \left( \underset{s\in S%
}{\prod }(\tau _{P})_{s}\right) (U).$ Thus $O_{P}(p_{t})=1.$ \newline
(2) From Lemma 3.1 in [25] we have $\vDash (\forall s)(s\in S\rightarrow
p_{s}\in C).$ Furthermore, for any two fuzzifying topological spaces $%
(X,\tau )$ and $(Y,\sigma )$ and $f\in Y^{X},$ we have $C(f)\leq C_{P}(f)$
(Theorem 6.3 in [2]). Therefore $\vDash (\forall s)(s\in S\rightarrow
p_{s}\in C_{P}).$
\end{proof}

\begin{theorem}
Let $\{(X_{s},\tau _{s}):s\in S\}$ be the family of fuzzifying topological
spaces, then \newline
$$\vDash \exists U(U\subseteq \underset{s\in S}{\prod }X_{s}\wedge \Gamma
_{P}(U,\tau /U)\wedge \exists x(x\in Int_{P}(U))\rightarrow \exists
T(T\Subset S\wedge \forall t(t\in S-T\wedge \Gamma _{P}(X_{t},\tau _{t}))).$$
\end{theorem}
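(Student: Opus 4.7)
My proof proposal follows the classical pattern: a strongly compact subset of a product with a pre-interior point forces all but finitely many factors to be strongly compact, because a basic pre-open neighborhood can constrain only finitely many coordinates. Write $\lambda$ for the truth value of the antecedent of the implication and fix an arbitrary $\varepsilon > 0$. Unpacking the antecedent produces a subset $U \subseteq \prod_{s \in S} X_s$ and a point $x$ with $\Gamma_P(U, \tau/U) \geq \lambda - \varepsilon$ and $N_x^P(U) \geq \lambda - \varepsilon$ (since $Int_P(U)(x) = N_x^P(U)$ by the duality used to define $Cl_P$).

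By Theorem 2.1, the family $\beta_P$ of Definition 3.1 is a pre-base of the product fuzzifying pre-topology $\prod_{s \in S}(\tau_P)_s$, so the defining inequality of a pre-base supplies a basic set $B$ with $x \in B \subseteq U$ and $\beta_P(B) \geq \lambda - 2\varepsilon$. By Definition 3.1 this means $B = \bigcap \Phi$ for some $\Phi \Subset \varphi$ with $(\tau_P)_s(V_s) \geq \lambda - 2\varepsilon$ for every $s \in S(\Phi)$. I would then set $T := S(\Phi)$, which is finite by construction; this is the $T$ required by the conclusion. For any $t \in S - T$ the coordinate $t$ is unconstrained in $B$, hence $p_t(B) = X_t$, and a fortiori $p_t(U) = X_t$, i.e.\ the restriction $p_t|_U$ surjects onto $X_t$.

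The remaining task is to transfer strong compactness across $p_t|_U$ to obtain $\Gamma_P(X_t, \tau_t) \geq \lambda - \varepsilon$. Given any fuzzy pre-open cover $\Re$ of $X_t$, I would pull it back to a fuzzy family $\wp$ on $U$ via $\wp(V) := \bigvee\{\Re(W) : V = p_t^{-1}(W) \cap U\}$, apply strong compactness of $U$ to extract a finite subcover of $\wp$, and push the result forward by the surjection $p_t|_U$ to a finite subcover of $\Re$, tracking the degrees of $K$ and $FF$ at every step. The principal obstacle is precisely this degree-preservation: one needs that $p_t$ pulls back pre-open-set degrees from $X_t$ to the product, i.e.\ a fuzzifying pre-irresoluteness statement for projections. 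I expect this to follow by combining Lemma 3.2 (namely $p_t \in O_P$ and $p_t \in C_P$) with the classical observation that an open continuous map pulls back pre-open sets to pre-open sets, but the fuzzy book-keeping around the $\otimes$ connective appearing in the definition of $\Gamma_P$ is what will require the most care. Letting $\varepsilon \to 0$ at the end yields the claimed inequality, finishing the argument.
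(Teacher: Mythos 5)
Your skeleton matches the paper's: unpack the antecedent to get $U$ and a point $x$ with $N_{x}^{P}(U)$ large, use the pre-base $\beta _{P}$ of the product pre-topology to find a basic set $B=\bigcap \Phi $ with $x\in B\subseteq U$ and $\Phi \Subset \varphi $, take $T=S(\Phi )$, and observe that $p_{t}(U)=X_{t}$ for $t\in S-T$. Up to $\varepsilon $-bookkeeping this is exactly how the paper begins. The problem is the last step, which you leave as a sketch and which is where the actual content lies. You propose to transfer $\Gamma _{P}$ from $U$ to $X_{t}$ by hand (pull back a cover, extract a finite subcover, push forward), and you identify the obstacle correctly --- one needs that $p_{t}$ pulls back pre-open-set degrees, i.e.\ a pre-irresoluteness statement --- but the route you suggest for it does not work as stated. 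Lemma 3.2 gives $p_{s}\in O_{P}$ (images of pre-opens are pre-open) and $p_{s}\in C_{P}$ (preimages of \emph{opens} are pre-open), whereas the classical fact you want to imitate ("a continuous open map pulls back pre-open sets to pre-open sets") uses continuity and openness in the ordinary sense; the hypotheses do not line up, and there is no argument in the paper or in your proposal that converts $O_{P}\wedge C_{P}$ into $I_{P}$.

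The missing idea is that no such detour is needed: by Definition 3.1 the product pre-topology is \emph{generated} by the sets $p_{s}^{-1}(U_{s})$, so $\bigl(\prod_{t\in S}(\tau _{P})_{t}\bigr)\bigl(p_{s}^{-1}(U_{s})\bigr)\geq (\tau _{P})_{s}(U_{s})$ holds outright, which gives $I_{P}(p_{s})=1$ with no estimate to track. The paper then invokes Theorem 9 of [1], $\vDash \Gamma _{P}(X,\tau )\otimes I_{P}(f)\rightarrow \Gamma _{P}(f(X))$, so that $\Gamma _{P}(X_{t},\tau _{t})=\Gamma _{P}(p_{t}(U),\tau _{t})\geq \Gamma _{P}(U,\tau /U)\otimes I_{P}(p_{t})=\Gamma _{P}(U,\tau /U)$ for every $t\in S-T$, and all the $\otimes $-bookkeeping you were worried about is absorbed into that one cited result. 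Without either citing that theorem or proving the cover-transfer in full (including the degree estimates for $K$ and $FF$), your argument is not complete; as written it records the right strategy for the first half and an unproven --- and, via $O_{P}\wedge C_{P}$, likely unprovable --- plan for the second.
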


\begin{proof}
It suffices to show that \newline
$$\underset{U\in P(\underset{s\in S}{\prod }X_{s})}{\bigvee }\left( \Gamma
_{P}(U,\tau /U)\wedge \underset{x\in \underset{s\in S}{\prod }X_{s}}{\bigvee
}N_{x}^{P}(U)\right) \leq \underset{T\Subset S}{\bigvee }\ \underset{t\in S-T%
}{\bigwedge }\Gamma _{P}(X_{t},\tau _{t}).$$ Indeed, if \newline
$$\underset{U\in P(\underset{s\in S}{\prod }X_{s})}{\bigvee }\left( \Gamma
_{P}(U,\tau /U)\wedge \underset{x\in \underset{s\in S}{\prod }X_{s}}{\bigvee
}N_{x}^{P}(U)\right) >\mu >0,$$ then there exists $U\in P(\underset{s\in S}{%
\prod }X_{s})$ such that $\Gamma _{P}(U,\tau /U)>\mu $ and $\underset{x\in
\underset{s\in S}{\prod }X_{s}}{\bigvee }N_{x}^{P}(U)>\mu ,$ where $$%
N_{x}^{P}(U)=\underset{x\in V\subseteq U}{\bigvee }\ \left( \underset{s\in S}%
{\prod }(\tau _{P})_{s}\right) (V).$$ Furthermore, there exists $V$ such that
$x\in V\subseteq U$ and \ $\left( \underset{s\in S}{\prod }(\tau
_{P})_{s}\right) (V)>\mu .$ Since $\beta _{P}$ is a pre-base of $\underset{%
s\in S}{\prod }(\tau _{P})_{s},$ \newline
$$\left( \underset{s\in S}{\prod }(\tau _{P})_{s}\right) (V)=\underset{%
\underset{\lambda \in \Lambda }{\cup }B_{\lambda }=V}{\bigvee }\ \underset{%
\lambda \in \Lambda }{\bigwedge }\ \beta _{P}(B_{\lambda })= \underset{%
\underset{\lambda \in \Lambda }{\cup }B_{\lambda }=V}{\bigvee }\ \underset{%
\lambda \in \Lambda }{\bigwedge }\ \underset{\Phi _{\lambda }\Subset \varphi
,\cap \Phi _{\lambda }=B_{\lambda }}{\bigvee }\ \underset{s\in S(\Phi
_{\lambda })}{\bigwedge }(\tau _{P})_{s}(V_{s})>\mu ,$$ \newline
where $\Phi _{\lambda }=\{p_{s}^{-1}(V_{s}):s\in S(\Phi _{\lambda
})\}(\lambda \in \Lambda ).$\newline
Hence there exists $\{B_{\lambda }:\lambda \in \Lambda \}\subseteq P(%
\underset{s\in S}{\prod }X_{s})$ such that $\underset{\lambda \in \Lambda }{%
\cup }B_{\lambda }=V.$ Furthermore, for any $\lambda \in \Lambda $, there
exists $\Phi _{\lambda }\Subset \varphi $ such that $\cap \Phi _{\lambda
}=B_{\lambda }$ and for any $s\in S(\Phi _{\lambda }),$ we have $(\tau
_{P})_{s}(V_{s})>\mu .$ Since $x\in V,$ there exists $B_{\lambda _{x}}$ such
that $x\in B_{\lambda _{x}}\subseteq V\subseteq U.$ Hence there exists $\Phi
_{\lambda _{x}}\Subset \varphi $ such that $\cap \Phi _{\lambda
_{x}}=B_{\lambda _{x}}$ and $\underset{s\in S(\Phi _{\lambda })}{\bigcap }%
p_{s}^{-1}(V_{s})=B_{\lambda _{x}}\subseteq \underset{s\in S}{\prod }X_{s}$
and for any $s\in S(\Phi _{\lambda }),$ we have $(\tau
_{P})_{s}(V_{s})>1-\mu .$ By $\underset{s\in S(\Phi _{\lambda })}{\bigcap }%
p_{s}^{-1}(V_{s})=B_{\lambda _{x}},$ we have $P_{\delta }(B_{\lambda
_{x}})=V_{\delta }\subseteq X_{\delta },$if $\delta \in S(\Phi _{\lambda
_{x}});$ $P_{\delta }(B_{\lambda _{x}})=X_{\delta },$if $\delta \in S-S(\Phi
_{\lambda _{x}}).$ Since $B_{\lambda _{x}}\subseteq U,$ for any $\delta \in
S-S(\Phi _{\lambda _{x}}),$ we have $P_{\delta }(U)\supseteq P_{\delta
}(B_{\lambda _{x}})=X_{\delta }$ and $P_{\delta }(U)=X_{\delta }.$ On the
other hand, since for any $s\in S$ and $U_{s}\in P(X_{s}),\ \left( \underset{%
t\in S}{\prod }(\tau _{P})_{t}\right) \left( p_{s}^{-1}(U_{s})\right) \geq
(\tau _{P})_{s}(U_{s}),$ we have , for any $s\in S,$ $$I_{P}(p_{s})=\underset{%
U_{s}\in P(X_{s})}{\bigwedge }\min \left( 1,1-(\tau _{P})_{s}(U_{s})+%
\underset{t\in S}{\prod }(\tau _{P})_{t}\left( p_{s}^{-1}(U_{s})\right)
\right) =1.$$ Furthermore, since by Theorem 9 in [1], we have $$\vDash \Gamma
_{P}(X,\tau )\otimes I_{P}(f)\rightarrow \Gamma _{P}(f(X)),$$ then $$\Gamma
_{P}(U,\tau /U)=\Gamma _{P}(U,\tau /U)\otimes I_{P}(p_{s})\leq \Gamma
_{P}(P_{\delta }(U),\tau _{\delta })=\Gamma _{P}(X_{\delta },\tau _{\delta
}).$$ Therefore $$\underset{T\Subset S}{\bigvee }\ \underset{t\in S-T}{%
\bigwedge }\Gamma _{P}(X_{t},\tau _{t})\geq \underset{\delta \in S-S(\Phi
_{\lambda })}{\bigwedge }\Gamma _{P}(X_{\delta },\tau _{\delta })\geq \Gamma
_{P}(U,\tau /U)>\mu .$$
\end{proof}

The above theorem is a generalization of the following corollary.

\begin{corollary}
If there exists a coordinate pre-neighborhood strong compact subset U of
some point $x\in X$ of the product space, then all except a finite number of
coordinate spaces are strong compact.
\end{corollary}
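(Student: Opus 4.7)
The plan is to deduce the corollary as the crisp specialization of Theorem 3.2. In the corollary, all topologies are ordinary (two-valued) topologies, so every fuzzifying predicate appearing in Theorem 3.2 reduces to its classical analogue: $\tau_P$ becomes the characteristic function of the family of pre-open sets, $\Gamma_P(U,\tau/U)$ takes value $1$ precisely when $U$ is strong compact, and $N_x^P(U)=1$ precisely when $U$ is a pre-neighborhood of $x$. Thus the hypothesis of the corollary — existence of a point $x\in\prod_{s\in S}X_s$ whose pre-neighborhood contains a strong compact set $U$ — makes the antecedent of Theorem 3.2 hold with degree $1$.

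My first step would therefore be to rewrite the implication displayed in Theorem 3.2 in this classical setting and note that, since $[\cdot]\in\{0,1\}$ throughout, the \L{}ukasiewicz implication $\min(1,1-a+b)$ collapses to ordinary Boolean implication. So from Theorem 3.2 we immediately obtain
\[
\bigvee_{T\Subset S}\ \bigwedge_{t\in S-T}\Gamma_P(X_t,\tau_t)=1,
\]
which in classical logic means: there exists a finite $T\Subset S$ such that $\Gamma_P(X_t,\tau_t)=1$ for every $t\in S-T$, i.e., $X_t$ is strong compact for all but finitely many indices $t$.

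The second step is to verify the translation of each ingredient honestly. In particular I would check (i) that the product fuzzifying $P$-topology defined in Definition 3.1 reduces to the usual product pre-topology generated by the pre-subbase $\{p_s^{-1}(U_s):s\in S,\ U_s\in\tau_{P,s}\}$; (ii) that $N_x^P$ reduces to the classical pre-neighborhood filter; and (iii) that the subspace construction $\tau_P/U$ behaves as the induced crisp pre-topology on $U$. These are routine identifications from the preliminaries, so they should present no difficulty.

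The only genuinely delicate point I anticipate is making sure the reduction of the consequent $\exists T(T\Subset S\wedge\forall t(t\in S-T\wedge\Gamma_P(X_t,\tau_t)))$ is interpreted correctly — the clause $t\in S-T\wedge\Gamma_P(X_t,\tau_t)$ sits under a universal quantifier, and in the crisp case the intended reading "for every $t\notin T$, $X_t$ is strong compact" is the correct one (the $\wedge$ is really an implication in disguise, as is common in the informal notation of fuzzifying topology). Once this reading is fixed, the corollary follows with no further work beyond invoking Theorem 3.2, so the proof proposal is essentially a one-line appeal to the theorem after the translation dictionary has been set up.
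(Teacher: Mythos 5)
Your proposal is correct and matches the paper's intent exactly: the paper offers no separate argument for this corollary, simply noting that Theorem 3.2 "is a generalization of" it, i.e., the corollary is the crisp ($\{0,1\}$-valued) specialization of the theorem. Your careful spelling-out of the translation dictionary (collapse of the \L ukasiewicz implication to Boolean implication, the reading of the consequent's quantifier structure) is exactly what the paper leaves implicit.
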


\begin{lemma}
For any fuzzifying topological space $(X,\tau ),A\subseteq X,$ \newline
$\vDash T_{2}^{P}(X,\tau )\rightarrow T_{2}^{P}(A,\tau _{/A}).$
\end{lemma}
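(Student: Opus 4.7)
The plan is to unfold both sides of the implication and reduce it to three sub-claims: a trivial domain-restriction observation, a comparison between pre-neighborhoods in $X$ and in the subspace $A$, and preservation of disjointness under intersection with $A$. Concretely, expanding the definition gives
\begin{eqnarray*}
[T_{2}^{P}(X,\tau)] &=& \underset{x\neq y\text{ in }X}{\bigwedge}\ \underset{B,C\subseteq X}{\bigvee}\min\bigl(N_{x}^{P}(B),\,N_{y}^{P}(C),\,[B\cap C\equiv\phi]\bigr),\\
{[T_{2}^{P}(A,\tau_{/A})]} &=& \underset{x\neq y\text{ in }A}{\bigwedge}\ \underset{B',C'\subseteq A}{\bigvee}\min\bigl((N_{x}^{P})_{/A}(B'),\,(N_{y}^{P})_{/A}(C'),\,[B'\cap C'\equiv\phi]\bigr),
\end{eqnarray*}
so the outer infimum on the right is over a subfamily (only pairs in $A$) of the infimum on the left. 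Hence it suffices to fix $x\neq y$ in $A$ and show that the corresponding supremum for the subspace dominates the one for the whole space.

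First, I would handle the easy piece: for any $B,C\subseteq X$ with $B\cap C=\phi$, the restricted sets $B':=B\cap A$ and $C':=C\cap A$ automatically satisfy $B'\cap C'=\phi$, so $[B'\cap C'\equiv\phi]=1$. Next, for the neighborhood comparison, I would prove the key inequality
$$(N_{x}^{P})_{/A}(B\cap A)\ \geq\ N_{x}^{P}(B)\quad\text{for every }x\in A,\ B\subseteq X.$$
Unfolding, this reduces to showing that whenever $x\in D\subseteq B$ with $D$ pre-open in $X$ to degree $\alpha=\tau_{P}(D)$, the set $D\cap A$ satisfies $x\in D\cap A\subseteq B\cap A$ and is pre-open in $(A,\tau_{/A})$ to degree at least $\alpha$, i.e.\ $(\tau_{/A})_{P}(D\cap A)\geq\tau_{P}(D)$. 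This is the standard fact that the restriction of a pre-open set is pre-open in the subspace, and in the fuzzifying setting it follows directly by unwinding $\tau_{P}(D)=\bigwedge_{x\in D}\mathrm{Int}(\mathrm{Cl}(D))(x)$ and observing that for $A$-points the relative interior and closure dominate the traces of the ambient ones.

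Combining the three pieces, for any pair $(B,C)$ contributing to the supremum in $[T_{2}^{P}(X,\tau)]$ the pair $(B\cap A,\,C\cap A)$ contributes at least as much to the supremum in $[T_{2}^{P}(A,\tau_{/A})]$, yielding the desired inequality. The main obstacle is the pre-open-set restriction step: unlike the ordinary open-set case, pre-openness is defined via interior of closure, and one must argue that both operations behave well under the subspace topology so that $(\tau_{/A})_{P}(D\cap A)\geq\tau_{P}(D)$. Once this technical lemma is in hand (it is implicit in the subspace conventions already used for $\Gamma_{P}(A,\tau/A)=\Gamma_{P}(A,\tau_{P}/A)$ in the preliminaries), the result follows by taking infima over $x\neq y\in A$.
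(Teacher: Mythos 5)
Your overall decomposition is the same as the paper's: unfold $T_{2}^{P}$ on both sides, observe that the infimum for the subspace runs over fewer pairs, that disjointness survives intersection with $A$, and that everything reduces to the single inequality $N_{x}^{P^{A}}(U\cap A)\geq N_{x}^{P^{X}}(U)$. The gap is in how you propose to prove that inequality. You reduce it to the claim that $(\tau_{/A})_{P}(D\cap A)\geq \tau_{P}(D)$, i.e.\ that the trace of a pre-open set is pre-open in the subspace, and you assert this ``follows directly by unwinding $\mathrm{Int}(\mathrm{Cl}(D))$'' because the relative interior and closure dominate the traces of the ambient ones. That claim is false if $(\tau_{/A})_{P}$ means the pre-open sets of the subspace topology: already in crisp topology, $D=\mathbb{Q}$ is pre-open in $\mathbb{R}$, but for $A=(\mathbb{R}\setminus\mathbb{Q})\cup\{0\}$ the trace $D\cap A=\{0\}$ has empty relative interior of its relative closure, so it is not pre-open in $A$. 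The relative closure of $D\cap A$ can be drastically smaller than $\mathrm{Cl}(D)\cap A$, which is exactly where the domination you invoke breaks down.

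The paper avoids this entirely because its subspace pre-structure is \emph{defined} as the trace: $\tau_{P}/A(B)=\bigvee_{B=V\cap A}\tau_{P}(V)$ and $N_{x}^{P^{A}}(U)=\bigvee_{x\in C\subseteq U}\tau_{P}/A(C)$. With that convention the key inequality is immediate: $x\in D\subseteq U$ gives $x\in D\cap A\subseteq U\cap A$ and $\tau_{P}/A(D\cap A)\geq\tau_{P}(D)$, and no statement about $\mathrm{Int}(\mathrm{Cl}(\cdot))$ in the subspace is needed. You gesture at this convention parenthetically, but your primary justification is the incorrect ``restriction of a pre-open set is pre-open'' route; to repair the proof you should replace that step by an explicit appeal to the trace definitions of $\tau_{P}/A$ and $N_{x}^{P^{A}}$, after which the rest of your argument coincides with the paper's.
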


\begin{proof}
\begin{eqnarray*} [T_{2}^{P}(X,\tau )]&=& \underset{x,y\in X,x\neq y}{\bigwedge }\ \underset{%
U,V\in P(X),U\cap V=\phi }{\bigvee }(N_{x}^{P}(U),N_{y}^{P}(V))\\ &\leq& \underset{x,y\in X,x\neq y}{%
\bigwedge }\ \underset{(U\cap A)\cap (V\cap A)=\phi }{\bigvee }%
(N_{x}^{P^{A}}(U\cap A),N_{y}^{P^{A}}(V\cap A))\\ &\leq& \underset{x,y\in A,x\neq y}{%
\bigwedge }\ \underset{U^{\prime }\cap V^{\prime }=\phi ,U^{\prime
},V^{\prime }\in P(A)}{\bigvee }(N_{x}^{P^{A}}(U^{\prime
}),N_{y}^{P^{A}}(V^{\prime }))\\ &=& T_{2}^{P}(A,\tau _{/A}),\end{eqnarray*}
\newline
where $$N_{x}^{P^{A}}(U)=\underset{x\in C\subseteq U}{\bigvee }\tau _{P}/A(C) \ {\rm and } \ \tau _{P}/A(B)=\underset{B=V\cap A}{\bigvee }\tau _{P}(V).$$
\end{proof}

\begin{lemma}
For any fuzzifying $P-$topological space $(X,\tau ),$ \newline
$\vDash T_{2}^{P}(X,\tau )\otimes \Gamma _{P}(X,\tau )\rightarrow
T_{4}^{P}(X,\tau ).$ \newline
For the definition of $T_{4}^{P}(X,\tau )$ see [3, Definition 3.1 ].
\end{lemma}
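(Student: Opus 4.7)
The plan is to adapt the classical argument that a compact Hausdorff space is normal, but working pointwise in the Lukasiewicz-valued semantics and exploiting the $P$-topology axiom $\tau_P(A\cap B)\geq \tau_P(A)\wedge \tau_P(B)$ (which makes finite intersections of pre-open sets behave just as in the classical case). The proof will proceed in two stages: first a pre-regularity intermediate lemma $\vDash T_2^P(X,\tau)\otimes \Gamma_P(X,\tau)\rightarrow T_3^P(X,\tau)$, then a second application of strong compactness to upgrade to $T_4^P$.

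For the first stage, fix a point $x\in X$ and a pre-closed set $F$ with $x\notin F$. For each $y\in F$ the pre-Hausdorff value $[T_2^P]$ furnishes pre-open neighbourhoods $B_y\in N_x^P$ and $C_y\in N_y^P$ with $B_y\cap C_y=\phi$, whose valuations are bounded below by $[T_2^P(X,\tau)]$ up to an arbitrarily small $\varepsilon$. The family $\{C_y:y\in F\}$ is then a pre-open cover of $F$ at level $[T_2^P]-\varepsilon$. Since $F$ is pre-closed and $X$ is fuzzifying pre-strongly compact, $F$ inherits pre-strong compactness of its subspace topology; this is where the identity $\Gamma_P(A,\tau/A)=\Gamma(A,\tau_P/A)$ stated in the introduction and the standard closed-subset-of-compact fact from the $\Gamma$ theory are used. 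Extract a finite subcover $C_{y_1},\ldots,C_{y_n}$; then $C:=\bigcup_{i\leq n}C_{y_i}\supseteq F$ is pre-open (unions) and $B:=\bigcap_{i\leq n}B_{y_i}\ni x$ is pre-open (iterating the $P$-topology finite-meet axiom $n-1$ times), with $B\cap C=\phi$. Tracking the Lukasiewicz bookkeeping across these steps yields $[T_3^P(X,\tau)]\geq [T_2^P(X,\tau)]\otimes [\Gamma_P(X,\tau)]$.

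For the second stage, repeat the same argument with $x$ replaced by a pre-closed set $E$ disjoint from $F$. For each $x\in E$, apply the pre-regularity just obtained to get disjoint pre-open sets $B_x\ni x$ and $C_x\supseteq F$. The family $\{B_x:x\in E\}$ pre-covers $E$; $E$ is pre-closed in the pre-strongly compact space $X$, hence pre-strongly compact, so choose a finite subcover $B_{x_1},\ldots,B_{x_m}$ and set $U:=\bigcup_{j\leq m}B_{x_j}\supseteq E$ and $V:=\bigcap_{j\leq m}C_{x_j}\supseteq F$. Again, $U$ is pre-open as a union, $V$ is pre-open by repeated use of the $P$-topology axiom, and $U\cap V=\phi$ by construction, which is exactly what the $T_4^P$ formula of [3, Definition 3.1] requires for the pair $(E,F)$.

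The main technical obstacle I anticipate is the Lukasiewicz bookkeeping during the finite-cover extraction: each invocation of strong compactness replaces an infinite family by a finite one whose minimum valuation must dominate $[T_2^P]+[\Gamma_P]-1$ (minus an $\varepsilon$), and this has to be done twice with the intermediate pre-regularity values passed through correctly. Concretely, one must verify, via Theorem~3.1 and the $FF$/$K_P$ apparatus, that the finite subcover extracted at level $t$ retains the value $t$ in the $\min$ of its members, so that the finite intersection $B$ or $V$ gets valuation at least $t$ from the $P$-topology condition. Once this degree-tracking is carried out carefully, the two-stage classical argument transports verbatim and gives the claimed implication.
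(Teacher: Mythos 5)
Your plan is essentially correct in outline, but it takes a genuinely different route from the paper. The paper does not redo the compactness argument at all: it quotes Theorem 10 of [1], which already asserts that in a pre-Hausdorff space two disjoint sets $A,B$ satisfy $T_2^P(X,\tau)\otimes(\Gamma_P(A)\wedge\Gamma_P(B))\leq\bigvee_{U\cap V=\phi,\,A\subseteq U,\,B\subseteq V}\min(\tau_P(U),\tau_P(V))$, combines this with Theorem 7 of [1] (pre-closed subsets of strongly compact spaces are strongly compact, giving $\Gamma_P(X,\tau)+[\tau_P(A^c)\wedge\tau_P(B^c)]-1\leq\Gamma_P(A)\wedge\Gamma_P(B)$), and finishes in three lines of {\L}ukasiewicz arithmetic. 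Your two-stage argument is, in effect, a from-scratch proof of the cited Theorem 10; what it buys is self-containedness, at the cost of reproving a result the paper treats as known. Note also that the paper runs the dependency in the opposite direction from you: it proves $T_4^P$ first and obtains $T_3^P$ (Lemma 3.5) by specializing $A=\{x\}$, whereas you establish pre-regularity first and feed it into the normality argument. Both orderings are legitimate.

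One caution on the bookkeeping, since it is the only place your plan could genuinely break. Your sentence that each extraction produces a finite family ``whose minimum valuation must dominate $[T_2^P]+[\Gamma_P]-1$ \ldots and this has to be done twice'' is dangerous if read additively: losing $1-[\Gamma_P]$ at each of the two extractions would yield only $[T_2^P]+2[\Gamma_P]-2$ plus the closedness degrees, which is strictly weaker than the target $[T_2^P]+\Gamma_P(X,\tau)+[\tau_P(A^c)\wedge\tau_P(B^c)]-2$ whenever $\Gamma_P(X,\tau)<1$. The argument is saved only by your second, correct observation: the members of the extracted finite subfamily already carry $\tau_P$-value at least $[T_2^P]-\varepsilon$ from the Hausdorff step, so the subcover \emph{retains} that level, and compactness is used purely as a threshold condition guaranteeing that a finite crisp subcover exists at all. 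One must then check separately that in the degenerate case where the threshold fails (i.e.\ $\Gamma_P(F)+[T_2^P]\leq 1$ or $\Gamma_P(E)+[T_2^P]\leq 1$), the required inequality for $T_4^P$ is vacuously true because its right-hand side is nonpositive. With that case split made explicit, your route closes; without it, the degree-tracking does not.
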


\begin{proof}
If $[T_{2}^{P}(X,\tau )\otimes \Gamma _{P}(X,\tau )]=0$, then the result
holds. Now, suppose that $[T_{2}^{P}(X,\tau )\otimes \Gamma _{P}(X,\tau
)]>\lambda >0.$ Then $T_{2}^{P}(X,\tau )+\Gamma _{P}(X,\tau )-1>\lambda >0$.
Therefore from Theorem 10 in [1]\newline
$T_{2}^{P}(X,\tau )\otimes (\Gamma _{P}(A)\wedge \Gamma _{P}(B))\wedge
(A\cap B=\phi )\vDash ^{ws}T_{2}^{P}(X,\tau )\rightarrow $ ($\exists
U)(\exists V)((U\in \tau _{P})\wedge (V\in \tau _{P})\wedge (A\subseteq
U)\wedge (B\subseteq V)\wedge (A\cap B=\phi )).$ Then for any $A,B\subseteq
X,A\cap B=\phi ,$ \newline
$$T_{2}^{P}(X,\tau )\otimes (\Gamma _{P}(A)\wedge \Gamma _{P}(B))\leq
\underset{U\cap V=\phi ,A\subseteq U,B\subseteq V}{\bigvee }\min (\tau
_{P}(U),\tau _{P}(V))$$
or equivalently $$T_{2}^{P}(X,\tau )\leq \Gamma _{P}(A)\wedge \Gamma
_{P}(B)\rightarrow \underset{U\cap V=\phi ,A\subseteq U,B\subseteq V}{%
\bigvee }\min (\tau _{P}(U),\tau _{P}(V))$$ \newline
Hence for any $A,B\subseteq X,A\cap B=\phi $, $$1-[\Gamma _{P}(A)\wedge
\Gamma _{P}(B)]+\underset{U\cap V=\phi ,A\subseteq U,B\subseteq V}{\bigvee }%
\min (\tau _{P}(U),\tau _{P}(V))+\Gamma _{P}(X,\tau )-1>\lambda .$$ From
Theorem 7 in [1] we have $$\vDash \Gamma _{P}(X,\tau )\otimes A\in \digamma
_{P}\rightarrow \Gamma _{P}(A).$$ Then \begin{eqnarray*} \Gamma _{P}(X,\tau )+[\tau
_{P}(A^{c})\wedge \tau _{P}(B^{c})]-1&=& (\Gamma _{P}(X,\tau )+\tau
_{P}(A^{c})-1)\wedge (\Gamma _{P}(X,\tau )+\tau _{P}(B^{c})-1)\\ &\leq&
(\Gamma _{P}(X,\tau )\otimes \tau _{P}(A^{c}))\wedge (\Gamma _{P}(X,\tau
)\otimes \tau _{P}(B^{c}))\\ &\leq& [\Gamma _{P}(A)\wedge \Gamma _{P}(B)].\end{eqnarray*}
Thus $$\Gamma _{P}(X,\tau )-[\Gamma _{P}(A)\wedge \Gamma _{P}(B)]-1\leq
-[\tau _{P}(A^{c})\wedge \tau _{P}(B^{c})].$$ So, $$1-[\tau _{P}(A^{c})\wedge
\tau _{P}(B^{c})]+\underset{U\cap V=\phi ,A\subseteq U,B\subseteq V}{\bigvee
}\min (\tau _{P}(U),\tau _{P}(V))>\lambda ,$$ i.e., $$T_{4}^{P}(X,\tau )=%
\underset{A\cap B=\phi }{\bigwedge }\min (1,1-[\tau _{P}(A^{c})\wedge \tau
_{P}(B^{c})]+ \underset{U\cap V=\phi ,A\subseteq U,B\subseteq V}{\bigvee }%
\min (\tau _{P}(U),\tau _{P}(V)))>\lambda .$$
\end{proof}

The above lemma is a generalization of the following corollary.

\begin{corollary}
Every strong compact pre-Hausdorff topological space is pre-normal.
\end{corollary}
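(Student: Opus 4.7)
The plan is to derive this crisp corollary as a specialization of Lemma 3.3 to the two-valued case. I will view a classical strongly compact, pre-Hausdorff topological space $(X,\tau)$ as a fuzzifying topological space by taking $\tau$ to be the characteristic function of its family of open sets. Under this identification, every derived fuzzy predicate takes values in $\{0,1\}$: in particular $\tau_P$, $\digamma_P$, and $N_x^P$ become the characteristic functions of the classical pre-open sets, pre-closed sets, and pre-neighborhood systems, respectively, and $(X,\tau)$ is automatically a fuzzifying $P$-topological space since intersection of pre-open sets in the crisp sense is pre-open.

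Next I will verify that the two hypotheses of the lemma evaluate to $1$. Classical pre-Hausdorffness directly witnesses the inner existential $\exists B\exists C$ in the definition of $T_2^P$ with disjoint crisp pre-neighborhoods of any two distinct points, so $[T_2^P(X,\tau)] = 1$. Classical strong compactness says that every pre-open cover admits a finite subcover; this makes the defining formula of $\Gamma_P$ true in the crisp reading (and it agrees with condition (b) of Corollary 3.1 applied with $\varphi_P = \tau_P$), giving $[\Gamma_P(X,\tau)] = 1$. Since the {\L}ukasiewicz tensor $\otimes$ coincides with $\wedge$ on $\{0,1\}$, we have $[T_2^P(X,\tau) \otimes \Gamma_P(X,\tau)] = 1$.

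Applying Lemma 3.3 then yields $[T_4^P(X,\tau)] = 1$. The last step is to unwind the definition of $T_4^P$ from [3, Definition 3.1] under the two-valued specialization: this asserts that for every pair of disjoint pre-closed sets $A, B \subseteq X$ there exist disjoint pre-open sets $U, V \subseteq X$ with $A \subseteq U$ and $B \subseteq V$. This is precisely the classical notion of pre-normality, and the corollary follows.

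The only real point requiring care, and the main (minor) obstacle, is checking that each defining formula in Lemma 3.3 does reduce to its classical counterpart under the two-valued specialization. This is routine because all the suprema and infima occurring in the definitions collapse to $\max$ and $\min$ over $\{0,1\}$, and Łukasiewicz connectives $\otimes,\rightarrow$ agree with $\wedge,\Rightarrow$ on $\{0,1\}$; hence each fuzzifying predicate evaluates to $1$ exactly when its classical analogue holds.
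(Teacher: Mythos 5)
Your overall route is exactly the paper's: the paper offers no separate argument for this corollary, presenting it only as the crisp ($\{0,1\}$-valued) instance of Lemma 3.3, and your unwinding of $T_2^P$, $\Gamma_P$, $\otimes$ and $T_4^P$ under the two-valued specialization is correct. However, there is one genuine error in your verification of the hypotheses. You claim that $(X,\tau)$ is ``automatically a fuzzifying $P$-topological space since intersection of pre-open sets in the crisp sense is pre-open.'' That is false: pre-open sets are closed under arbitrary unions but \emph{not} under finite intersections. For example, in $\mathbb{R}$ with the usual topology both $\mathbb{Q}$ and $(\mathbb{R}\setminus\mathbb{Q})\cup\{0\}$ are dense, hence pre-open, but their intersection $\{0\}$ satisfies $Int(Cl(\{0\}))=\emptyset$ and so is not pre-open. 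Thus the crisp condition $\tau_P(A\cap B)\geq\tau_P(A)\wedge\tau_P(B)$ is a genuine additional hypothesis, and Lemma 3.3 is stated only for fuzzifying $P$-topological spaces, so your appeal to it does not go through for an arbitrary strongly compact pre-Hausdorff space.

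To repair this you must either add the hypothesis that pre-open sets of $X$ are closed under finite intersections (the crisp reading of ``fuzzifying $P$-topological space''), or give an independent classical argument that avoids Lemma 3.3. Note that the paper's own statement of the corollary omits this hypothesis as well, so the imprecision is inherited from the source; but as a self-contained proof your step asserting the $P$-topological property is the one place the argument actually fails.
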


\begin{lemma}
For any fuzzifying $P-$topological space $(X,\tau ),$ \newline
$\vDash T_{2}^{P}(X,\tau )\otimes \Gamma _{P}(X,\tau )\rightarrow
T_{3}^{P}(X,\tau ).$ For the definition of \newline
$T_{3}^{P}(X,\tau )$ see [3, Definition 3.1 ].
\end{lemma}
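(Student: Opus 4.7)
The plan is to mirror the proof of Lemma 3.4 almost verbatim, exploiting the fact that one of the two separated sets can now be taken to be a singleton. First I would recall from [3, Definition 3.1] the expected algebraic form
$$T_3^P(X,\tau) = \underset{x \notin A}{\bigwedge} \min\Big(1,\, 1-\tau_P(A^c) + \underset{U \cap V = \phi,\, x \in U,\, A \subseteq V}{\bigvee} \min(\tau_P(U),\tau_P(V))\Big),$$
which is the natural ``point versus pre-closed set'' analogue of the $T_4^P$ expression that appears at the end of Lemma 3.4.

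As in Lemma 3.4, if $[T_2^P(X,\tau) \otimes \Gamma_P(X,\tau)] = 0$ the claim is immediate, so I would suppose $[T_2^P(X,\tau) \otimes \Gamma_P(X,\tau)] > \lambda > 0$. The essential observation is that any singleton is trivially strongly compact, hence $\Gamma_P(\{x\}) = 1$ for every $x \in X$. This lets me feed a singleton into the machinery that was already used for $T_4^P$.

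Next I would invoke Theorem 10 of [1], which in the proof of Lemma 3.4 yielded, for disjoint $A, B \subseteq X$,
$$T_2^P(X,\tau) \otimes (\Gamma_P(A) \wedge \Gamma_P(B)) \leq \underset{U \cap V = \phi,\, A \subseteq U,\, B \subseteq V}{\bigvee} \min(\tau_P(U), \tau_P(V)).$$
Setting $A = \{x\}$ and letting $B$ be the given subset of $X$ with $x \notin B$, the left-hand side collapses to $T_2^P(X,\tau) \otimes \Gamma_P(B)$ because $\Gamma_P(\{x\}) = 1$. Then, exactly as in Lemma 3.4, Theorem 7 of [1] gives $\Gamma_P(X,\tau) \otimes (B \in \digamma_P) \leq \Gamma_P(B)$, i.e.\ $\Gamma_P(X,\tau) + \tau_P(B^c) - 1 \leq \Gamma_P(B)$.

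Finally I would combine these two Łukasiewicz inequalities in the same manner as the last display of the Lemma 3.4 proof, so that $\Gamma_P(X,\tau) + \tau_P(B^c) - 1$ replaces $\Gamma_P(B)$ and, after rearranging, one obtains
$$1 - \tau_P(B^c) + \underset{U \cap V = \phi,\, x \in U,\, B \subseteq V}{\bigvee}\min(\tau_P(U), \tau_P(V)) > \lambda$$
for every $B$ with $x \notin B$ and every $x \in X$; taking the infimum over all such pairs gives $T_3^P(X,\tau) > \lambda$. The only real obstacle is bookkeeping: one must verify that the form of $T_3^P$ in [3] genuinely matches the expression produced by this chain, and that Theorem 10 of [1] applies without modification when one of the two pre-compact sets is a singleton. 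Once those points are checked, essentially no new ideas beyond Lemma 3.4 are required.
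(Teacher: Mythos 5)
Your proposal is correct and is essentially the paper's own proof: the paper disposes of this lemma in one line by setting $A=\{x\}$ in the preceding $T_4^P$ lemma, and your argument is exactly that specialization written out in detail (using $\Gamma_P(\{x\})=1$ to collapse the left-hand side and then repeating the Theorem 7/Theorem 10 combination from Lemma 3.4).
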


\begin{proof}
Immediate, set $A=\{x\}$\ in the above lemma.
\end{proof}

The above lemma is a generalization of the following corollary.

\begin{corollary}
Every strong compact pre-Hausdorff topological space is pre-regular.
\end{corollary}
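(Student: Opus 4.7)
The plan is to obtain this corollary as the classical, crisp specialization of the preceding fuzzifying lemma $\vDash T_2^P(X,\tau)\otimes\Gamma_P(X,\tau)\rightarrow T_3^P(X,\tau)$, exactly as the pre-normality corollary was obtained from the $T_4^P$-version. I would begin by noting that every ordinary topological space is a fuzzifying topological space in which the membership degrees of $\tau_P$, $\digamma_P$, $T_2^P$, $\Gamma_P$, and $T_3^P$ all collapse to $\{0,1\}$; on this restricted range the {\L}ukasiewicz tensor $\otimes$ reduces to classical conjunction and the implication arrow to classical implication.

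Next I would translate the hypotheses through this dictionary: strong compactness of $(X,\tau)$ amounts to $\Gamma_P(X,\tau)=1$ and pre-Hausdorffness amounts to $T_2^P(X,\tau)=1$, so $[T_2^P(X,\tau)\otimes\Gamma_P(X,\tau)]=1$. Feeding this into the previous lemma forces $T_3^P(X,\tau)=1$, which by the definition in [3, Definition 3.1] is precisely the statement that $(X,\tau)$ is pre-regular in the classical sense. This gives the corollary immediately.

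As a more concrete alternative that mimics the one-line proof of the lemma ("set $A=\{x\}$"), I would use Corollary 3.3 directly. Given $x\in X$ and a pre-closed set $B\subseteq X$ with $x\notin B$, set $A=\{x\}$. The singleton $\{x\}$ is trivially strong compact, and $B$ is strong compact because pre-closed subsets of a strong compact space are strong compact (Theorem 7 of [1]). Since $A\cap B=\emptyset$ and the ambient space is pre-Hausdorff and strong compact, Corollary 3.3 supplies disjoint pre-open sets $U\supseteq\{x\}$ and $V\supseteq B$, which is exactly pre-regularity.

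I do not anticipate any real obstacle: the work is essentially a definitional translation between the fuzzifying and the crisp settings, and the only bookkeeping to double-check is that the crisp specialization of $T_3^P$ in [3] matches the classical definition of pre-regularity being used. Once that alignment is noted, the corollary is a one-step consequence of the preceding lemma.
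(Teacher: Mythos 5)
Your main route is exactly the paper's: the corollary is presented as the crisp specialization of the preceding lemma $\vDash T_{2}^{P}(X,\tau )\otimes \Gamma _{P}(X,\tau )\rightarrow T_{3}^{P}(X,\tau )$, which the paper in turn obtains by setting $A=\{x\}$ in the $T_{4}^{P}$ lemma, so your first argument is correct and needs nothing further. One small slip in your alternative: what you invoke as ``Corollary 3.3'' (pre-normality, which separates disjoint \emph{pre-closed} sets) is really the separation of disjoint \emph{strong compact} sets by disjoint pre-open sets (Theorem 10 of [1], used in the proof of the $T_{4}^{P}$ lemma) --- with that citation corrected, your alternative is just the crisp form of the same ``set $A=\{x\}$'' argument.
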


\begin{theorem}
For any fuzzifying topological space $(X,\tau )$ and $A\subseteq X,$\newline
$\vDash T_{2}^{P}(X,\tau )\otimes \Gamma _{P}(A)\rightarrow A\in \digamma
_{P}.$
\end{theorem}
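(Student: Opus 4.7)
The plan is to show the Łukasiewicz inequality $[A\in \digamma_P]=\tau_P(X-A)\ \geq\ [T_2^P(X,\tau)]\otimes [\Gamma_P(A)]$, which is exactly what the target formula asserts. Using the identity $\tau_P(V)=\bigwedge_{x\in V}N_x^P(V)$ that was derived inside the proof of Theorem~2.1, it is enough to bound $N_x^P(X-A)$ uniformly in $x\notin A$ and then take the infimum. The overall strategy mimics the classical argument: cover $A$ by pre-open sets coming from pairs of pre-Hausdorff separations of $x$ from each $y\in A$, extract a finite subcover via $\Gamma_P(A)$, and intersect the corresponding neighborhoods of $x$.

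Fix $x\notin A$ and $\varepsilon>0$, and write $\alpha:=[T_2^P(X,\tau)]$. For each $y\in A$, pre-Hausdorffness produces $B_y,C_y\in P(X)$ with $y\in B_y$, $x\in C_y$, $B_y\cap C_y=\phi$, and $\tau_P(B_y)\wedge \tau_P(C_y)>\alpha-\varepsilon$. Define $\Re_{\circ}\in \Im(P(X))$ by $\Re_{\circ}(B):=\bigvee\{\tau_P(B_y):y\in A,\ B_y=B\}$. Then $[\Re_{\circ}\subseteq \tau_P]=1$ and, because every $y\in A$ is covered by its own $B_y$ with value $>\alpha-\varepsilon$, we get $[K(\Re_{\circ},A)]\geq \alpha-\varepsilon$; hence $[K_P(\Re_{\circ},A)]\geq \alpha-\varepsilon$.

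Apply $\Gamma_P(A)$ to the cover $\Re_{\circ}$: from the defining inequality of $\Gamma_P$ one obtains $\bigvee_{\wp\leq \Re_{\circ}}[K(\wp,A)\otimes FF(\wp)]\geq [\Gamma_P(A)]+(\alpha-\varepsilon)-1$. So for any $\eta>0$ there exists $\wp\leq \Re_{\circ}$ whose strong $\delta$-cut $\wp_\delta$ (for some $\delta$ close to $1-[FF(\wp)]$) is a finite family that still covers $A$ up to value $[\Gamma_P(A)]+\alpha-1-\varepsilon-\eta$; this is precisely the extraction procedure used in step~(2) of the proof of Theorem~3.1 above. The cut $\wp_\delta$ is supported in $\{B_y:y\in A\}$, so we may label its elements $B_{y_1},\ldots,B_{y_n}$. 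Set $C:=\bigcap_{i=1}^n C_{y_i}$; then $x\in C$, and $C\cap B_{y_i}=\phi$ for each $i$ forces $C\cap A\subseteq C\cap \bigcup_i B_{y_i}=\phi$, i.e.\ $C\subseteq X-A$.

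Since the ambient space is fuzzifying $P$-topological (the same standing hypothesis as in Lemmas~3.3--3.4, which is needed here because we close $\tau_P$ under finite intersection), $\tau_P(C)\geq \bigwedge_{i=1}^n \tau_P(C_{y_i})>\alpha-\varepsilon$. Therefore $N_x^P(X-A)\geq \tau_P(C)>\alpha-\varepsilon$, and combining this with the cover estimate through the Łukasiewicz tensor $\otimes$ and letting $\varepsilon,\eta\downarrow 0$ yields $N_x^P(X-A)\geq [T_2^P]\otimes [\Gamma_P(A)]$. Taking the infimum over $x\in X-A$ gives $\tau_P(X-A)\geq [T_2^P(X,\tau)]\otimes [\Gamma_P(A)]$, which is the required validity. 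The main obstacle is the simultaneous bookkeeping of two $\otimes$-contributions --- the $T_2^P$-value controlling $\tau_P(C)$ and the $\Gamma_P(A)$-value controlling the finiteness of the subcover --- through the strong-cut trick on $\wp$; apart from that the combinatorics is routine and parallels the classical (crisp) proof.
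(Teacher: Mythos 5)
Your overall strategy coincides with the paper's: for each $x\in A^{c}$ produce a pre-open set containing $x$, disjoint from $A$, whose $\tau_{P}$-degree is at least $[T_{2}^{P}(X,\tau )\otimes \Gamma _{P}(A)]$, and then pass to $\tau _{P}(A^{c})$ by the arbitrary-union inequality (the paper does this via $\tau _{P}(A^{c})=\tau _{P}(\bigcup_{x}H_{x})\geq \bigwedge_{x}\tau _{P}(H_{x})$; you do it via $N_{x}^{P}(X-A)$ and the identity $\tau _{P}(V)=\bigwedge_{x\in V}N_{x}^{P}(V)$ --- these are equivalent). The difference is that the paper obtains the separating set $H_{x}$ in one stroke by citing Theorem 10 of [1] (separation of two disjoint sets of positive $\Gamma _{P}$-degree in a pre-Hausdorff space), applied with the second set taken to be the singleton $\{x\}$, for which $\Gamma _{P}(\{x\})=1$; you instead re-prove that separation lemma inline (point-by-point separation, fuzzy finite subcover, intersection of the finitely many neighborhoods of $x$).

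The one substantive issue is the hypothesis you import at the intersection step. To conclude $\tau _{P}\bigl(\bigcap_{i=1}^{n}C_{y_{i}}\bigr)\geq \bigwedge_{i}\tau _{P}(C_{y_{i}})$ you explicitly assume that $(X,\tau )$ is a fuzzifying $P$-topological space, but the theorem is stated for an arbitrary fuzzifying topological space, so as written you prove a weaker statement than the one claimed. (The paper's proof hides this point inside the citation of Theorem 10 of [1]; whether that theorem itself carries the $P$-topological hypothesis is not visible from the present text, but your argument makes the dependence explicit, and without closure of $\tau _{P}$ under finite intersections it genuinely breaks at that step.) Two smaller points: $\Gamma _{P}(A)=\Gamma (A,\tau _{P}/A)$ is a statement about covers of the subspace, so your cover $\Re _{\circ }$ must be relativized (replace $B_{y}$ by $B_{y}\cap A$ and use $\tau _{P}/A(B_{y}\cap A)\geq \tau _{P}(B_{y})$) before $\Gamma _{P}(A)$ can be applied to it; and the extraction of a finite subcover from $[K(\wp ,A)\otimes FF(\wp )]>0$ needs the explicit inequality $\inf \{\delta :F(\wp _{\delta })\}<K(\wp ,A)$ to guarantee that the finite cut $\wp _{\delta }$ still covers $A$ --- you gesture at this but do not write it down. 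Both of these are repairable.
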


\begin{proof}
For any$\{x\}\subset A^{c},$ we have $\{x\}\cap A=\phi $ and $\Gamma
_{P}(\{x\})=1.$ By Theorem 10 in [1] \newline
$$[T_{2}^{P}(X,\tau )\otimes (\Gamma _{P}(A)\wedge \Gamma _{P}(\{x\}))]\leq
\underset{G\cap H_{x}=\phi ,A\subseteq G,x\in H_{x}}{\bigvee }\min (\tau
_{P}(G),\tau _{P}(H_{x}))).$$ Assume $$\beta _{x}=\{H_{x}:A\cap H_{x}=\phi
,x\in H_{x}\},\ \underset{x\in X\backslash A}{\bigcup }f(x)\supseteq A^{c}$$
and $$\underset{x\in A^{c}}{\bigcup }f(x)\cap A=\underset{x\in A^{c}}{\bigcup
}(f(x)\cap A)=\phi .$$ So, $\underset{x\in A^{c}}{\bigcup }f(x)=A^{c}.$%
\newline
Therefore \begin{eqnarray*} [T_{2}^{P}(X,\tau )\otimes \Gamma _{P}(A)]&\leq& \underset{G\cap
H_{x}=\phi ,A\subseteq G,x\in H_{x}}{\bigvee }\tau _{P}(H_{x}) \\ &\leq&
\underset{x\in A^{c}}{\bigwedge }\ \underset{A\cap H_{x}=\phi ,x\in H_{x}}{%
\bigvee }\tau _{P}(H_{x})\\ &=& \underset{f\in \underset{x\in A^{c}}{\prod }\beta _{x}}{\bigvee }\
\underset{x\in A^{c}}{\bigwedge }\tau _{P}(f(x)) \\ &\leq& \underset{f\in
\underset{x\in A^{c}}{\prod }\beta _{x}}{\bigvee }\ \tau _{P}(\underset{x\in
A^{c}}{\bigcup }f(x)) \\ &=& \underset{f\in \underset{x\in X\backslash A}{\prod }%
\beta _{x}}{\bigvee }\ \tau _{P}(A^{c}) = \digamma _{P}(A).\end{eqnarray*}
\end{proof}

The above theorem is a generalization of the following corollary.

\begin{corollary}
Strong compact subspace of a pre-Hausdorff topological space is pre-closed.
\end{corollary}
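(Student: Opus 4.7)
The plan is to deduce this from the separation version of strong compactness for disjoint strongly compact subsets, which appears as Theorem 10 in [1] and is used in Lemma 3.2 above: for any disjoint $A,B\subseteq X$,
\[[T_{2}^{P}(X,\tau)\otimes(\Gamma_{P}(A)\wedge\Gamma_{P}(B))]\leq\bigvee_{G\cap H=\phi,\,A\subseteq G,\,B\subseteq H}\min(\tau_{P}(G),\tau_{P}(H)).\]
Specializing $B$ to a singleton $\{x\}$ with $x\in A^{c}$ is the natural move, because singletons are strongly compact to degree $1$ (the trivial cover argument), so $\Gamma_{P}(\{x\})=1$ and the $B$-factor disappears.

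First, I would fix $x\in A^{c}$ and apply the above with $B=\{x\}$, obtaining
\[[T_{2}^{P}(X,\tau)\otimes\Gamma_{P}(A)]\leq\bigvee_{G\cap H_{x}=\phi,\,A\subseteq G,\,x\in H_{x}}\min(\tau_{P}(G),\tau_{P}(H_{x}))\leq\bigvee_{A\cap H_{x}=\phi,\,x\in H_{x}}\tau_{P}(H_{x}),\]
by simply forgetting the $G$-side (which is a safe upper bound since $A\subseteq G$ and $G\cap H_{x}=\phi$ force $A\cap H_{x}=\phi$). Since the left-hand side does not depend on $x$, I may take the infimum over all $x\in A^{c}$, which keeps the inequality valid.

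The key manoeuvre is then the standard choice-function swap: letting $\beta_{x}=\{H_{x}\subseteq X:A\cap H_{x}=\phi,\ x\in H_{x}\}$, one has
\[\bigwedge_{x\in A^{c}}\bigvee_{H_{x}\in\beta_{x}}\tau_{P}(H_{x})=\bigvee_{f\in\prod_{x\in A^{c}}\beta_{x}}\bigwedge_{x\in A^{c}}\tau_{P}(f(x)),\]
which is the same distributive identity used in Theorem 2.1. Then, using super-additivity of $\tau_{P}$ over unions (Theorem 3.1(1)(b) in [2], as quoted already), $\bigwedge_{x}\tau_{P}(f(x))\leq\tau_{P}\bigl(\bigcup_{x\in A^{c}}f(x)\bigr)$.

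Finally, every admissible $f$ satisfies $x\in f(x)$ (so $\bigcup_{x\in A^{c}}f(x)\supseteq A^{c}$) and $f(x)\cap A=\phi$ (so $\bigcup_{x\in A^{c}}f(x)\subseteq A^{c}$); consequently $\bigcup_{x\in A^{c}}f(x)=A^{c}$ for every $f$. Hence the whole chain is bounded above by $\tau_{P}(A^{c})=\digamma_{P}(A)$, which is the desired conclusion. The main (only real) obstacle is verifying that the bookkeeping with the choice function is correctly justified in the fuzzifying setting; everything else is just packaging the classical argument that $A^{c}$ is the union of pre-open neighbourhoods of its points avoiding $A$.
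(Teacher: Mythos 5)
Your proposal is correct and follows essentially the same route as the paper: the corollary is the crisp specialization of Theorem 3.3, and your argument (Theorem 10 of [1] applied with $B=\{x\}$, $\Gamma_{P}(\{x\})=1$, dropping the $G$-side, the infimum over $x\in A^{c}$, the choice-function swap over $\beta_{x}$, and super-additivity of $\tau_{P}$ to land on $\tau_{P}(A^{c})=\digamma_{P}(A)$) is precisely the paper's proof of that theorem. The choice-function identity you flag as the only real concern is justified since $[0,1]$ is completely distributive, exactly as in the paper's Theorem 2.1.
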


\section{Fuzzifying \textbf{locally strong compactness }}

\begin{definition}
Let $\Omega $ be a class of fuzzifying topological spaces. A unary fuzzy
predicate $L_{P}C\in \Im (\Omega ),$ called fuzzifying locally strong
compactness, is given as follows:\newline
$$(X,\tau )\in L_{P}C:=(\forall x)(\exists B)((x\in Int_{P}(B)\otimes \Gamma
_{P}(B,\tau /B)).$$ Since $[x\in Int_{P}(X)]=N_{x}^{P}(X)=1,$ then $%
L_{P}C(X,\tau )\geq \Gamma _{P}(X,\tau ).$ Therefore, $\vDash (X,\tau )\in
\Gamma _{P}\rightarrow (X,\tau )\in L_{P}C.$\newline
Also, since $\vDash (X,\tau )\in \Gamma \rightarrow (X,\tau )\in LC$ [21]and
$\vDash (X,\tau )\in \Gamma _{P}\rightarrow (X,\tau )\in \Gamma $ [1], $%
\vDash (X,\tau )\in \Gamma _{P}\rightarrow (X,\tau )\in LC$.
\end{definition}

\begin{theorem}
For any fuzzifying topological space $(X,\tau )$ and $A\subseteq X,$\newline
$\vDash (X,\tau )\in L_{P}C\otimes A\in F_{P}\rightarrow (A,\tau /A)\in
L_{P}C.$
\end{theorem}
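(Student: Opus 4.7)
The plan is to show, for each $x\in A$, that the witness $B\subseteq X$ which realizes local strong compactness of $X$ at $x$ can be replaced by $B\cap A\subseteq A$ in the subspace $(A,\tau/A)$, with only a loss controlled by the pre-closedness degree $F_P(A)$. Concretely, we want to establish
$$L_{P}C(A,\tau /A)\;\geq\;L_{P}C(X,\tau)\otimes F_{P}(A),$$
which by the \L ukasiewicz implication $[P]\le[\varphi\to\psi]\Leftrightarrow[P]\otimes[\varphi]\le[\psi]$ is exactly the statement of the theorem.

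First I would record two elementary subspace inequalities. For any $x\in A$ and any $C\subseteq X$ with $x\in C\subseteq B$, one has $x\in C\cap A\subseteq B\cap A$ and, by the formula $\tau_P/A(D)=\bigvee_{D=V\cap A}\tau_P(V)$ taking $V=C$, $\tau_P/A(C\cap A)\ge\tau_P(C)$. Taking the supremum over such $C$ yields
$$[x\in Int_{P}^{A}(B\cap A)]\;=\;N_{x}^{P^{A}}(B\cap A)\;\ge\;N_{x}^{P}(B)\;=\;[x\in Int_{P}(B)].$$
Next, taking $V=X\setminus A$ in the same subspace formula applied inside $(B,\tau/B)$, one sees that the degree of pre-closedness of $B\cap A$ in the subspace $(B,\tau/B)$ is at least $\tau_P(X\setminus A)=F_P(A)$, i.e. $[B\cap A\in F_{P}^{B}]\ge F_P(A)$.

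With these in hand, applying Theorem 7 of [1] (quoted in the paper as $\vDash\Gamma_{P}(X,\tau)\otimes A\in F_{P}\to\Gamma_{P}(A)$) inside the subspace $(B,\tau/B)$ gives
$$\Gamma_{P}(B\cap A,\tau/(B\cap A))\;\ge\;\Gamma_{P}(B,\tau/B)\otimes[B\cap A\in F_{P}^{B}]\;\ge\;\Gamma_{P}(B,\tau/B)\otimes F_{P}(A).$$
Combining this with the pre-interior inequality via the monotonicity of $\otimes$ yields, for every $x\in A$ and every $B\subseteq X$,
$$[x\in Int_{P}^{A}(B\cap A)]\otimes\Gamma_{P}(B\cap A,\tau/(B\cap A))\;\ge\;\bigl([x\in Int_{P}(B)]\otimes\Gamma_{P}(B,\tau/B)\bigr)\otimes F_{P}(A).$$

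Finally, I would take the supremum over subsets on the left (restricting to sets of the form $B\cap A$ is a legal choice in $A$), then the infimum over $x\in A$, using $\inf_{x\in A}\ge\inf_{x\in X}$, to obtain $L_{P}C(A,\tau/A)\ge L_{P}C(X,\tau)\otimes F_{P}(A)$, which is the desired implication. The only substantive step is the second subspace inequality together with the appeal to Theorem 7 of [1]; I expect no technical obstacle beyond carefully checking that $\tau_P/A$ as used here is the same object as the $(\tau/A)_P$ implicit in $\Gamma_P(A,\tau/A)$, which is made explicit in the paper's identification $\Gamma_{P}(A,\tau/A):=\Gamma(A,\tau_{P}/A)$.
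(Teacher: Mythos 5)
Your proposal is correct and follows essentially the same route as the paper: fix $x\in A$, take the witness $B$ for $L_PC(X,\tau)$ at $x$, pass to $E=B\cap A$, use $N_x^{P^A}(E)\ge N_x^{P^X}(B)$ and $[E\in F_P/B]\ge F_P(A)$, and invoke Theorem 7 of [1] inside $(B,\tau/B)$. The only point you gesture at rather than carry out --- that $\Gamma_P(E,(\tau/A)/E)=\Gamma_P(E,(\tau/B)/E)$, i.e.\ that the subspace pre-topology on $E$ is the same whether induced through $A$ or through $B$ --- is exactly the verification the paper performs explicitly, so no substantive gap remains.
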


\begin{proof}
We have $$L_{P}C(X,\tau )=\underset{x\in X}{\bigwedge }\ \underset{B\subseteq
X}{\bigvee }\max (0,N_{x}^{P^{X}}(B)+\Gamma _{P}(B,\tau /B)-1)$$ and $$%
L_{P}C(A,\tau /A)=\underset{x\in A}{\bigwedge }\ \underset{G\subseteq A}{%
\bigvee }\max (0,N_{x}^{P^{A}}(G)+\Gamma _{P}(G,(\tau /A)/G)-1).$$ Now,
suppose that $[(X,\tau )\in L_{P}C\otimes A\in F_{P}]>\lambda >0.$ Then for
any $x\in A,$ there exists $B\subseteq X$ such that \newline
\begin{eqnarray} N_{x}^{P^{X}}(B)+\Gamma _{P}(B,\tau /B)+\tau _{P}(X-A)-2>\lambda \label{e1}\end{eqnarray}
\newline
Set $E=A\cap B\in P(A).$ Then $$N_{x}^{P^{A}}(E)=\underset{E=C\cap B}{\bigvee
}N_{x}^{P^{X}}(C)\geq N_{x}^{P^{X}}(B)$$ and for any $U\in P(E)$, we have \begin{eqnarray*}
(\tau _{P}/A)_{P}/E(U)&=&\underset{U=C\cap E}{\bigvee }\tau _{P}/A(C)\\ &=&
\underset{U=C\cap E}{\bigvee }\ \underset{C=D\cap A}{\bigvee }\tau _{P}(D)\\ &=&
\underset{U=D\cap A\cap E}{\bigvee }\tau _{P}(D)=\underset{U=D\cap E}{%
\bigvee }\tau _{P}(D).\end{eqnarray*} Similarly, $$(\tau _{P}/B)_{P}/E(U)=\underset{U=D\cap
E}{\bigvee }\tau _{P}(D).$$ Thus, $(\tau _{P}/B)_{P}/E=(\tau _{P}/A)_{P}/E$
and $\Gamma _{P}(E,(\tau /A)/E)=\Gamma _{P}(E,(\tau /B)/E).$ Furthermore, \begin{eqnarray*}
[E\in F_{P}/B]&=&\tau _{P}/B(B-E)=\tau _{P}/B(B\cap E^{c})\\ &=& \underset{%
B\cap E^{c}=B\cap D}{\bigvee }\tau _{P}(D)\\ &\geq& \tau _{P}(X-A)=F_{P}(A).\end{eqnarray*}
Since $\vDash (X,\tau )\in \Gamma _{P}\otimes A\in F_{P}\rightarrow (A,\tau
/A)\in \Gamma _{P}$ (see [1], Theorem 7], from (\ref{e1}) we have for any $x\in A$
that
\begin{eqnarray*}\underset{G\subseteq A}{\bigvee }\max (0,N_{x}^{P^{A}}(G)+\Gamma
_{P}(G,(\tau /A)/G)-1)&\geq& N_{x}^{P^{A}}(E)+\Gamma _{P}(E,(\tau /A)/E)-1 \\ &=&
N_{x}^{P^{A}}(E)+\Gamma _{P}(E,(\tau /B)/E)-1\\ &\geq&
N_{x}^{P^{X}}(B)+[\Gamma _{P}(B,\tau /B)\otimes E\in F_{P}/B]-1 \\ &\geq& N_{x}^{P^{X}}(B)+\Gamma _{P}(B,\tau /B)+[E\in F_{P}/B]-2\\ &\geq& N_{x}^{P^{X}}(B)+\Gamma _{P}(B,\tau /B)+[A\in
F_{P}]-2 >\lambda .\end{eqnarray*}
Therefore $$L_{P}C(A,\tau /A)=\underset{x\in A}{\bigwedge }\ \underset{%
G\subseteq A}{\bigvee }\max (0,N_{x}^{P^{A}}(G)+\Gamma _{P}(G,(\tau
/A)/G)-1)>\lambda .$$
Hence $[(X,\tau )\in L_{P}C\otimes A\in F_{P}]\leq L_{P}C(A,\tau /A).$
\end{proof}

As a crisp result of the above theorem we have the following corollary.

\begin{corollary}
Let $A$ be a pre-closed subset of locally strong compact space $(X,\tau ).$
Then $A$ with the relative topology $\tau /A$ is locally strong compact.
\end{corollary}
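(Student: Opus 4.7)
The plan is to deduce the corollary directly from Theorem 4.1 by restricting to the two-valued (crisp) case. In an ordinary topological space, every fuzzifying predicate takes only the truth values $0$ and $1$. The hypothesis that $(X,\tau)$ is locally strong compact amounts to $L_{P}C(X,\tau)=1$, and the hypothesis that $A$ is pre-closed amounts to $[A\in F_{P}]=1$.

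I would then apply Theorem 4.1, which provides the inequality $[(X,\tau)\in L_{P}C\otimes A\in F_{P}]\leq L_{P}C(A,\tau/A)$. Evaluating the left hand side using the {\L}ukasiewicz tensor $a\otimes b=\max(0,a+b-1)$ gives $1\otimes 1 = 1$, so Theorem 4.1 forces $L_{P}C(A,\tau/A)\geq 1$, hence $L_{P}C(A,\tau/A)=1$. Under the same two-valued reading, this is exactly the assertion that $(A,\tau/A)$ is locally strong compact.

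There is essentially no obstacle in this derivation, since all the substantive work has already been done inside the proof of Theorem 4.1. That argument established the nontrivial identity $(\tau_{P}/B)_{P}/E=(\tau_{P}/A)_{P}/E$ for the trace $E=A\cap B$, verified that $[E\in F_{P}/B]\geq F_{P}(A)$, and invoked the inheritance result $\vDash (X,\tau)\in\Gamma_{P}\otimes A\in F_{P}\rightarrow (A,\tau/A)\in\Gamma_{P}$ from [1]. The corollary is stated solely to spell out the classical content of the fuzzifying theorem, so my write-up would amount to one short line pointing back at Theorem 4.1 and noting that the {\L}ukasiewicz implication becomes a crisp implication once both premises have truth value $1$.
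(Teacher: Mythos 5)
Your proposal is correct and coincides with the paper's own treatment: the paper offers no separate argument, presenting the corollary simply ``as a crisp result of the above theorem,'' which is exactly your specialization of Theorem 4.1 to the two-valued case where $L_{P}C(X,\tau)=1$ and $[A\in F_{P}]=1$ force $L_{P}C(A,\tau/A)=1$ via the {\L}ukasiewicz tensor. Nothing further is needed.
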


The following theorem is a generalization of the statement "If $X$ is a
pre-Hausdorff topological space and $A$ is a pre-dense locally strong
compact subspace, then $A$ is pre-open", where $A$ is a pre-dense in a
topological space $X$ if and only if the pre-closure of $A$ is $X$.

\begin{theorem}
For any fuzzifying $P$-topological space $(X,\tau )$ and $A\subseteq X,$%
$$\vDash T_{2}^{P}(X,\tau )\otimes L_{P}C(A,\tau /A)\otimes (Cl_{P}(A)\equiv
X)\rightarrow A\in \tau _{P}.$$
\end{theorem}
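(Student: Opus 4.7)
The plan is to bound the truth value $\lambda := [T_2^P(X,\tau) \otimes L_PC(A,\tau/A) \otimes (Cl_P(A) \equiv X)]$ from above by $\tau_P(A)$. Assuming $\lambda > 0$ and writing $a := T_2^P(X,\tau)$, $b := L_PC(A,\tau/A)$, $c := [Cl_P(A) \equiv X]$, the Łukasiewicz tensor gives $a + b + c \geq 2 + \lambda$, so in particular $b \geq \lambda$. Since $\tau_P(A) = \bigwedge_{x \in A} N_x^{P^X}(A)$ (as established inside the proof of Theorem 2.1), it suffices to prove $N_x^{P^X}(A) \geq \lambda$ for every fixed $x \in A$.

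For an arbitrary $\epsilon > 0$, I would first unpack $L_PC(A,\tau/A)$ to extract $B \subseteq A$ with $N_x^{P^A}(B) + \Gamma_P(B,(\tau/A)/B) - 1 > b - \epsilon$, and then unpack $N_x^{P^A}(B)$ to produce a pre-open-like witness $V \subseteq X$ with $x \in V \cap A \subseteq B$ and $\tau_P(V) > N_x^{P^A}(B) - \epsilon$. The identification $(\tau/A)/B = \tau/B$ used in the proof of Theorem 4.1 lets me replace $\Gamma_P(B,(\tau/A)/B)$ by $\Gamma_P(B,\tau/B)$, and Theorem 3.4 then yields $\tau_P(X-B) = F_P(B) \geq a + \Gamma_P(B) - 1$. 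Summing the three estimates gives the crucial inequality
\[
\tau_P(V) + \tau_P(X-B) + c > a + b + c - 2\epsilon \geq 2 + \lambda - 2\epsilon.
\]

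The heart of the argument is to force $V \subseteq A$. To this end, suppose for contradiction that some $y \in V \cap (X-B)$ exists; since $V \cap A \subseteq B$, such $y$ must lie in $X - A$. The fuzzifying $P$-topology hypothesis yields $\tau_P(V \cap (X-B)) \geq \tau_P(V) \wedge \tau_P(X-B)$, whence $N_y^P(X-A) \geq \tau_P(V) \wedge \tau_P(X-B)$. Pre-density, in turn, gives $N_y^P(X-A) = 1 - Cl_P(A)(y) \leq 1 - c$. Thus $\tau_P(V) \wedge \tau_P(X-B) \leq 1 - c$, which together with the trivial $\tau_P(V) \vee \tau_P(X-B) \leq 1$ forces $\tau_P(V) + \tau_P(X-B) + c \leq 2$, contradicting the displayed inequality once $\epsilon < \lambda/2$. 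Therefore $V \subseteq B \subseteq A$, and since $x \in V$ we obtain $N_x^{P^X}(A) \geq \tau_P(V) > b - 2\epsilon \geq \lambda - 2\epsilon$; letting $\epsilon \to 0^{+}$ closes the argument.

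The main obstacle I foresee is the bookkeeping of the Łukasiewicz tensors through three nested approximations — one from unpacking $L_PC$, one from applying Theorem 3.4, and the additive contribution from pre-density — and verifying that the fuzzifying $P$-topology assumption is precisely what is needed so that the lower bound $\tau_P(V) + \tau_P(X-B) + c > 2 + \lambda - 2\epsilon$ just barely exceeds the upper bound $2$ forced by pre-density, yielding the required contradiction.
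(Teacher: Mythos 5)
Your argument is correct and follows essentially the same route as the paper's own proof: extract per‑point witnesses $V$ and $B$ from $L_PC(A,\tau/A)$, and split into the case $V\subseteq B\subseteq A$ (which gives $\tau_P(A)\geq\lambda$ directly) and the case $V\cap(X-B)\neq\emptyset$, which is refuted by combining the pre-closedness of strongly compact subspaces of pre-Hausdorff spaces (the paper's Theorem 3.3, which you cite as 3.4), the $P$-topology intersection property, and the pre-density bound $N_y^{P}(X-A)\leq 1-c$. Your $\epsilon$-bookkeeping and the min/max repackaging of the final contradiction are only cosmetic variations on the paper's $\lambda'$ computation.
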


\begin{proof}
Assume $$[T_{2}^{P}(X,\tau )\otimes L_{P}C(A,\tau /A)\otimes (Cl_{P}(A)\equiv
X)]>\lambda >0.$$ Then
$$L_{P}C(A,\tau /A)>\lambda -[T_{2}^{P}(X,\tau )\otimes (Cl_{P}(A)\equiv
X)]+1=\lambda ^{\prime }>\lambda $$ ,i. e., \newline
$$\underset{x\in A}{\bigwedge }\ \underset{B\subseteq A}{\bigvee }\max
(0,N_{x}^{P^{A}}(B)+\Gamma _{P}(B,(\tau /A)/B)-1)>\lambda ^{\prime }.$$ Thus
for any $x\in A$, there exists $B_{x}\subseteq A$ such that $$%
N_{x}^{P^{A}}(B_{x})+\Gamma _{P}(B_{x},(\tau /A)/B_{x})-1>\lambda ^{\prime
}. $$ i.e., $$\underset{H\cap A=B_{x}}{\bigvee }\ \underset{x\in K\subseteq H}{%
\bigvee }\tau _{P}(K)+\Gamma _{P}(B_{x},(\tau /A)/B_{x})-1>\lambda
^{\prime }.$$ Hence there exists $K_{x}$ such that $$K_{x}\cap A=B_{x},\ \tau
_{P}(K_{x})+\Gamma _{P}(B_{x},(\tau /A)/B_{x})-1>\lambda ^{\prime }.$$
Therefore $\tau _{P}(K_{x})>\lambda ^{\prime }.$\newline
(1) If for any $x\in A$ there exists $K_{x}$ such that $$x\in K_{x}\subseteq
B_{x}\subseteq A,\ {\rm then} \ \underset{x\in A}{\bigcup }K_{x}=A$$ and $$\tau
_{P}(A)=\tau _{P}(\underset{x\in A}{\bigcup }K_{x})\geq \underset{x\in A}{%
\bigwedge }\tau _{P}(K_{x})\geq \lambda ^{\prime }>\lambda .$$
(2) If there exists $x_{\circ }\in A$ such that $$K_{x_{\circ }}\cap
(B_{x_{\circ }}^{c})\neq \phi ,\ \tau _{P}(K_{x_{\circ }}) +\Gamma
_{P}(B_{x_{\circ }},(\tau /A)/B_{x_{\circ }})-1>\lambda ^{\prime }.$$ From
the hypothesis $$[T_{2}^{P}(X,\tau )\otimes L_{P}C(A,\tau /A)\otimes
(Cl_{P}(A)\equiv X)]>\lambda >0,$$ we have $[T_{2}^{P}(X,\tau )\otimes
(Cl_{P}(A)\equiv X)]\neq 0$. So $$\tau _{P}(K_{x_{\circ }})+\Gamma
_{P}(B_{x_{\circ }},(\tau /A)/B_{x_{\circ }})-1+[T_{2}^{P}(X,\tau
)\otimes (Cl_{P}(A)\equiv X)]-1>0.$$ Therefore $$\tau _{P}(K_{x_{\circ }})
+\Gamma _{P}(B_{x_{\circ }},(\tau /A)/B_{x_{\circ }})-1+T_{2}^{P}(X,\tau
)+[(Cl_{P}(A)\equiv X)]-1-1>\lambda .$$ Since \begin{eqnarray*} (\tau _{P}/A)_{P}/B_{x_{\circ
}}(U)&=& \underset{U=C\cap B_{x_{\circ }}}{\bigvee }\tau _{P}/A(C)\\ &=&\underset{%
U=C\cap B_{x_{\circ }}}{\bigvee }\underset{C=D\cap A}{\bigvee }\tau _{P}(D)\\  &=&\underset{U=D\cap B_{x_{\circ }}}{\bigvee }\tau _{P}(D) =\tau
_{P}/B_{x_{\circ }}(U), \end{eqnarray*} $$\Gamma _{P}(B_{x_{\circ }},(\tau /A)/B_{x_{\circ
}})=\Gamma _{P}(B_{x_{\circ }},\tau /B_{x_{\circ }}).$$ From Theorem 3.3 we
have \begin{eqnarray*}\tau _{P}(B_{x_{\circ }}^{c})&\geq& T_{2}^{P}(X,\tau )\otimes \Gamma
_{P}(B_{x_{\circ }},\tau /B_{x_{\circ }})\\ &\geq& T_{2}^{P}(X,\tau )+\Gamma
_{P}(B_{x_{\circ }},\tau /B_{x_{\circ }})-1.\end{eqnarray*} Hence $$\tau _{P}(K_{x_{\circ
}})+\tau _{P}(B_{x_{\circ }}^{c})+[Cl_{P}(A)\equiv X]-2>\lambda .$$ Now, for
any $y\in A^{c}$ we have $$[Cl_{P}(A)\equiv X]=\underset{x\in X}{\bigwedge }%
(1-N_{x}^{P^{X}}(A^{c}))\leq 1-N_{y}^{P^{X}}(A^{c}).$$ Since $(X,\tau )$ is a
fuzzifying $P$-topological space, \begin{eqnarray*} \tau _{P}(K_{x_{\circ }})+\tau
_{P}(B_{x_{\circ }}^{c})-1 &\leq& \tau _{P}(K_{x_{\circ }})\otimes \tau
_{P}(B_{x_{\circ }}^{c}) \\ &\leq& \tau _{P}(K_{x_{\circ }})\wedge \tau
_{P}(B_{x_{\circ }}^{c}) \\ &\leq& \tau _{P}(K_{x_{\circ }}\cap B_{x_{\circ
}}^{c}) \\ &\leq& N_{y}^{P^{X}}(K_{x_{\circ }}\cap B_{x_{\circ }}^{c})\leq
N_{y}^{P^{X}}(A^{c}),\end{eqnarray*} where \begin{eqnarray*}y\in K_{x_{\circ }}\cap B_{x_{\circ
}}^{c}\subseteq H_{x_{\circ }}\cap (H_{x_{\circ }}\cap A)^{c}&=& H_{x_{\circ
}}\cap (H_{x_{\circ }}^{c}\cup A^{c})\\ &=& H_{x_{\circ }}\cap A^{c}\subseteq
A^{c}.\end{eqnarray*} Therefore \begin{eqnarray*} 0<\lambda <\tau _{P}(K_{x_{\circ }})+\tau
_{P}(B_{x_{\circ }}^{c})+[Cl_{P}(A)\equiv X]-2&=& \tau _{P}(K_{x_{\circ
}})+\tau _{P}(B_{x_{\circ }}^{c})-1+[Cl_{P}(A)\equiv X]-1 \\ &\leq&
N_{y}^{P^{X}}(A^{c})+1-N_{y}^{P^{X}}(A^{c})-1=0,\end{eqnarray*} a contradiction. So, case
(2) does not hold. We complete the proof.
\end{proof}
\begin{theorem}
For any fuzzifying $P$-topological space $(X,\tau ),$\newline
$$\vDash T_{2}^{P}(X,\tau )\otimes (L_{P}C(X,\tau ))^{2}\rightarrow \forall
x\forall U(U\in N_{x}^{P^{X}}\rightarrow \exists V(V\in N_{x}^{P^{X}}\wedge Cl_{P}(V)\subseteq U\wedge \Gamma _{P}(V))),$$
where $(L_{P}C(X,\tau ))^{2}:=L_{P}C(X,\tau )\otimes L_{P}C(X,\tau ).$
\end{theorem}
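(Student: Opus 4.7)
The plan is to mimic the classical topology proof: extract a strongly compact pre-neighborhood $B$ of $x$ from $L_PC$, observe that $(B,\tau/B)$ is pre-Hausdorff (via Lemma 3.3) and strongly compact, hence pre-regular by Lemma 3.5; then use pre-regularity inside $B$ to shrink $U\cap B$ down to a pre-open $V'\subseteq B$ whose $B$-pre-closure lies inside $U\cap B$. The desired witness will be $V:=Cl_P^{B}(V')$, which will be pre-closed in $X$ (because $B$ is pre-closed in $X$ to suitable degree by Theorem 3.3), a pre-neighborhood of $x$ in $X$, and strongly compact by Theorem 7 of [1].

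Operationally, I would assume $[T_2^{P}(X,\tau)\otimes(L_PC(X,\tau))^{2}]>\lambda>0$ and unpack this via the \L ukasiewicz tensor as $T_2^{P}(X,\tau)+2L_PC(X,\tau)-2>\lambda$. Fix $x\in X$ and $U\subseteq X$ with $N_x^{P^{X}}(U)>0$. A first invocation of $L_PC$ at $x$ yields $B\subseteq X$ with $\max(0,N_x^{P^{X}}(B)+\Gamma_P(B,\tau/B)-1)$ arbitrarily close to $L_PC(X,\tau)$. By Lemma 3.3 the subspace $(B,\tau/B)$ is pre-Hausdorff to degree at least $T_2^{P}(X,\tau)$. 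This single pre-Hausdorff degree, combined with the strong compactness of $B$, powers two independent conclusions that each consume one copy of $\Gamma_P(B,\tau/B)$ in the \L ukasiewicz algebra, which is precisely why the hypothesis is $(L_PC)^{2}$ rather than $L_PC$: Theorem 3.3 delivers pre-closedness of $B$ in $X$ to degree $\geq T_2^{P}(X,\tau)+\Gamma_P(B,\tau/B)-1$, while Lemma 3.5 applied inside $(B,\tau/B)$ delivers $T_3^{P}(B,\tau/B)$ to the same degree.

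Pre-regularity inside $B$, applied to $x$ and the pre-neighborhood $U\cap B$ (pre-open in $B$ to suitable degree because $(X,\tau)$ is $P$-topological, so finite intersections of pre-open pre-neighborhoods are pre-open), produces a pre-open $V'\subseteq B$ with $x\in V'\subseteq Cl_P^{B}(V')\subseteq U\cap B$. Setting $V:=Cl_P^{B}(V')$, the three clauses inside the existential are verified as follows: $V$ is a pre-neighborhood of $x$ in $X$ because $V'=W\cap B$ for some $W$ pre-open in $X$, and intersecting $W$ with a pre-open witness $W_0\subseteq B$ of $N_x^{P^{X}}(B)$ produces, by $P$-topologicality, a pre-open subset of $V'\subseteq V$ containing $x$; $Cl_P(V)\subseteq U$ follows because $B$ is pre-closed in $X$ and $V\subseteq B$, whence $Cl_P^{X}(V)=Cl_P^{B}(V)=V\subseteq U$; and $\Gamma_P(V)$ is large because $V$ is a pre-closed subset of the strongly compact subspace $(B,\tau/B)$, so Theorem 7 of [1] gives $\Gamma_P(V)\geq\Gamma_P(B,\tau/B)+\digamma_P^{B}(V)-1$.

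The main obstacle I anticipate is the \L ukasiewicz bookkeeping: I must verify that the sum $T_2^{P}(X,\tau)+2L_PC(X,\tau)-2$ survives all three reductions (hereditary pre-Hausdorffness, pre-closedness of $B$, and pre-regularity of $(B,\tau/B)$) plus the final assembly of the witness without exceeding the available budget. A secondary subtlety is justifying $Cl_P^{X}(V)=Cl_P^{B}(V)$ as fuzzy sets when $V\subseteq B$ and $B$ is pre-closed in the fuzzifying sense, which I would derive from the identity $Cl_P(A)(x)=1-N_x^{P}(X-A)$ together with pre-closedness of $B$, and confirming that $Cl_P^{B}(V')$ is a genuinely $\tau_P/B$-pre-closed crisp set so that Theorem 7 of [1] is applicable.
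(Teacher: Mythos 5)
Your overall strategy is the one the paper uses: extract a strongly compact pre-neighborhood $C$ of $x$ from one copy of $L_{P}C$, transfer pre-Hausdorffness to the subspace by Lemma 3.3, upgrade to pre-regularity of the subspace by Lemma 3.5, use Theorem 3.3 for pre-closedness of the ambient compact piece, and finish with Theorem 7 of [1] to get compactness of a pre-closed witness; your accounting of why the hypothesis must be $(L_{P}C)^{2}$ (one copy of $\Gamma_{P}(C,\tau/C)$ spent on pre-closedness via Theorem 3.3, one on pre-regularity via Lemma 3.5) is exactly right and matches the paper's bookkeeping.

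However, there is a genuine gap in your construction of the witness. In fuzzifying topology $Cl_{P}(A)$ is a \emph{fuzzy} subset, $Cl_{P}(A)(x)=1-N_{x}^{P}(X-A)\in[0,1]$, whereas the existential quantifier in the theorem ranges over crisp sets $V\subseteq X$ (one must evaluate $N_{x}^{P^{X}}(V)$ and $\Gamma_{P}(V,\tau/V)$ on a crisp $V$). So $V:=Cl_{P}^{B}(V')$ is not a well-formed witness, and the crisp chain $x\in V'\subseteq Cl_{P}^{B}(V')\subseteq U\cap B$ is not what $T_{3}^{P}$ delivers --- pre-regularity here only bounds the degree $\bigvee_{G\subseteq W}\bigl(N_{x}^{P^{C}}(G)\wedge\bigwedge_{y\in C-W}N_{y}^{P^{C}}(C-G)\bigr)$ from below. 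The paper closes this gap by staying crisp throughout: from the $T_{3}^{P}$ degree it extracts a crisp pre-open $G$ with $x\in G\subseteq W\subseteq C\cap U$, then for each $y\in C-W$ a crisp set $B_{y}^{c}$ with $y\in B_{y}^{c}\subseteq G^{c}$ and $\tau_{P}(B_{y}^{c})>\lambda'$, sets $B^{c}=\bigcup_{y}B_{y}^{c}$ and takes the crisp witness $V=B\cap C$. The clause $Cl_{P}(V)\subseteq U$ is then verified not as an identity $Cl_{P}(V)=V$ (which would require $\digamma_{P}(V)=1$) but as the fuzzy inequality $\bigwedge_{y\in U^{c}}N_{y}^{P^{X}}(V^{c})\geq\bigwedge_{y\in V^{c}}N_{y}^{P^{X}}(V^{c})=\tau_{P}(V^{c})\geq\lambda$, where $\tau_{P}(V^{c})=\tau_{P}(B^{c}\cup C^{c})$ is controlled using $P$-topologicality together with $\tau_{P}(C^{c})\geq T_{2}^{P}(X,\tau)\otimes\Gamma_{P}(C,\tau/C)$ from Theorem 3.3. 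You would need to replace your closure-based witness with such an explicitly constructed crisp set for the argument to go through.
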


\begin{proof}
We need to show that for any $x$ and $U$, $x\in U$, \newline
$$T_{2}^{P}(X,\tau )\otimes (L_{P}C(X,\tau ))^{2}\otimes N_{x}^{P^{X}}(U)\leq
\underset{V\subseteq X}{\bigvee }(N_{x}^{P^{X}}(V)\wedge \underset{y\in
U^{c}}{\bigwedge }N_{x}^{P^{X}}(V^{c})\wedge \Gamma _{P}(V,\tau /V)).$$
Assume that $T_{2}^{P}(X,\tau )\otimes (L_{P}C(X,\tau ))^{2}\otimes
N_{x}^{P^{X}}(U)>\lambda >0.$ Then for any $x\in X$ there exists $C$ such
that \newline
\begin{eqnarray}T_{2}^{P}(X,\tau )+N_{x}^{P^{X}}(C)+(L_{P}C(X,\tau
))^{2}+N_{x}^{P^{X}}(U)-3>\lambda .\label{eq1} \end{eqnarray}
Since $(X,\tau )$ is fuzzifying $P$-topological space, \begin{eqnarray*}
N_{x}^{P^{X}}(C)+N_{x}^{P^{X}}(U)-1\leq N_{x}^{P^{X}}(C)\otimes
N_{x}^{P^{X}}(U) &\leq& N_{x}^{P^{X}}(C)\wedge N_{x}^{P^{X}}(U)\\ &\leq&
N_{x}^{P^{X}}(C\cap U)=\underset{x\in W\subseteq C\cap U}{\bigvee }\tau
_{P}(W).\end{eqnarray*} Therefore there exists $W$ such that $x\in W\subseteq C\cap U,$
and $T_{2}^{P}(X,\tau )+(L_{P}C(X,\tau ))^{2}+\tau _{P}(W)-2>\lambda .$ By
Lemmas 3.3 and 3.5 we have $T_{2}^{P}(X,\tau )\leq T_{2}^{P}(C,\tau /C)$ and
$$T_{2}^{P}(C,\tau /C)+\Gamma _{P}(C,\tau /C)-1\leq T_{2}^{P}(C,\tau
/C)\otimes \Gamma _{P}(C,\tau /C)\leq T_{3}^{P}(C,\tau /C).$$ Thus $%
T_{3}^{P}(X,\tau )+\Gamma _{P}(C,\tau /C)+\tau _{P}(W)-2>\lambda .$ Since
for any $x\in W\subseteq U,$ we have $$T_{3}^{P}(C,\tau /C)\leq 1-\tau
_{P}/C(W)+\underset{G\subseteq C}{\bigvee }\left( (N_{x}^{P^{C}}(G)\wedge
\underset{y\in C-W}{\bigwedge }N_{y}^{P^{C}}(C-G))\right) ,$$ so there exists
$G,x\in G\subseteq W$ such that \begin{eqnarray*}\ \left( (N_{x}^{P^{C}}(G)\wedge \underset{%
y\in C-W}{\bigwedge }N_{y}^{P^{C}}(C-G))\right) &\geq& T_{3}^{P}(C,\tau
/C)+\tau _{P}/C(W)-1\geq T_{3}^{P}(C,\tau /C)+\tau _{P}(W)-1\end{eqnarray*} and $$\left(
(N_{x}^{P^{C}}(G)\wedge \underset{y\in C-W}{\bigwedge }N_{y}^{P^{C}}(C-G))%
\right) +\Gamma _{P}(C,\tau /C)-1>\lambda .$$ Thus $$N_{x}^{P^{C}}(G)=\underset%
{D\cap C=G}{\bigvee }N_{x}^{P^{X}}(D)=N_{x}^{P^{X}}(G\cup C^{c})>\lambda
^{\prime }=\lambda +1-\Gamma _{P}(C,\tau /C)\geq \lambda .$$ Furthermore, for
any $y\in C-W,$ $$N_{y}^{P^{C}}(C-G)=\underset{D\cap C=C\cap G^{c}}{\bigvee }%
N_{y}^{P^{X}}(G^{c}\cup C^{c})=N_{y}^{P^{X}}(G^{c})>\lambda ^{\prime }$$ and $$%
N_{x}^{P^{X}}(G)=N_{x}^{P^{X}}((G\cup C^{c})\cap C)\geq N_{x}^{P^{X}}(G\cup
C^{c})\wedge N_{x}^{P^{X}}(C)>\lambda ^{\prime }.$$ Since $%
N_{y}^{P^{X}}(G^{c})=\underset{x\in B^{c}\subseteq G^{c}}{\bigvee }\tau
_{P}(B^{c})>\lambda ^{\prime },$ for any $y\in C-W,$ there exists $B_{y}^{c}$
such that $y\in B_{y}^{c}\subseteq G^{c}$ and $\tau _{P}(B_{y}^{c})>\lambda
^{\prime }.$ Set $B^{c}=\underset{y\in C-W}{\bigcup }B_{y}^{c}\ .$ Then $%
C-W\subseteq B^{c}\subseteq G^{c}$ and $$\tau _{P}(B^{c})\geq \underset{y\in
C-W}{\bigwedge }\tau _{P}(B_{y}^{c})\geq \lambda ^{\prime }.$$ Again, set $%
V=B\cap C,$ then $V\subseteq (C-W)^{c}\cap C=(C^{c}\cup W)\cap C=C\cap
W=W\subseteq U\cap C$ and $V^{c}=B^{c}\cup C^{c}.$ Since $(X,\tau )$ is
fuzzifying $P$-topological space, \newline
\begin{eqnarray}N_{x}^{P^{X}}(V)=N_{x}^{P^{X}}(B\cap C)&\geq& N_{x}^{P^{X}}(B)\wedge
N_{x}^{P^{X}}(C) \nonumber \\ &\geq& N_{x}^{P^{X}}(G)\wedge N_{x}^{P^{X}}(C)>\lambda . \label{eq3} \end{eqnarray}

By (\ref{eq3}) and Theorem 3.3, $$\tau _{P}(C^{c})\geq T_{2}^{P}(X,\tau )\otimes
\Gamma _{P}(C,\tau /C) \geq T_{2}^{P}(X,\tau )+\Gamma _{P}(C,\tau
/C)-1\geq \lambda ^{\prime }.$$ So $$\tau _{P}(V^{c})=\tau _{P}(B^{c}\cup
C^{c})\geq \tau _{P}(B^{c})\wedge \tau _{P}(C^{c})\geq \lambda ^{\prime
}, $$ i.e., $\tau _{P}(V^{c})+\Gamma _{P}(C,\tau /C)-1\geq \lambda $ and \begin{eqnarray}
\Gamma _{P}(V,\tau /V)=\Gamma _{P}(V,(\tau /C)/V)\nonumber \\ &\geq& \tau
_{P}/C(C-V)+\Gamma _{P}(C,\tau /C)-1 \nonumber \\ &\geq& \tau _{P}(V^{c})+\Gamma _{P}(C,\tau /C)-1
\geq \lambda \label{eq4} \end{eqnarray}
Finally, \begin{eqnarray} \underset{y\in U^{c}}{\bigwedge }N_{y}^{P^{X}}(V^{c})\geq \underset%
{y\in V^{c}}{\bigwedge }N_{y}^{P^{X}}(V^{c})=\tau _{P}(V^{c})\geq \lambda  \label{eq5} \end{eqnarray}
Thus by (\ref{eq3}), (\ref{eq4}) and (\ref{eq5}), for any $x\in U$, there exists $V\subseteq U$ such
that $N_{x}^{P^{X}}(V)>\lambda ,$ $\underset{y\in U^{c}}{\bigwedge }%
N_{y}^{P^{X}}(V^{c})\geq \lambda $ and $\Gamma _{P}(V,\tau /V)\geq \lambda .$
So $\underset{V\subseteq X}{\bigvee }(N_{x}^{P^{X}}(V)\wedge \underset{y\in
U^{c}}{\bigwedge }N_{y}^{P^{X}}(V^{c})\wedge \Gamma _{P}(V,\tau /V))\geq
\lambda .$
\end{proof}
\begin{theorem}
For any fuzzifying $P$-topological space $(X,\tau ),$
$$\vDash T_{2}^{P}(X,\tau )\otimes (L_{P}C(X,\tau ))^{2}\rightarrow
T_{3}^{P}(X,\tau ).$$
\end{theorem}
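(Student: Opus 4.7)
The plan is to derive Theorem 4.3 as an almost immediate consequence of Theorem 4.2. That preceding result already shows that under $T_{2}^{P}(X,\tau )\otimes (L_{P}C(X,\tau ))^{2}$, every pre-neighborhood $U$ of any point $x$ contains a pre-neighborhood $V$ of $x$ whose pre-closure is contained in $U$ and which is moreover strongly compact in the relative topology. Since pre-regularity $T_{3}^{P}$ is essentially just the first half of this statement (point separated from the complement of a neighborhood by a ``closed neighborhood''), the proof should reduce to dropping the $\Gamma _{P}(V)$ conjunct from the conclusion of Theorem 4.2.

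Concretely, I would first reformulate $T_{3}^{P}$ in the pre-neighborhood form
$$[T_{3}^{P}(X,\tau )]=\underset{x\in X}{\bigwedge }\ \underset{U\in P(X)}{\bigwedge }\min \Bigl(1,\,1-N_{x}^{P^{X}}(U)+\underset{V\subseteq X}{\bigvee }\bigl(N_{x}^{P^{X}}(V)\wedge \underset{y\in U^{c}}{\bigwedge }N_{y}^{P^{X}}(V^{c})\bigr)\Bigr),$$
which is the fuzzifying analogue of the standard equivalence between ``point $x$ and disjoint pre-closed set $B$ separated by disjoint pre-opens'' and ``every pre-neighborhood of $x$ contains a pre-neighborhood with pre-closure inside it.'' For any fixed $x$ and $U$, Theorem 4.2 supplies the bound
$$T_{2}^{P}(X,\tau )\otimes (L_{P}C(X,\tau ))^{2}\otimes N_{x}^{P^{X}}(U)\leq \underset{V\subseteq X}{\bigvee }\bigl(N_{x}^{P^{X}}(V)\wedge \underset{y\in U^{c}}{\bigwedge }N_{y}^{P^{X}}(V^{c})\wedge \Gamma _{P}(V,\tau /V)\bigr),$$
and erasing the $\Gamma _{P}$ factor on the right only weakens the upper bound. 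Rearranging by means of the {\L}ukasiewicz implication $[P]\leq \lbrack \varphi \rightarrow \psi ]\Leftrightarrow \lbrack P]\otimes \lbrack \varphi ]\leq \lbrack \psi ]$ and taking the meet over $x$ and $U$ then yields $[T_{2}^{P}(X,\tau )\otimes (L_{P}C(X,\tau ))^{2}]\leq \lbrack T_{3}^{P}(X,\tau )]$, which is the desired entailment.

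The main obstacle is the verification that the pre-neighborhood form of $T_{3}^{P}$ used above coincides in value with the definition from [3, Def.~3.1], which is stated in the separation style, parallel to the form of $T_{4}^{P}$ appearing in Lemma 3.3. The passage between the two is standard: given disjoint pre-opens $V,W$ with $x\in V$ and a pre-closed set $B\subseteq W$, one has $Cl_{P}(V)\subseteq W^{c}\subseteq B^{c}$, while conversely, from $V$ pre-open with $x\in V$ and $Cl_{P}(V)\subseteq B^{c}$, the pair $V$ and $(Cl_{P}(V))^{c}$ separates $x$ from $B$. In the fuzzifying setting one must track the membership degrees through this conversion, but the bookkeeping is identical to that used implicitly in Lemma 3.4, where $T_{3}^{P}$ was deduced from $T_{4}^{P}$ by setting $A=\{x\}$. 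With that equivalence in hand, the theorem is essentially a one-line corollary of Theorem 4.2.
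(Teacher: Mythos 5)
Your proposal matches the paper's own proof: the paper likewise deduces this theorem directly from the preceding theorem by discarding the $\Gamma _{P}(V,\tau /V)$ conjunct, dividing out $N_{x}^{P^{X}}(U)$ via the {\L}ukasiewicz implication, and reading off the pre-neighborhood form of $T_{3}^{P}$. You are in fact slightly more careful than the paper, which silently identifies $T_{3}^{P}$ with its pre-neighborhood characterization, whereas you flag that equivalence as the one point needing verification.
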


\begin{proof}
By Theorem 4.3, for any $x\in U,$ we have \newline
$$\underset{x\in V\subseteq U}{\bigvee }(N_{x}^{P^{X}}(V)\wedge \underset{%
y\in U^{c}}{\bigwedge }N_{y}^{P^{X}}(V^{c})\geq \lbrack T_{2}^{P}(X,\tau
)\otimes (\Gamma _{P}(C,\tau /C))^{2}\otimes N_{x}^{P^{X}}(U)].$$
Thus $$1-N_{x}^{P^{X}}(U)+\underset{x\in V\subseteq U}{\bigvee }%
(N_{x}^{P^{X}}(V)\wedge \underset{y\in U^{c}}{\bigwedge }%
N_{y}^{P^{X}}(V^{c})\geq \lbrack T_{2}^{P}(X,\tau )\otimes (\Gamma
_{P}(C,\tau /C))^{2}],$$
i.e., $$[T_{3}^{P}(X,\tau )]\geq \lbrack T_{2}^{P}(X,\tau )\otimes (\Gamma
_{P}(C,\tau /C))^{2}].$$
\end{proof}
\begin{theorem}
For any fuzzifying $P$-topological space $(X,\tau ),$
\begin{eqnarray*} \vDash T_{3}^{P}(X,\tau )\otimes L_{P}C(X,\tau )&\rightarrow& \forall
A\forall U(U\in N_{A}^{P^{X}}\otimes \Gamma _{P}(A,\tau /A)\\ &\rightarrow&  \exists
V(V\subseteq U\wedge U\in N_{A}^{P^{X}}\wedge \tau _{P}(V^{c})\wedge \Gamma
_{P}(V,\tau /V))),\end{eqnarray*}
where $$U\in N_{A}^{P^{X}}:=(\forall x)(x\in A\wedge U\in N_{x}^{P^{X}}).$$
\end{theorem}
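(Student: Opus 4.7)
The plan is to establish this degree-level implication by a local-then-global construction: at each point of $A$ use $L_{P}C$ together with $T_{3}^{P}$ to produce a pre-compact pre-neighborhood with pre-closure inside $U$, then use $\Gamma_{P}(A,\tau /A)$ to pass to a finite subcover, and finally take the union of the resulting pre-closures. Assume $[T_{3}^{P}(X,\tau )\otimes L_{P}C(X,\tau )] > \lambda > 0$ and, for fixed $A\subseteq X$ and $U\in P(X)$, $[U\in N_{A}^{P^{X}}\otimes \Gamma_{P}(A,\tau /A)] > \mu$. The task is then to exhibit a single $V$ for which $V \subseteq U$, $V \in N_{A}^{P^{X}}$, $\tau_{P}(V^{c})$, and $\Gamma_{P}(V,\tau /V)$ each hold to degree controlled by $\lambda$ and $\mu$ (reading the second conjunct of the consequent as $V\in N_{A}^{P^{X}}$, which is the natural intention).

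First I would exploit $L_{P}C(X,\tau )$: for each $x\in A$, since $U\in N_{x}^{P^{X}}$ is inherited from $U\in N_{A}^{P^{X}}$, local strong compactness supplies $B_{x}\in P(X)$ with $N_{x}^{P^{X}}(B_{x}) \otimes \Gamma_{P}(B_{x},\tau /B_{x})$ large, and the fuzzifying $P$-topology axiom lets me intersect the two pre-neighborhoods $B_{x}$ and $U$ of $x$ without loss, so I may assume $B_{x}\subseteq U$. Second, $T_{3}^{P}(X,\tau )$ shrinks each $B_{x}$ to a pre-open $W_{x}$ with $x\in W_{x}$ and $Cl_{P}(W_{x}) \subseteq B_{x}$; since $Cl_{P}(W_{x})$ is pre-closed inside the pre-compact $B_{x}$, Theorem 7 of [1] (pre-closed subsets of pre-compact spaces are pre-compact) makes $\Gamma_{P}(Cl_{P}(W_{x}),\tau /Cl_{P}(W_{x}))$ correspondingly large. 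Third, $\{W_{x}:x\in A\}$ is a pre-open cover of $A$ of high degree, so $\Gamma_{P}(A,\tau /A)$ yields a finite subcover $W_{x_{1}},\ldots ,W_{x_{n}}$ of $A$.

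I would then set $V := \bigcup_{i=1}^{n} Cl_{P}(W_{x_{i}})$ and verify the four conjuncts: $V\subseteq U$ since each $Cl_{P}(W_{x_{i}})\subseteq B_{x_{i}}\subseteq U$; the pre-open set $\bigcup_{i} W_{x_{i}}$ lies between $A$ and $V$, so $V\in N_{A}^{P^{X}}$; $\tau_{P}(V^{c})$ is large as a finite union of sets of large $\digamma_{P}$-degree (the $P$-topology axiom dualizes via complementation to $\digamma_{P}(E\cup F)\geq \digamma_{P}(E)\wedge \digamma_{P}(F)$); and $\Gamma_{P}(V,\tau /V)$ is large as a finite union of pre-compact subspaces, which can be read directly off the covering definition of $\Gamma_{P}$ by distributing any pre-open cover of $V$ across the pieces $Cl_{P}(W_{x_{i}})$.

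The main obstacle will be the \L{}ukasiewicz $\otimes$-bookkeeping across the three stages. Specifically, translating the fuzzifying content of $\Gamma_{P}(A,\tau /A)$, namely $(\exists \wp)((\wp \leq \{W_{x}\}) \wedge K(\wp ,A)\otimes FF(\wp ))$, into an honest finite index set $\{x_{1},\ldots ,x_{n}\}$ whose closures still cover $A$ with the correct degree requires a $\delta$-cut argument of the kind used in the proof of Theorem 3.1 above. A secondary issue is that the $T_{3}^{P}$-shrinking costs a factor of $1-T_{3}^{P}(X,\tau )$ for each $W_{x}$, so the pointwise degrees must be estimated uniformly in $x$ before combining, after which the ambient hypotheses on $\lambda$ and $\mu$ suffice to bound the four conjuncts of the conclusion from below.
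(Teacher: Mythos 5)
Your overall architecture coincides with the paper's: localize with $L_{P}C$ at each $x\in A$, intersect the resulting pre-compact pre-neighborhood with $U$ via the $P$-topology axiom, shrink with $T_{3}^{P}$, extract a finite subfamily from $\Gamma _{P}(A,\tau /A)$ by a $\delta$-cut on the induced pre-open cover of $A$, and take $V$ to be the union of the corresponding ``closed'' pieces. The one step that would fail as written is your use of the pre-closure operator: in fuzzifying topology $Cl_{P}(W_{x})$ is a \emph{fuzzy} subset of $X$ (defined by $Cl_{P}(W)(y)=1-N_{y}^{P}(X-W)$), so $V:=\bigcup _{i}Cl_{P}(W_{x_{i}})$ is not a crisp set and the conjuncts $\tau _{P}(V^{c})$, $N_{A}^{P^{X}}(V)$ and $\Gamma _{P}(V,\tau /V)$ cannot be evaluated on it; likewise ``$Cl_{P}(W_{x})\subseteq B_{x}$'' is itself only a graded statement. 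The paper resolves this exactly at the point where you invoke $Cl_{P}$: the degree of $T_{3}^{P}$ supplies a crisp $B_{x}\subseteq W$ with $N_{x}^{P^{X}}(B_{x})>\lambda ^{\prime }$ and $N_{y}^{P^{X}}(B_{x}^{c})>\lambda ^{\prime }$ for every $y\in W^{c}$; unpacking the latter gives crisp sets $G_{xy}^{c}$ with $y\in G_{xy}^{c}\subseteq B_{x}^{c}$ and $\tau _{P}(G_{xy}^{c})>\lambda ^{\prime }$, and $G_{x}^{c}:=\bigcup _{y\in W^{c}}G_{xy}^{c}$ yields a crisp $G_{x}$ with $B_{x}\subseteq G_{x}\subseteq W$ and $\tau _{P}(G_{x}^{c})\geq \lambda ^{\prime }$. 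This $G_{x}$ is the crisp surrogate for your $Cl_{P}(W_{x})$, and with $V=\bigcup _{i}G_{x_{i}^{\prime }}$ the estimate $\tau _{P}(V^{c})\geq \bigwedge _{i}\tau _{P}(G_{x_{i}^{\prime }}^{c})$ goes through exactly as you intend.

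The remaining divergence is benign but should be made explicit. For $\Gamma _{P}(V,\tau /V)$ you argue that a finite union of pre-compact pieces (each pre-closed in its own pre-compact neighborhood $B_{x_{i}}$) is pre-compact; this needs the lemma $\Gamma _{P}(E\cup F)\geq \Gamma _{P}(E)\wedge \Gamma _{P}(F)$ with degrees tracked, which the paper never invokes. The paper instead keeps a single ambient pre-compact $C$, shows $V$ is pre-closed in $C$ to degree $\tau _{P}\backslash C(C-V)\geq \tau _{P}(V^{c})\geq \lambda ^{\prime }$, and applies once the theorem that a pre-closed subset of a pre-compact space is pre-compact. If you supply the union lemma your version closes, and it has the mild advantage of not requiring all the pieces to sit inside one $C$ (which in the paper's proof is chosen per point $x$). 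Either way, the uniform-in-$x$ degree estimate you flag at the end is indeed the part that must be done carefully before the finite subcover is extracted.
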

\begin{proof}
We only need to show that for any $A,U\in P(X),$
$$[T_{3}^{P}(X,\tau )\otimes L_{P}C(X,\tau )\otimes \Gamma _{P}(A,\tau
/A)\otimes N_{A}^{P^{X}}(U)]\leq \underset{V\subseteq U}{\bigvee }
(N_{A}^{P^{X}}(V)\wedge \tau _{P}(V^{c})\wedge \Gamma _{P}(V,\tau /V)).$$

Indeed, if $$[T_{3}^{P}(X,\tau )\otimes L_{P}C(X,\tau )\otimes \Gamma
_{P}(A,\tau /A)\otimes N_{A}^{P^{X}}(U)]>\lambda >0,$$ then for any $x\in A$,
there exists $C\in P(X)$ such that $$[T_{3}^{P}(X,\tau )\otimes
N_{x}^{P^{X}}(C)\otimes \Gamma _{P}(C,\tau /C)\otimes \Gamma _{P}(A,\tau
/A)\otimes N_{A}^{P^{X}}(U)]>\lambda .$$

Since $(X,\tau )$ is fuzzifying $P$-topological space,\begin{eqnarray*} \underset{x\in
W\subseteq C\cap U}{\bigvee }\tau _{P}(W)=N_{x}^{P^{X}}(C\cap U) &\geq&
N_{x}^{P^{X}}(C)\wedge N_{x}^{P^{X}}(U) \\ && \geq N_{x}^{P^{X}}(C)\wedge
N_{A}^{P^{X}}(U)\\ && \geq N_{x}^{P^{X}}(C)\otimes N_{A}^{P^{X}}(U).\end{eqnarray*}

Then there
exists $W$ such that $x\in W\subseteq C\cap U,$ and $$[T_{3}^{P}(X,\tau
)\otimes \tau _{P}(W)\otimes \Gamma _{P}(C,\tau /C)\otimes \Gamma
_{P}(A,\tau /A)]>\lambda .$$ Therefore
\begin{eqnarray}[T_{3}^{P}(X,\tau )]+\tau _{P}(W)-1>\lambda +2-\Gamma _{P}(C,\tau
/C)-\Gamma _{P}(A,\tau /A)]=\lambda ^{\prime }\geq \lambda .\end{eqnarray}
Since for any $x\in W,$
$$[T_{3}^{P}(X,\tau )]\leq 1-\tau _{P}(W)+\underset{%
B\subseteq W}{\bigvee }(N_{x}^{P^{X}}(B)\wedge \underset{y\in W^{c}}{%
\bigwedge }N_{y}^{P^{X}}(B^{c})),$$ we have $$\underset{B\subseteq W}{\bigvee }%
(N_{x}^{P^{X}}(B)\wedge \underset{y\in W^{c}}{\bigwedge }%
N_{y}^{P^{X}}(B^{c}))>\lambda ^{\prime }.$$ Thus there exists $B_{x}$ such
that $x\in B_{x}\subseteq W\subseteq C\cap U$ and for any $y\in W^{c},$ we
have $$N_{y}^{P^{X}}(B_{x}^{c})>\lambda ^{\prime }, \ \ N_{x}^{P^{X}}(B_{x})>%
\lambda ^{\prime }.$$ Since $$N_{y}^{P^{X}}(B_{x}^{c})=\underset{x\in
G^{c}\subseteq B_{x}^{c}}{\bigvee }\tau _{P}(G^{c})>\lambda ^{\prime },$$
then for any $y\in W^{c}$, there exists $G_{xy}$ such that $$x\in
G_{xy}^{c}\subseteq B_{x}^{c} \ {\rm and } \ \tau _{P}(G_{xy}^{c})>\lambda ^{\prime
}. $$ Set $$G_{x}^{c}=\underset{y\in W^{c}}{\bigcup }G_{xy}^{c},$$ then $%
W^{c}\subseteq G_{xy}^{c}\subseteq B_{x}^{c}$ and $\tau _{P}(G_{x}^{c})\geq
\underset{y\in W^{c}}{\bigwedge }\tau _{P}(G_{xy}^{c})\geq \lambda ^{\prime
}.$ Since $G_{x}\supseteq B_{x},$ $N_{x}^{P^{X}}(G_{x})\geq
N_{x}^{P^{X}}(B_{x})>\lambda ^{\prime },$ i.e., $\underset{x\in H\subseteq
G_{x}}{\bigvee }\tau _{P}(H)>\lambda ^{\prime }.$ Thus there exists $H_{x}$
such that $x\in H_{x}\subseteq G_{x}$ and $\tau _{P}(H_{x})>\lambda ^{\prime
}.$ Hence for any $x\in A$, there exists $H_{x}$ and $G_{x}$ such that $x\in
H_{x}\subseteq G_{x}\subseteq U,$ $\tau _{P}(H_{x})>\lambda ^{\prime }$ and $%
W\supseteq \underset{x\in A}{\bigcup }G_{x}\supseteq \underset{x\in A}{%
\bigcup }H_{x}\supseteq A.$ We define $\Re \in \Im (P(A))$ as follows:%


$${\Re (D)}=\left\{
\begin{array}{lc}
\underset{H_{x}\cap A=D}{\bigvee }\tau _{P}(H_{x}), & {\rm there\ exists }%
H_{x}\ {\rm such \ that } H_{x}\cap A=D, \\
0, & \ {\rm otherwise}
\end{array}
\right. $$

Let $\Gamma _{P}(A,\tau /A)=\mu >\mu -\epsilon ($ $\epsilon >0).$ Then $%
1-K_{P}(\Re ,A)+\underset{\wp \leq \Re }{\bigvee }[K(\Re ,A)\otimes FF(\wp
)]>\mu -\epsilon ,$ where $$[K(\Re ,A)]=\underset{x\in A}{\bigwedge }\
\underset{x\in B}{\bigvee }\Re (B)=\underset{x\in A}{\bigwedge }\ \underset{%
x\in D}{\bigvee }\Re (D)=\underset{x\in A}{\bigwedge }\ \underset{x\in D}{%
\bigvee }\ \underset{H_{x^{\prime }}\cap A=D}{\bigvee }\tau
_{P}(H_{x^{\prime }})\geq \lambda ^{\prime }$$

and
$[\Re \subseteq \tau _{P}\backslash A]=\underset{B\subseteq X}{\bigwedge }%
\min (1,1-\Re (B)+\tau _{P}\backslash A(B))$ \newline
\ \ \ \ \ \ \ \ \ \ \ \ \ \ \ \ \ \ \ \ $=\underset{B\subseteq X}{\bigwedge }%
\min (1,1-\underset{H_{x}\cap A=B}{\bigvee }\tau _{P}(H_{x})+\underset{H\cap
A=B}{\bigvee }\tau _{P}(H))=1.$ \newline
So, $K_{P}(\Re ,A)=[K(\Re ,A)]\geq \lambda ^{\prime }$. By (*), \newline
$[K(\Re ,A)\otimes FF(\wp )]>\mu -\epsilon -1+K_{P}(\Re ,A)\geq \mu
-\epsilon -1+\lambda ^{\prime }\geq $ $\lambda -\epsilon .$ \newline
Thus $\underset{x\in A}{\bigwedge }\ \underset{x\in E}{\bigvee }\Re
(E)+1-\bigwedge \{\delta :F(\wp _{\delta })\}-1>\lambda -\epsilon ,$ and $%
\underset{x\in A}{\bigwedge }\ \underset{x\in E}{\bigvee }\Re (E)>\lambda
-\epsilon +\bigwedge \{\delta :F(\wp _{\delta })\}.$ \newline
Hence there exists $\beta >0$ such that $F(\wp _{\beta })$ and $\underset{%
x\in A}{\bigwedge }\ \underset{x\in D}{\bigvee }\Re (D)>\lambda -\epsilon
+\beta .$ Therefore for any $x\in A,$ there exists $D_{x}\subseteq A$ such
that $\wp (D_{x})>\lambda -\epsilon +\beta $ and $\underset{x\in A}{\bigcup }%
D_{x}\supseteq A.$ Suitably choose $\epsilon $ such that $\lambda -\epsilon
>0,$ then $\wp (D_{x})>\beta >0.$ Since $\Re (D_{x})\geq \wp (D_{x})>0,$ $%
D_{x}=H_{x^{\prime }}\cap A,$ i.e., $H_{x^{\prime }}\cap A\in \wp _{\beta }.$
By $F(\wp _{\beta })$, so there exists finite $H_{x_{1}^{\prime
}},H_{x_{2}^{\prime }},...,H_{x_{n}^{\prime }}$ such that $%
\bigcup_{i=1}^{n}H_{x_{i}^{\prime }}\supseteq A$ and $%
\bigcup_{i=1}^{n}H_{x_{i}^{\prime }}\subseteq
\bigcup_{i=1}^{n}G_{x_{i}^{\prime }}.$Set $V=\bigcup_{i=1}^{n}G_{x_{i}^{%
\prime }},$ and $V^{c}=\bigcap_{i=1}^{n}G_{x_{i}^{\prime }}^{c},$ $%
A\subseteq V\subseteq U,$ and $\tau _{P}(V^{c})\geq \underset{1\leq i\leq n}{%
\bigwedge }\tau _{P}(G_{x_{i}^{\prime }}^{c})\geq \lambda ^{\prime }>\lambda
.$ Since for any $x\in A,$ $G_{x}\subseteq W\subseteq C\cap U\subseteq C,$
we have $V=\bigcup_{i=1}^{n}G_{x_{i}^{\prime }}\subseteq W\subseteq C.$
Because $\tau _{P}\backslash C(C-V)=\underset{D\cap C=C\cap V^{c}}{\bigvee }%
\tau _{P}(D)\geq \tau _{P}(V^{c})\geq \lambda ^{\prime }.$ Thus by (*), $%
\tau _{P}\backslash C(C-V)+\Gamma _{P}(C,\tau /C)-1>\lambda .$By Theorem 5.1
in [21], $\Gamma _{P}(V,\tau /V)=\Gamma _{P}(V,\tau /C/V)\geq \lbrack \Gamma
_{P}(C,\tau /C)\otimes \tau _{P}\backslash C(C-V)]>\lambda .$\newline

Finally, we have for any $x\in A,$\newline
$N_{x}^{P^{X}}(V)=N_{x}^{P^{X}}(\bigcup_{i=1}^{n}G_{x_{i}^{\prime }})\geq
N_{x}^{P^{X}}(\bigcup_{i=1}^{n}H_{x_{i}^{\prime }})\geq $ $\tau
_{P}(\bigcup_{i=1}^{n}H_{x_{i}^{\prime }})\geq $ $\underset{1\leq i\leq n}{%
\bigwedge }\tau _{P}(H_{x_{i}^{\prime }})\geq $ $\lambda ^{\prime }>\lambda
. $ So $N_{A}^{P^{X}}(V)=\underset{x\in A}{\bigwedge }N_{x}^{P^{X}}(V)\geq
\lambda .$ Therefore $N_{A}^{P^{X}}(V)\wedge \tau _{P}(V^{c})\wedge \Gamma
_{P}(V,\tau /V)\geq \lambda .$ \newline
Thus $\underset{V\subseteq U}{\bigvee }(N_{A}^{P^{X}}(V)\wedge \tau
_{P}(V^{c})\wedge \Gamma _{P}(V,\tau /V))\geq \lambda .$
\end{proof}
\begin{theorem}
Let $(X,\tau )$ and $(Y,\sigma )$\ be two fuzzifying topological space and $%
f\in Y^{X}$ be surjective. Then $\vDash L_{P}C(X,\tau )\otimes
C_{P}(f)\otimes O(f)\rightarrow LC(Y,\sigma ).$ For the definition of O(f),
see [25].
\end{theorem}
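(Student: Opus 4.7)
My plan is to unwind $LC(Y,\sigma)$ pointwise. Fix $\lambda$ with $\lambda<[L_{P}C(X,\tau)\otimes C_{P}(f)\otimes O(f)]$ and aim to produce, for each $y\in Y$, a set $B^{*}\subseteq Y$ satisfying $N_{y}^{Y}(B^{*})+\Gamma(B^{*},\sigma/B^{*})-1>\lambda$. Using surjectivity of $f$, pick $x$ with $f(x)=y$ and apply $L_{P}C$ at $x$ to get $B\subseteq X$ whose value $N_{x}^{P^{X}}(B)+\Gamma_{P}(B,\tau/B)-1$ sits as close to $[L_{P}C(X,\tau)]$ as desired. The natural candidate is $B^{*}=f(B)$, and the argument splits into two transport steps, one for the compactness factor and one for the pre-interior factor, each absorbing a loss bounded by the slack in $C_{P}(f)$ or $O(f)$.

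For the compactness half, the key step is to verify that the restriction $f|_{B}:(B,\tau_{P}/B)\to(f(B),\sigma/f(B))$ has fuzzifying continuity degree at least $[C_{P}(f)]$. Given $V=W\cap f(B)$ with $W\in P(Y)$, one has $(f|_{B})^{-1}(V)=f^{-1}(W)\cap B$, and $C_{P}(f)$ gives $\tau_{P}(f^{-1}(W))\geq\sigma(W)+[C_{P}(f)]-1$; passing to the subspace, this lifts to $\tau_{P}/B((f|_{B})^{-1}(V))\geq\sigma/f(B)(V)+[C_{P}(f)]-1$. Combining this with $\Gamma_{P}(B,\tau/B)=\Gamma(B,\tau_{P}/B)$ and Ying's fuzzifying theorem that continuous images of compact sets are compact (the $C$-analog of Theorem 9 in [1]), I obtain $\Gamma(f(B),\sigma/f(B))\geq\Gamma_{P}(B,\tau/B)\otimes C_{P}(f)$.

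For the interior half, the strategy is to extract from $N_{x}^{P^{X}}(B)$ an honest open neighborhood of $x$ that can be pushed forward by $O(f)$. From $N_{x}^{P^{X}}(B)$ close to its supremum, pick a pre-open $C$ with $x\in C\subseteq B$ and $\tau_{P}(C)$ near $N_{x}^{P^{X}}(B)$; since $\tau_{P}(C)\leq Int(Cl(C))(x)=N_{x}^{X}(Cl(C))$, there is an open $U$ with $x\in U\subseteq Cl(C)$ and $\tau(U)\geq\tau_{P}(C)-\varepsilon$. Then $O(f)$ yields $\sigma(f(U))\geq\tau(U)+[O(f)]-1$ with $y=f(x)\in f(U)$.

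I expect the main obstacle to lie precisely here: the containment $U\subseteq Cl(C)$ does not entail $U\subseteq B$, so $f(U)$ need not sit inside $f(B)$, and thus $\sigma(f(U))$ does not directly bound $N_{y}^{Y}(f(B))$ from below. My planned workaround is to adjust the candidate, either by enlarging $B^{*}$ to $f(Cl(C))$ and verifying its compactness (using that a suitable pre-closure of $C$ inherits pre-strong compactness from $B$ via subspace versions of $\Gamma_{P}$ and the fact that pre-closed subsets of pre-strongly compact sets remain pre-strongly compact, analogous to Theorem 7 of [1]), or by refining the choice of $B$ so that $Cl(C)\subseteq B$ holds. Once this geometric step is settled, combining the interior bound with the compactness bound via $\otimes$ and letting $\varepsilon\to 0$ gives $N_{y}^{Y}(B^{*})+\Gamma(B^{*},\sigma/B^{*})-1>\lambda$; since $y$ and $\lambda$ were arbitrary, $[LC(Y,\sigma)]\geq[L_{P}C(X,\tau)\otimes C_{P}(f)\otimes O(f)]$, finishing the proof.
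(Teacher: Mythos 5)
Your overall skeleton --- transport the compact piece with $C_{P}(f)$ and a ``continuous image of compact is compact'' theorem, transport the neighborhood piece with $O(f)$, and use surjectivity to reach every $y\in Y$ --- is exactly the paper's. Your compactness half is sound: it amounts to re-deriving Theorem 8 of [1], which the paper simply cites to obtain $[\Gamma _{P}(U,\tau /U)\otimes C_{P}(f)]\leq \lbrack \Gamma (f(U),\sigma /f(U))]$.

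The genuine gap is in the neighborhood half, and although you locate it honestly, you do not close it, and neither proposed workaround can. The detour through $Int(Cl(C))$ is the problem. First, in fuzzifying topology $Cl(C)$ is a \emph{fuzzy} subset of $X$ (indeed $Cl(C)(z)=1-N_{z}(X-C)$), so ``an open $U$ with $x\in U\subseteq Cl(C)$'' does not typecheck as a crisp containment and $f(Cl(C))$ is not a legitimate crisp candidate for $B^{*}$. Second, and more fatally, $Cl(C)$ (or $Cl_{P}(C)$) need not be contained in $B$, so the transfer ``pre-closed subsets of pre-strongly compact sets are pre-strongly compact'' (Theorem 7 of [1]) is unavailable --- it requires the pre-closed set to sit \emph{inside} the compact subspace --- and nothing in the hypothesis $L_{P}C$ lets you re-choose $B$ so that $Cl(C)\subseteq B$. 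The paper avoids the detour entirely: from $N_{x}^{P^{X}}(U)=\bigvee_{x\in V^{\prime }\subseteq U}\tau _{P}(V^{\prime })$ it extracts a witness $V^{\prime }$ with $x\in V^{\prime }\subseteq U$ and feeds $V^{\prime }$ itself into the openness predicate, so the containment $f(V^{\prime })\subseteq f(U)$ survives and $N_{f(x)}^{Y}(f(U))\geq \sigma (f(V^{\prime }))\geq \lbrack \tau (V^{\prime })\otimes O(f)]$. (The paper silently switches from $\tau _{P}(V^{\prime })$ to $\tau (V^{\prime })$ at this point, which is presumably the mismatch that pushed you toward $Int(Cl(\cdot ))$; but the intended argument is to apply $O(f)$ directly to the witness, not to manufacture a genuinely $\tau $-open set at the cost of losing $V^{\prime }\subseteq U$.) To repair your write-up: keep $B^{*}=f(U)$, bound $N_{y}^{Y}(f(U))$ below via $\sigma (f(V^{\prime }))$ for the pre-open witness $V^{\prime }\subseteq U$, and bound $\Gamma (f(U),\sigma /f(U))$ by Theorem 8 of [1].
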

\begin{proof}
If $[L_{P}C(X,\tau )\otimes C_{P}(f)\otimes O(f)]>\lambda >0,$ then for any $%
x\in X,$there exists $U\subseteq X,$ such that $[N_{x}^{P^{X}}(U)\otimes
\Gamma _{P}(U,\tau /U)\otimes C_{P}(f)\otimes O(f)]>\lambda .$ Since $%
N_{x}^{P^{X}}(U)=\underset{x\in V\subseteq U}{\bigvee }\tau _{P}(V),$ so
there exists $V^{\prime }\subseteq X$ such that $x\in V^{\prime }\subseteq U$
and $[\tau _{P}(V^{\prime })\otimes \Gamma _{P}(U,\tau /U)\otimes
C_{P}(f)\otimes O(f)]>\lambda .$ By Theorem 8 in [1] , $[\Gamma _{P}(U,\tau
/U)\otimes C_{P}(f)]\leq \lbrack \Gamma (f(U),\sigma /f(U))]$ and \newline
$[\tau (V^{\prime })\otimes O(f)]=\max (0,\tau (V^{\prime })+O(f)-1)$

\ \ \ \ \ \ \ \ \ \ \ \ \ \ \ \ \ $=\max (0,\tau (V^{\prime })+\underset{%
V\subseteq X}{\bigwedge }\min (1,1-\tau (V^{\prime })+\sigma (f(V)))-1)$

\ \ \ \ \ \ \ \ \ \ \ \ \ \ \ \ \ $\leq \max (0,\tau (V^{\prime })+1-\tau
(V^{\prime })+\sigma (f(V))-1)$

\ \ \ \ \ \ \ \ \ \ \ \ \ \ \ \ \ $=\sigma (f(V))\leq
N_{f(x)}^{^{Y}}(f(V^{\prime }))\leq N_{f(x)}^{^{Y}}(f(U)).$ \newline
Since $f$ is surjective, \newline
$LC(Y,\sigma )=LC(f(X),\sigma )=\underset{y\in f(x)\subseteq f(X)}{\bigwedge
}\ \underset{U^{\prime }=f(U)\subseteq f(X)}{\bigvee }[N_{y}^{^{Y}}(U^{%
\prime })\otimes \lbrack \Gamma (U^{\prime },\sigma /U^{\prime })]$

\ \ \ \ \ \ \ \ \ \ \ $\geq \underset{y\in f(x)\subseteq f(X)}{\bigwedge }\
[N_{f(x)}^{^{Y}}(f(U))\otimes \lbrack \Gamma (f(U),\sigma /f(U))]$

\ \ \ \ \ \ \ \ \ \ \ $\geq \underset{y\in f(x)\subseteq f(X)}{\bigwedge }\
[\tau (V^{\prime })\otimes O(f)\otimes \Gamma _{P}(U,\tau /U)\otimes
C_{P}(f)]\geq \lambda .$
\end{proof}

\begin{theorem}
Let $(X,\tau )$ and $(Y,\sigma )$\ be two fuzzifying topological space and $%
f\in Y^{X}$ be surjective. Then $\vDash L_{P}C(X,\tau )\otimes
I_{P}(f)\otimes O_{P}(f)\rightarrow L_{P}C(Y,\sigma ).$
\end{theorem}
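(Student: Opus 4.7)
The plan is to mirror the proof of Theorem 4.5 essentially word for word, upgrading every classical ingredient to its pre-analogue. Assume $[L_PC(X,\tau)\otimes I_P(f)\otimes O_P(f)]>\lambda>0$. Unwinding the definition of $L_PC$, for each $x\in X$ there is a set $U\subseteq X$ with
$$[N_x^{P^X}(U)\otimes \Gamma_P(U,\tau/U)\otimes I_P(f)\otimes O_P(f)]>\lambda,$$
and expanding $N_x^{P^X}(U)=\bigvee_{x\in V\subseteq U}\tau_P(V)$ picks out some $V'$ with $x\in V'\subseteq U$ such that
$$[\tau_P(V')\otimes \Gamma_P(U,\tau/U)\otimes I_P(f)\otimes O_P(f)]>\lambda.$$

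Two key inequalities then finish the argument. First, pre-irresolute maps preserve fuzzifying strong compactness, giving
$$[\Gamma_P(U,\tau/U)\otimes I_P(f)]\leq [\Gamma_P(f(U),\sigma/f(U))].$$
Second, the definition of $O_P$ together with the implication rule recalled at the beginning of the paper yields $\tau_P(V')\otimes O_P(f)\leq \sigma_P(f(V'))$, and since $f(x)\in f(V')\subseteq f(U)$ with $N^{P^Y}$ monotone in its set argument,
$$\tau_P(V')\otimes O_P(f)\leq \sigma_P(f(V'))\leq N_{f(x)}^{P^Y}(f(V'))\leq N_{f(x)}^{P^Y}(f(U)).$$
Combining the two bounds by monotonicity of $\otimes$ one gets
$$\lambda < N_{f(x)}^{P^Y}(f(U))\otimes \Gamma_P(f(U),\sigma/f(U)).$$
Since $f$ is surjective, $y=f(x)$ ranges over all of $Y$ as $x$ ranges over $X$, so taking the supremum over $U'=f(U)\subseteq Y$ and the infimum over $y\in Y$ yields $L_PC(Y,\sigma)\geq \lambda$.

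The only non-routine ingredient is the first inequality, namely that pre-irresolute (surjective) images preserve fuzzifying strong compactness. This should follow immediately from the identifications already recorded in the preliminaries, $\Gamma_P(A,\tau/A)=\Gamma(A,\tau_P/A)$: since $I_P(f)$ is by definition the fuzzifying continuity of $f$ viewed as a map $(X,\tau_P)\to(Y,\sigma_P)$, the desired fact is nothing but the pre-topology incarnation of the standard ``continuous image of compact is compact'' theorem in fuzzifying topology, i.e.\ the pre-version of Theorem 8 in [1]. With this single lemma in hand the remainder is a transcription of Theorem 4.5 in which $C_P$ is replaced by $I_P$, $O$ by $O_P$, $\sigma$ by $\sigma_P$, and $N^Y$ by $N^{P^Y}$ throughout.
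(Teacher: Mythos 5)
Your proposal is correct and follows exactly the route the paper intends: the paper's own proof of this theorem is the single sentence ``By Theorem 9 in [1], the proof is similar to the proof of Theorem 4.6,'' and your write-up is a faithful expansion of that, substituting $I_P$ for $C_P$, $O_P$ for $O$, $\sigma_P$ for $\sigma$, and invoking $\vDash \Gamma _{P}(X,\tau )\otimes I_{P}(f)\rightarrow \Gamma _{P}(f(X))$ (Theorem 9 in [1], already quoted in the proof of Theorem 3.2) in place of Theorem 8 in [1]. The one cosmetic caveat is that this key inequality is best cited as that external theorem rather than rederived from $\Gamma _{P}=\Gamma \circ \tau _{P}$, since $\tau _{P}$ need not itself be a fuzzifying topology.
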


\begin{proof}
By Theorem 9 in [1], the proof is similar to the proof of Theorem 4.6.
\end{proof}

Theorems 4.6 and 4.7 are a generalization of the following corollary.

\begin{corollary}
Let $(X,\tau )$ and $(Y,\sigma )$\ be two topological space and $f:(X,\tau
)\rightarrow (Y,\sigma )$ be surjective mapping. If f is a pre-continuous
(resp. pre-irresolute), open (resp. pre-open) and X is locally strong
compact, then Y is locally compact (resp. locally strong compact) space.
\end{corollary}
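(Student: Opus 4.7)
The plan is to mirror the argument of Theorem 4.6 almost verbatim, swapping in the pre-versions of every ingredient. Specifically, $C_P$ should be replaced by $I_P$, plain openness $O$ by pre-openness $O_P$, fuzzifying compactness $\Gamma$ by fuzzifying strong compactness $\Gamma_P$, and the ordinary neighborhood system $N^Y$ by the pre-neighborhood system $N^{P^Y}$. The target becomes $L_PC(Y,\sigma)$ rather than $LC(Y,\sigma)$.

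First, I would assume $[L_PC(X,\tau) \otimes I_P(f) \otimes O_P(f)] > \lambda > 0$. Unpacking the definition of $L_PC$, for every $x \in X$ there exists $U \subseteq X$ such that
$$[N_x^{P^X}(U) \otimes \Gamma_P(U,\tau/U) \otimes I_P(f) \otimes O_P(f)] > \lambda.$$
Expanding $N_x^{P^X}(U) = \bigvee_{x \in V \subseteq U} \tau_P(V)$, pick $V' \subseteq X$ with $x \in V' \subseteq U$ so that $[\tau_P(V') \otimes \Gamma_P(U,\tau/U) \otimes I_P(f) \otimes O_P(f)] > \lambda$.

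Next I would apply the two key preservation results. For the strong compactness factor, Theorem 9 in [1] (cited in the proof of Theorem 3.2 above as $\vDash \Gamma_P(X,\tau) \otimes I_P(f) \rightarrow \Gamma_P(f(X))$) gives, when applied to the subspace $(U,\tau/U)$, the estimate $[\Gamma_P(U,\tau/U) \otimes I_P(f)] \leq \Gamma_P(f(U),\sigma/f(U))$. For the neighborhood factor, the definition of pre-openness says
$$[\tau_P(V') \otimes O_P(f)] = \max(0,\tau_P(V') + O_P(f) - 1) \leq \sigma_P(f(V')),$$
and since $f(x) \in f(V') \subseteq f(U)$, this is bounded above by $N_{f(x)}^{P^Y}(f(U))$.

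Combining these two inequalities, for every $y = f(x) \in f(X) = Y$ (using surjectivity) we obtain a subset $f(U) \subseteq Y$ with
$$[N_{f(x)}^{P^Y}(f(U)) \otimes \Gamma_P(f(U),\sigma/f(U))] \geq [\tau_P(V') \otimes \Gamma_P(U,\tau/U) \otimes I_P(f) \otimes O_P(f)] > \lambda,$$
which after taking the supremum over subsets of $Y$ and the infimum over $y \in Y$ gives $L_PC(Y,\sigma) \geq \lambda$. Since $\lambda$ was arbitrary below the hypothesis, the implication follows. The only non-routine step is invoking Theorem 9 in [1] for the subspace $(U,\tau/U)$ together with the restricted map $f|_U$, but since the pre-irresoluteness degree of $f|_U$ is at least $I_P(f)$, this reduction is immediate.
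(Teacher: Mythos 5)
Your proposal is correct and follows essentially the same route as the paper: the corollary is just the crisp specialization of Theorems 4.6 and 4.7, and the paper's own proof of Theorem 4.7 is precisely the substitution you describe (``By Theorem 9 in [1], the proof is similar to the proof of Theorem 4.6''), replacing $C_P$, $O$, $\Gamma$, $N^{Y}$ by $I_P$, $O_P$, $\Gamma_P$, $N^{P^{Y}}$ and invoking Theorem 9 of [1] in place of Theorem 8. Note only that your write-up covers the parenthesized (pre-irresolute, pre-open) case; the unparenthesized case is the direct crisp restatement of Theorem 4.6 and needs no new argument.
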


\begin{theorem}
Let $\{(X_{s},\tau _{s}):s\in S\}$ be a family of fuzzifying topological
spaces, then \newline
$\vDash L_{P}C(\underset{s\in S}{\prod }X_{s},\underset{s\in S}{\prod }(\tau
_{P})_{s})\rightarrow \forall s(s\in S\wedge L_{P}C(X_{s},(\tau
_{P})_{s})\wedge $ $\exists T(T\Subset S\wedge \forall t(t\in S-T\wedge
\Gamma _{P}(X_{t},\tau _{t}))).$
\end{theorem}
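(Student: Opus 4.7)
The plan is to decompose the conclusion into its two natural parts and derive each from the single premise $L_PC(\prod_{s\in S}X_s,\prod_{s\in S}(\tau_P)_s)$, using results already established. Specifically, I need to produce (a) $L_PC(X_s,(\tau_P)_s)$ for each $s\in S$, and (b) a finite $T\Subset S$ with $\Gamma_P(X_t,\tau_t)$ for every $t\in S-T$.

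For part (a), each projection $p_s\colon\prod_{t\in S}X_t\to X_s$ is surjective. Lemma 3.2(1) supplies $\vDash O_P(p_s)$, and the pre-irresoluteness degree $I_P(p_s)=1$ is obtained exactly as in the proof of Theorem 3.2, where the inequality $\bigl(\prod_{t\in S}(\tau_P)_t\bigr)(p_s^{-1}(U_s))\ge(\tau_P)_s(U_s)$ is established. Applying Theorem 4.7 to $p_s$ with $X=\prod_tX_t$, $\tau=\prod_t(\tau_P)_t$, $Y=X_s$, $\sigma=(\tau_P)_s$ yields
$$[L_PC(\textstyle\prod_{t\in S}X_t,\prod_{t\in S}(\tau_P)_t)]\otimes I_P(p_s)\otimes O_P(p_s)\le[L_PC(X_s,(\tau_P)_s)];$$
since the second and third factors equal $1$, the premise bounds $[L_PC(X_s,(\tau_P)_s)]$ from below for every $s\in S$.

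For part (b), I unpack the definition of $L_PC$ and use the {\L}ukasiewicz inequality $\max(0,a+b-1)\le a\wedge b$ to estimate, for any fixed $x_0\in\prod_sX_s$,
$$[L_PC(\textstyle\prod_{s}X_s,\prod_{s}(\tau_P)_s)]\le\bigvee_{U}\max\bigl(0,N_{x_0}^{P}(U)+\Gamma_P(U,\tau/U)-1\bigr)\le\bigvee_{U}\Bigl(\Gamma_P(U,\tau/U)\wedge\bigvee_{x}N_{x}^{P}(U)\Bigr).$$
The right-hand side is precisely the quantity bounded in Theorem 3.2, which therefore gives
$$[L_PC(\textstyle\prod_sX_s,\prod_s(\tau_P)_s)]\le\bigvee_{T\Subset S}\bigwedge_{t\in S-T}\Gamma_P(X_t,\tau_t),$$
the desired finite-exceptional clause.

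Combining (a) and (b) establishes the implication. The main technical obstacle will be the bookkeeping of simultaneous degrees in the conclusion: I must confirm that the conjunction of $L_PC(X_s,(\tau_P)_s)$ and the existential $\exists T\Subset S$ holds at a degree at least that of the premise. This follows because both lower bounds (a) and (b) are derived from a single value $[L_PC(\prod_sX_s,\prod_s(\tau_P)_s)]$, with neither step introducing any degree loss (the factors $I_P(p_s)$ and $O_P(p_s)$ are exactly $1$, and the passage from $\max(0,a+b-1)$ to $a\wedge b$ is only used as a monotone estimate). Hence the conjunction in the conclusion is at least as large as the premise, which is exactly what validity in the {\L}ukasiewicz semantics requires.
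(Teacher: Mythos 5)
Your proposal is correct and follows essentially the same route as the paper: the first clause is obtained by applying Theorem 4.7 to the surjective projections $p_s$ (using $O_P(p_s)=I_P(p_s)=1$ from Lemma 3.2 and the proof of Theorem 3.2), and the finite-exceptional clause is obtained by reducing $L_PC$ of the product to the quantity bounded in Theorem 3.2. Your explicit justification of the estimate $\max(0,a+b-1)\le a\wedge b$ in the second step is a slightly more careful rendering of the same inequality the paper uses implicitly.
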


\begin{proof}
It suffices to show that \begin{eqnarray*}
L_{P}C(\underset{s\in S}{\prod }X_{s},\underset{s\in S}{\prod }(\tau
_{P})_{s})\leq \underset{s\in S}{\bigwedge }[L_{P}C(X_{s},(\tau
_{P})_{s})\wedge \underset{T\Subset S}{\bigvee }\ \underset{t\in S-T}{%
\bigwedge }\Gamma _{P}(X_{t},\tau _{t})]. \end{eqnarray*}
From Theorem 4.7 and Lemma 3.1 we have for any $t\in S,$ \newline
$$L_{P}C(\underset{s\in S}{\prod }X_{s},\underset{s\in S}{\prod }(\tau
_{P})_{s})=[L_{P}C(\underset{s\in S}{\prod }X_{s},\underset{s\in S}{\prod }%
(\tau _{P})_{s})\otimes C_{P}(p_{t})\otimes O_{P}(p_{t})]\leq
L_{P}C(X_{t},\tau _{t}).$$
So, $$\ \underset{t\in S-T}{\bigwedge }L_{P}C(X_{t},\tau _{t})\geq L_{P}C(%
\underset{s\in S}{\prod }X_{s},\underset{s\in S}{\prod }(\tau _{P})_{s}).$$

By Theorem 3.2 we have
\begin{eqnarray*}\underset{T\Subset S}{\bigvee }\ \underset{t\in S-T}{\bigwedge }\Gamma
_{P}(X_{t},\tau _{t})&\geq& \lbrack \underset{U\subseteq \underset{s\in S}{%
\prod }X_{s}}{\bigvee }\ \Gamma _{P}(U,\underset{s\in S}{\prod }(\tau
_{P})_{s}\backslash U)\otimes \underset{X\subseteq \underset{s\in S}{\prod }%
X_{s}}{\bigvee }N_{x}^{P^{X}}(U))]\\ &\geq& \underset{U\subseteq
\underset{s\in S}{\prod }X_{s}}{\bigvee }\ \underset{X\subseteq \underset{%
s\in S}{\prod }X_{s}}{\bigvee }[\Gamma _{P}(U,\underset{s\in S}{\prod }(\tau
_{P})_{s}\backslash U)\otimes N_{x}^{P^{X}}(U))]\\ &\geq& \underset{X\subseteq
\underset{s\in S}{\prod }X_{s}}{\bigwedge }\ \underset{U\subseteq \underset{%
s\in S}{\prod }X_{s}}{\bigvee }[\Gamma _{P}(U,\underset{s\in S}{\prod }(\tau
_{P})_{s}\backslash U)\otimes N_{x}^{P^{X}}(U))]\\ &=&L_{P}C(\underset{s\in S}{\prod
}X_{s},\underset{s\in S}{\prod }(\tau _{P})_{s}). \end{eqnarray*}
Therefore $$L_{P}C(\underset{s\in S}{\prod }X_{s},\underset{s\in S}{\prod }%
(\tau _{P})_{s})\leq \lbrack \underset{t\in S-T}{\bigwedge }%
L_{P}C(X_{t},\tau _{t})\wedge \underset{T\Subset S}{\bigvee }\ \underset{%
t\in S-T}{\bigwedge }\Gamma _{P}(X_{t},\tau _{t})].$$
\end{proof}

We can obtain the following corollary in crisp setting.

\begin{corollary}
Let $\{X_{\lambda }:\lambda \in \Lambda \}$ be a family of nonempty
topological spaces. If $\underset{\lambda \in \Lambda }{\prod }X_{\lambda }$
is locally strong compact, then each $X_{\lambda }$ is locally strong
compact \ and all but finitely many $X_{\lambda }$ are strong compact
\end{corollary}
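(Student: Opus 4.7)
The plan is to reduce the conclusion to proving the single inequality
$$L_PC\Bigl(\underset{s\in S}{\prod}X_s,\underset{s\in S}{\prod}(\tau_P)_s\Bigr)\leq \underset{s\in S}{\bigwedge}L_PC(X_s,(\tau_P)_s)\;\wedge\;\underset{T\Subset S}{\bigvee}\;\underset{t\in S-T}{\bigwedge}\Gamma_P(X_t,\tau_t),$$
and to establish each of the two meetands on the right separately, so that taking their meet gives the desired bound.

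For the first meetand, I would exploit that for every $t\in S$ the projection $p_t:\underset{s\in S}{\prod}X_s\to X_t$ is surjective, pre-open, and pre-continuous by Lemma 3.1 (with the same argument yielding $I_P(p_t)=1$ with respect to the product pre-topology). Applying Theorem 4.7 to $f=p_t$ then gives
$$L_PC(X_t,(\tau_P)_t)\geq L_PC\!\Bigl(\underset{s\in S}{\prod}X_s,\underset{s\in S}{\prod}(\tau_P)_s\Bigr)\otimes I_P(p_t)\otimes O_P(p_t)=L_PC\!\Bigl(\underset{s\in S}{\prod}X_s,\underset{s\in S}{\prod}(\tau_P)_s\Bigr),$$
and taking the meet over $t\in S$ preserves this lower bound.

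For the second meetand, I would unfold $L_PC$ of the product according to Definition 4.1 as $\underset{x}{\bigwedge}\underset{U}{\bigvee}\bigl(N_x^{P^X}(U)\otimes\Gamma_P(U,\tau/U)\bigr)$. For any pair $(x,U)$ realizing this expression at a large truth value, $U$ simultaneously satisfies $N_x^{P^X}(U)$ large and $\Gamma_P(U,\tau/U)$ large, which is exactly the hypothesis of Theorem 3.2. That theorem then supplies a finite $T\Subset S$ for which $\underset{t\in S-T}{\bigwedge}\Gamma_P(X_t,\tau_t)$ is at least as large. Moving the $\bigvee_U$ and then the $\bigwedge_x$ through the resulting inequality bounds the left-hand side above by $\underset{T\Subset S}{\bigvee}\underset{t\in S-T}{\bigwedge}\Gamma_P(X_t,\tau_t)$, as required.

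The main obstacle is the bookkeeping of truth values through these nested quantifiers and through the $\otimes$-operation: first, making the pre-irresoluteness of projections tight enough to discharge the hypothesis of Theorem 4.7 on the nose; and second, ensuring that a single witness $(x,U)$ to $L_PC$ of the product uniformly drives Theorem 3.2 without any drop in truth value when one passes from $\otimes$ to $\wedge$. Once these two steps are under control, combining the two meetands via the outer $\wedge$ yields the displayed inequality and hence the theorem.
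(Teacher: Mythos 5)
Your proposal is correct and follows essentially the same route as the paper: the corollary is the crisp specialization of Theorem 4.8, whose proof reduces to exactly the displayed inequality, obtains the first meetand from Lemma 3.1 together with Theorem 4.7 applied to the projections (using $I_P(p_t)=O_P(p_t)=1$), and obtains the second meetand by unfolding $L_PC$ of the product and invoking Theorem 3.2, with the harmless passage from $\otimes$ to $\wedge$ justified by $a\otimes b\leq a\wedge b$.
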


\textbf{Conclusion}: The main contributions of the present work is to
give characterizations of fuzzifying strong compactness. We also define the
concept of locally strong compactness of fuzzifying topological spaces and
study some basic properties of such spaces. We also state some open problems for
future study: \begin{enumerate}
\item[(1)] Is it possible to generalize the results in the present work to lattice-valued logic.
\item[(2)] What is the justification for fuzzifying locally strong compactness in
the setting of $(2, L)$ topologies. In fact in $%
(M,L)$-topologies for more general setting.
\item[(3)] Further, the fuzzifying topological spaces in [18] form a fuzzy category. Perhaps, this will become a motivation for further study of the fuzzy
category.

\end{enumerate}

\end{document}